\newtheorem{corollary}{Corollary}[section]
\newtheorem{theorem}{Theorem}[section]
\newtheorem{lemma}{Lemma}[section]
\newtheorem{proposition}{Proposition}[section]
\newtheorem{example}{Example}[section]
\journal{}
\date{}
\begin{document}

\begin{frontmatter}

\title{Spatial asymptotics for the Feynman-Kac formulas driven by time-dependent and space-fractional rough Gaussian fields with the measure-valued initial data}

\author[label1]{Yangyang Lyu\corref{cor1}}
\address[label1]{School of Mathematics, Minnan Normal University, Zhangzhou 363000, China}

\cortext[cor1]{Correspondence to: School of Mathematics and statistics, Minnan Normal University, Zhangzhou 363000, China.}

\ead{lvyy16@mails.jlu.edu.cn}



\begin{abstract}
We consider the continuous parabolic Anderson model with the Gaussian fields under the measure-valued initial conditions, the covariances of which are nonhomogeneous in time and fractional rough in space. We mainly study the spatial behaviors for the Feynman-Kac formulas in Stratonovich's sense. Benefited from the application of Feynman-Kac formula based on Brownian bridge, the precise spatial asymptotics can be obtained in the broader conditions than before.
\end{abstract}

\begin{keyword}
Spatial asymptotics \sep Parabolic Anderson model \sep Measure-valued initial condition \sep  Rough Gaussian noise \sep Feynman-Kac formula \sep Brownian bridge
\end{keyword}

\end{frontmatter}



\section{Introduction}

\renewcommand\theequation{1.\arabic{equation}}

In this paper, we consider the heat equation
\begin{eqnarray}\label{Para}
\qquad\qquad\frac{\partial u}{\partial t}(t,x)=\frac{1}{2}\triangle u(t,x)+\theta \dot{W}(t,x) u(t,x),\qquad \forall(t,x)\in \mathbb{R}_+\times\mathbb{R}^{d},
\end{eqnarray}
where the parameter $\theta\neq0$ 
 and the $\dot{W}$ is a centered generalized Gaussian field with the covariance
\begin{eqnarray*}
\qquad\qquad\qquad\qquad\mathbb{E}[\dot{W}(t,x)\dot{W}(s,y)]=\gamma_0(t-s)\gamma(x-y),\qquad ~\quad\forall (t,x),(s,y)\in\mathbb{R}_+\times\mathbb{R}^{d}.~
\end{eqnarray*}
Here, the covariances in time and space respectively satisfy the following two conditions:
\begin{enumerate}[(H-1)]                                                        

\item \label{2020-4-29 12:23:47} The positive definite function $\gamma_0$ is nonnegative, and there exists some $\alpha_0\in(0,1)$ 
     such that for all $p\in[1,\frac{1}{\alpha_0})$, it holds that~$\gamma_0\in L_{loc}^p(\mathbb{R})$.

 \item \label{2020-4-29 12:23:35} The $\gamma(x)$ is (generalized) Fourier transform of $q(\xi)$ in the space of tempered distribution $\mathcal{S}'(\mathbb{R}^d)$, and the $q(\xi):=C_q\prod_{j=1}^d|\xi_j|^{\alpha_j-1}$ with
  $\alpha_1>1$, $\alpha_2, \cdots, \alpha_d>0$ and $\alpha<2(1-\alpha_0)$, where $C_q>0$, $\alpha:=\alpha_1+\cdots+\alpha_d$ and $\alpha_0$ is taken from condition (H-\ref{2020-4-29 12:23:47}).

\end{enumerate}
From the above conditions, it is observed that the $W$ is a fractional Brownian motion in the homogeneous case, and the covariance of Gaussian field $W$ is fractional rough in space ($\alpha_1>1$) and possibly nonhomogeneous in time.

In equation (\ref{Para}),
 let its initial value $u_0(x)$ be a nonnegative and nonrandom Borel measure on $\mathbb{R}^d$, 
 the convolution of which satisfies that \begin{eqnarray}
\qquad\qquad \qquad\qquad 0<p_t\ast u_0(x)<\infty,\qquad\forall (t,x)\in(0,\infty)\times\mathbb{R}^d,\label{2020-9-24 23:33:27}
 \end{eqnarray}
where the heat kernel $p_t(x):=(2\pi t)^{-\frac{d}{2}}\exp\{-\frac{1}
{2t}|x|^2\}$.
We principally consider that the measure $u_0$ satisfies the following two cases:
\begin{enumerate}[(I)]
\item \label{2020-9-7 15:16:57} For all $t>0$, it always holds that
\begin{align}
\lim\limits_{R\rightarrow\infty}\frac{\max\limits_{|x|\le R}|\log p_t\ast u_0(x)|}{(\log R)^{\frac{2}{4-\alpha}}}=0,
\end{align}

\item \label{2020-9-17 13:28:20} For all $t>0$, the $u_0$ satisfies that 
\begin{align}
\lim\limits_{R\rightarrow\infty}\frac{\max\limits_{|x|\ge R}\log p_{t}\ast u_0(x) }{(\log R)^{\frac{2}{4-\alpha}}}=-\infty,\label{2020-9-17 13:34:51}
\end{align}

\end{enumerate}
Here, the above parameter $\alpha$ is from condition (H-\ref{2020-4-29 12:23:35}), and ``$\log$'' is the natural logarithm.

Let $B(s)$ be a $d$-dimensional standard Brownian motion independent of Gaussian field $W$, then for all $t>0$ and $s\in[0,t]$, let $B_{0,t}(s):=B(s)-\frac{s}{t}B(t)$ be a standard Brownian bridge from $0$ time to $t$ time, which is independent of $B(t)$. Moreover, let Brownian bridge from $x$ arrived to $y$
\begin{align*}
\qquad B^{x,y}_{0,t}(s):=B_{0,t}(s)+\frac{s}{t}y+(1-\frac{s}{t})
x, \qquad\forall s\in[0,t].
\end{align*}
Inspired by \cite{XLHJ1,XLHJ2}, we study the Feynman-Kac solution to equation (\ref{Para}), which satisfies that for all $(t,x)\in \mathbb{R}_+\times\mathbb{R}^{d}$,
 \begin{eqnarray}
u_\theta(t,x):=\int_{\mathbb{R}^d}\mathbb{E}_B\exp\bigg\{\theta \int_0^t\dot{W}(t-s,B^{x,y}_{0,t}(s))ds\bigg\} p_t(y-x)u_0(dy)
.\label{2020-8-24 19:05:21}
\end{eqnarray}
Here, $\mathbb{E}_B$ is the expectation with respect to the Brownian motion 
$B(s)$.

For the Feynman-Kac formula \eqref{2020-8-24 19:05:21}, we are principally concerned with its spatial behaviors.
As one of spatial behaviors, spatial asymptotics is to study that the local maximum of $u_\theta(t,x)$ almost surely grows at fixed times as the spatial radii $R\rightarrow\infty$.  Associated with the spatial asymptotics, we want to look for a suitable initial condition such that $\sup\limits_{x\in \mathbb{R}^d}u_\theta(t,x)=\infty$. In addition, the spatial asymptotics is related to intermittent islands. 
In \cite{O9}, it is pointed out that one hand, ``physical intermittency'' should be charactered by using spatial asymptotics besides large-time asymptotics, on the other hand, the global property of the solution heavily and sensitively depend on the initial state of the system.

 Relative to the rich achievements of large-time asymptotics, there are not too much results for spatial asymptotics now.
 Conus, Joseph, Khoshnevisan and Shiu~\cite{O10} prove the spatial asymptotic estimations for semilinear stochastic heat equation with the  time-white Gaussian fields,  especially, use the localization method.
After it, in \cite{S11}, Chen obtain the precise spatial asymptotics for continuous PAM with the Gaussian field, which is homogeneous and non-rough  in time and space. Moreover, when the Gaussian noise is white in time and 1-d rough in space,  Chen, Hu, Nualart and Tindel \cite{S19} obtain a precise result for the model in It\^{o}-Skorokhod integral. Except spatial asymptotics, in recent \cite{FK30}, L. Chen, Khoshnevisan, Nualart and Pu study the behavior of spatial average of the solution.

Besides the settings of noise and integral, we are more interested in how the initial value heavily affects the spatial asymptotic behaviors. According to the existing papers, it is usually assumed that the initial data $u_0(x)$ is a function satisfying that
\begin{eqnarray}
0<\inf_{x\in\mathbb{R}^d}u_0(x)\leq\sup_{x\in\mathbb{R}^d}u_0(x)<\infty,\label{2020-9-19 23:42:05}
\end{eqnarray}
which is a mild condition such that some asymptotic results are not be impacted.
If the $\inf_{x\in\mathbb{R}^d}u_0(x)>0$ is substituted by $u_0(x)\ge0$, 
then the solution $u(t,x)$ may be globally bounded but nonzero, by reference to Foondun and Khoshnevisan \cite{FK30}.
 After it, Huang and L\^{e} \cite{J36} study the case that the initial data is a compact supported measure. When the spatial covariance $\gamma(0)<\infty$, $\alpha=0$ and the measure $u_0$ is the Dirac measure $\delta_0$, they can prove the precise result that
 \begin{align}
\lim\limits_{R\rightarrow\infty}\frac{1}{(\log R)^{\frac{2}{4-\alpha}}}\sup\limits_{|x|\le R}(\log u(t,x)-\log p_t\ast u_0(x))=C_{t,d}. \label{2020-9-18 22:51:31}
\end{align}
In addition, they conjecture that the above is always true for all $\alpha\in(0,2)$ and measure $u_0$.
  In recent \cite{FE28}, Foondun and E. Nualart consider the case that the initial value $u_0(x)$ is a radial and positive function which is monotone decreasing and vanishing at $\infty$.

  To distinguish it, we use $\hat{u}_\theta(t,x)$ instead of $u_\theta(t,x)$ when the initial value $u_0(x)\equiv1$.
  Different from the existing results, we conjecture that the spatial behaviors of $u_\theta(t,x)$ depend on the relation between $\log \hat{u}_\theta(t,x)$ and $\log p_t\ast u_0(x)$ when the $|x|$ is enough large. In addition, we may respectively consider the spatial asymptotic behavior in different states, for example,  
 $\sup\limits_{x\in \mathbb{R}^d}u_\theta(t,x)$ is finite or unbounded.


Before show the main results, we also need to define the variation
\begin{eqnarray}
&&\qquad\mathcal{E}_t(\theta):=\sup\limits_{g\in\mathcal{A}_d}\bigg\{
\theta\int_0^1\int_0^1\int_{\mathrm{R}^{d}}\gamma_0((s-r)t)
\mathcal{F}g^2(s,\cdot)(\xi)\overline{\mathcal{F}g^2(r,\cdot)(\xi)}
q(\xi)d\xi dsdr\nonumber\\
&&\qquad-\frac{1}{2}\int_0^1\int_{\mathbb{R}^d} |\nabla_xg(s,x)|^2dxds\bigg\},\qquad\qquad\qquad\qquad\qquad\forall t,\theta>0,\label{2020-9-30 18:40:12}
\end{eqnarray}
when $\theta=1$, write $\mathcal{E}_t$ instead of $\mathcal{E}_t(1)$ for the simplicity, where the set of functions
\begin{eqnarray*}
&&\mathcal{A}_d:=\bigg\{g(s,x); g(s,\cdot)\in W^{1,2}(\mathbb{R}^d), \int_{\mathbb{R}^d}g^2(s,x)dx=1,\forall s\in [0,1], \nonumber\\
&&\int_0^1\int_{\mathbb{R}^d} |\nabla_xg(s,x)|^2dxds<\infty \bigg\}.
\end{eqnarray*}
\begin{theorem}\label{2020-9-17 13:54:43}
In case (\ref{2020-9-7 15:16:57}), for all $t>0$, $k\in[0,1)$ and $\theta\neq0$, the Feynman-Kac formula $u_\theta(t,x)$ of equation \eqref{Para} satisfies that
\begin{align}
&\lim_{R\rightarrow\infty}\frac{1}{(\log R)^{\frac{2}{4-\alpha}}}\log\max\limits_{kR\le|x|\le R}u_\theta(t,x)\nonumber\\
&= 2^{-\frac{4}{4-\alpha}}|\theta|^{\frac{4}{4-\alpha}}t
\mathcal{E}_t^{\frac{2-\alpha}{4-\alpha}}
(2-\alpha)^{-\frac{2-\alpha}{4-\alpha}}
(4-\alpha) d^{\frac{2}{4-\alpha}},\qquad\mbox{a.s..}
\label{2020-9-19 10:19:15}
\end{align}
\end{theorem}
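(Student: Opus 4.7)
The plan is to establish matching upper and lower bounds for the normalized quantity $(\log R)^{-2/(4-\alpha)}\log\max_{kR\le|x|\le R}u_\theta(t,x)$. Since case (\ref{2020-9-7 15:16:57}) guarantees $|\log p_t\ast u_0(x)|=o((\log R)^{2/(4-\alpha)})$ uniformly on $\{|x|\le R\}$, the measure-valued initial data contributes negligibly to the leading asymptotic, and the main task is to control the random weight
$$\xi(x,y):=\mathbb{E}_B\exp\Big\{\theta\int_0^t\dot W(t-s,B_{0,t}^{x,y}(s))ds\Big\}$$
integrated against $p_t(y-x)u_0(dy)$. The Brownian bridge form of the Feynman-Kac representation makes $\xi(x,y)$ depend on $(x,y)$ only through $y-x$ in law, which removes the location $x$ from the underlying Gaussian chaos and provides the uniform-in-$x$ tail estimates needed below.

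For the upper bound, I would prove a high-moment asymptotic of the form
$$\limsup_{n\to\infty}n^{-(4-\alpha)/(2-\alpha)}\log\mathbb{E}\Big[\big(u_\theta(t,x)/p_t\ast u_0(x)\big)^n\Big]\le c_\ast(\theta,t,\mathcal{E}_t),$$
by expanding the $n$-th power as an expectation over $n$ independent Brownian bridges, exponentiating the quadratic variance of the Gaussian exponent, and recognising the exponential growth rate of the resulting functional through the variational formula $\mathcal{E}_t$. Chebyshev's inequality optimised over $n$ yields a uniform tail bound $P(\log u_\theta(t,x)-\log p_t\ast u_0(x)\ge\lambda)\le\exp(-c\lambda^{(4-\alpha)/2})$ with $c$ computed from $c_\ast$. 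Combining this with a union bound over an $O(R^d)$ net of the annulus $\{kR\le|x|\le R\}$ and a short continuity estimate that controls the oscillation of $u_\theta(t,\cdot)$ inside each cube then produces the upper bound with the precise constant in \eqref{2020-9-19 10:19:15}.

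For the matching lower bound, I plan a localisation argument: choose well-separated points $x_1,\ldots,x_N$ in the annulus with $N\asymp R^d$, truncate each bridge integral to a small neighbourhood of $x_i$ (the complement being controlled by Gaussian tail estimates on the bridge), and use the rapid decay of spatial correlations of $\dot W$ after frequency truncation to show that the resulting truncated versions of $\log u_\theta(t,x_i)$ are asymptotically independent. A small-ball lower bound matching the upper tail, obtained by exhibiting a near-optimiser for $\mathcal{E}_t$ at the right scale, then guarantees that when $\lambda=(K-\varepsilon)(\log R)^{2/(4-\alpha)}$, at least one of the events $\{\log u_\theta(t,x_i)\ge\lambda\}$ occurs with probability tending to $1$, for every $\varepsilon>0$.

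The main obstacle will be making the decoupling step rigorous in the presence of the rough spatial covariance $q(\xi)=C_q\prod_j|\xi_j|^{\alpha_j-1}$ with $\alpha_1>1$, which is singular at the origin and produces long-range correlations of $\dot W$. Controlling these so that widely separated $u_\theta(t,x_i)$ behave independently requires a careful Fourier decomposition of $\dot W$ combined with a low/high-frequency split of the Brownian bridge. A secondary technical point is identifying the constant on the right-hand side of \eqref{2020-9-19 10:19:15} by solving the variational problem for $\mathcal{E}_t$ under the rescaling $g(s,x)\mapsto\beta^{d/2}g(s,\beta x)$ and optimising in $\beta$, which is what produces the precise exponents $4/(4-\alpha)$, $(2-\alpha)/(4-\alpha)$, and $2/(4-\alpha)$ in the final answer.
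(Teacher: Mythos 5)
Your upper-bound plan is essentially the paper's (high-moment asymptotics, Chebyshev optimised over the moment order, a union bound over an $O(R^d)$ mesh plus a H\"older-continuity estimate for the oscillation inside each cell), so that half is fine. The genuine gaps are in the lower bound. First, the decoupling step, which you yourself identify as the main obstacle, is left unresolved: a frequency truncation of $\dot W$ produces a smooth but still long-range correlated covariance, so widely separated values of $u_\theta(t,\cdot)$ do not become independent, and ``rapid decay after frequency truncation'' is not available for $q(\xi)=C_q\prod_j|\xi_j|^{\alpha_j-1}$. The paper's device is a localization in physical space: writing $W=\tilde\gamma\ast W_0$ with $\tilde\gamma=\mathcal{F}q^{1/2}$, it multiplies $\tilde\gamma$ by the compactly supported function $\mathcal{F}\hat l_b$ (a Fej\'er kernel construction), so the modified covariance $\gamma_b$ has compact support; it then proves a quantitative high-moment bound on $u_\theta-u_{\theta,b}$ (Proposition \ref{2019-12-9 21:01:47}) and restricts the Brownian motions by a stopping time, so that the localized functionals at mesh points separated beyond the support radius are \emph{exactly} independent conditionally on the Brownian motions. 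Without some such concrete mechanism your ``asymptotic independence'' claim is not a proof step.

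Second, your route to the lower bound hinges on an unproved per-point lower tail estimate ``matching the upper tail,'' to be obtained by exhibiting a near-optimiser for $\mathcal{E}_t$; this is essentially a large-deviation lower bound for the Feynman--Kac functional with rough, time-inhomogeneous noise, of comparable difficulty to the whole theorem, and it is precisely what the paper's argument is designed to avoid. Instead, the paper conditions on the Brownian motions, observes that the localized quantities $\xi_m(t,z)$ at mesh points are i.i.d.\ centered Gaussians with standard deviation $S_m$, deduces $\max_z\xi_m(t,z)\ge\psi(\log m)^{1/2}S_m$ with overwhelming conditional probability for $\psi$ near $(2d)^{1/2}$, and then evaluates $\mathbb{E}\exp\{\psi(\log m)^{1/2}S_m\}$ through Corollary \ref{2020-6-19 15:33:57} (Varadhan's lemma applied to the moment asymptotics of Proposition \ref{2019-12-9 15:24:16}); that computation, not a rescaling of $g$ in $\mathcal{E}_t$, is what produces the constant $2^{-\frac{4}{4-\alpha}}|\theta|^{\frac{4}{4-\alpha}}t\,\mathcal{E}_t^{\frac{2-\alpha}{4-\alpha}}(2-\alpha)^{-\frac{2-\alpha}{4-\alpha}}(4-\alpha)d^{\frac{2}{4-\alpha}}$. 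Finally, for the lower bound you cannot simply declare the initial measure negligible: the endpoint $y$ is integrated against $u_0(dy)$ jointly with the noise functional, so one needs to decouple them; the paper does this by splitting time at $\rho t$, using reverse H\"older to isolate the piece near the terminal time (negligible by Lemma \ref{2020-9-4 09:23:54}), and on the main piece restricting the Brownian motion to $B_d(0,R)$ so that $p_{(1-\rho)t}\ast u_0(B^x(\rho t))\ge\inf_{|z|\le 2R}p_{(1-\rho)t}\ast u_0(z)$, which case (\ref{2020-9-7 15:16:57}) controls, thereby reducing to the $u_0\equiv1$ case settled in Theorem \ref{2019-12-9 21:10:27}. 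Your proposal contains no counterpart of this reduction.
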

\begin{theorem}\label{2020-9-3 23:30:48}
In case (\ref{2020-9-17 13:28:20}), for all $t>0$, $k\ge1$ and $\theta\neq0$, the Feynman-Kac formula $u_\theta(t,x)$ of equation \eqref{Para} satisfies that
\begin{align}
\lim\limits_{R\rightarrow\infty}\frac{1}{\nu_k(R)}\log \max\limits_{R\le|x|\le k R}u_\theta(t,x)=-1,\qquad\mbox{a.s.,}\label{2020-9-26 15:37:11}
\end{align}
where
$\nu_k(R):=0\vee-\log\max\limits_{R\le|x|\le kR} p_{t}\ast u_0(x)$. Especially, for all $\theta\neq0$, 
it also satisfies that
\begin{align}
\lim\limits_{R\rightarrow\infty}\frac{1}{\nu(R)}\log \max\limits_{|x|\ge  R}u_\theta(t,x)=-1,\qquad\mbox{a.s.,}\label{2020-9-27 11:15:43}
\end{align}
where $\nu(R):=0\vee-\log\max\limits_{|x|\ge R} p_{t}\ast u_0(x)$.
\end{theorem}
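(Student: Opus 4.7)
The plan is to reduce the spatial behavior of $u_\theta$ under case (\ref{2020-9-17 13:28:20}) to a polylogarithmic a.s.\ control on a random multiplicative factor, and then invoke the fast decay of $p_t\ast u_0$. Writing
$$J(t,x):=p_t\ast u_0(x),\qquad V(t,x,y):=\mathbb{E}_B\exp\Bigl\{\theta\int_0^t\dot W(t-s,B^{x,y}_{0,t}(s))\,ds\Bigr\},$$
I factor the Feynman--Kac formula as $u_\theta(t,x)=J(t,x)Y(t,x)$ with $Y(t,x)=\int_{\mathbb{R}^d} V(t,x,y)\mu_{t,x}(dy)$ and $\mu_{t,x}(dy):=p_t(y-x)u_0(dy)/J(t,x)$ a probability measure. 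Case (\ref{2020-9-17 13:28:20}) guarantees $\nu_k(R)\to\infty$ and $\max_{R\le|x|\le kR}\log J(t,x)=-\nu_k(R)$ for all $R$ sufficiently large, so identity \eqref{2020-9-26 15:37:11} reduces to
$$\max_{R\le|x|\le kR}|\log Y(t,x)|=o(\nu_k(R))\quad\text{a.s.}$$
Since case (\ref{2020-9-17 13:28:20}) in fact forces $(\log R)^{2/(4-\alpha)}=o(\nu_k(R))$, it suffices to prove the polylogarithmic bound $\max_{R\le|x|\le kR}|\log Y(t,x)|=O((\log R)^{2/(4-\alpha)})$ a.s.

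Because $\inf_y V\le Y\le\sup_y V$ pointwise in $x$, the task reduces further to a uniform two-sided polylogarithmic estimate for $\log V(t,x,y)$. Translation invariance in $x$ gives $V(t,x,y)\stackrel{d}{=}V(t,0,y-x)$, so the moment/chaining machinery already in place for Theorem \ref{2020-9-17 13:54:43} (in the $u_0\equiv 1$ regime, which is included in case (\ref{2020-9-7 15:16:57})) transfers to the two-parameter random field $(x,y)\mapsto V(t,x,y)$: after truncating $y$ to $|y-x|\le C\sqrt{\log R}$ (the complement contributes super-polynomially little through the Gaussian tail of $p_t$), a lattice discretization in $(x,y)$, a $p$-moment modulus-of-continuity bound, and Borel--Cantelli along $R=2^n$ deliver the upper side $\max\log V\le C(\log R)^{2/(4-\alpha)}$. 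For the lower side, Jensen's inequality applied to the inner $\mathbb{E}_B$ yields
$$\log V(t,x,y)\ge \theta\int_0^t \mathbb{E}_B[\dot W(t-s,B^{x,y}_{0,t}(s))]\,ds,$$
a centered linear Gaussian functional of $\dot W$ with variance bounded uniformly in $(x,y)$; Borell--TIS concentration then produces $-\log V\le C(\log R)^{2/(4-\alpha)}$ a.s.\ uniformly on the relevant $(x,y)$-set.

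To deduce \eqref{2020-9-27 11:15:43} from \eqref{2020-9-26 15:37:11}, I decompose $\{|x|\ge R\}=\bigcup_{n\ge 0}\{2^nR\le|x|<2^{n+1}R\}$, apply \eqref{2020-9-26 15:37:11} with $k=2$ on each shell, and observe that under case (\ref{2020-9-17 13:28:20}) the supremum of $p_t\ast u_0$ over $\{|x|\ge R\}$ is attained (asymptotically) on the innermost shell, so $\nu(R)=\nu_2(R)(1+o(1))$; the shell-wise maxima of $\log u_\theta$ then combine to give the stated limit $-1$.

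\textbf{Main obstacle.} The principal difficulty is the \emph{lower} uniform bound on $\log V(t,x,y)$: the upward tail is essentially inherited from the machinery used for Theorem \ref{2020-9-17 13:54:43}, but the downward tail requires a Gaussian small-ball/concentration estimate that is uniform in the bridge endpoint $y$ and simultaneously accommodates the time-nonhomogeneity of (H-\ref{2020-4-29 12:23:47}) and the spatial roughness $\alpha_1>1$ of (H-\ref{2020-4-29 12:23:35}); once this two-sided control on $\log V$ is in hand, every remaining step is either a direct appeal to Theorem \ref{2020-9-17 13:54:43} or a routine geometric decomposition.
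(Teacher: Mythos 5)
Your reduction has a genuine gap at the truncation step. Under case \eqref{2020-9-17 13:28:20} the prototypical initial data are a Dirac mass or a compactly supported measure, and for $R\le|x|\le kR$ the normalized measure $\mu_{t,x}(dy)=p_t(y-x)u_0(dy)/p_t\ast u_0(x)$ then puts \emph{all} of its mass at distance of order $R$ from $x$: the region $|y-x|\le C\sqrt{\log R}$ carries no mass at all, while the ``complement'' is the whole integral. The claim that the complement is negligible because of the Gaussian tail of $p_t$ confuses absolute and relative size: $u_\theta(t,x)$ itself is super-polynomially small in this regime (of order $e^{-cR^2}$ for $u_0=\delta_0$), so a super-polynomially small error is not an error at all here. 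Without the truncation, your bound $\inf_y V\le Y\le\sup_y V$ forces $y$ to range over the (possibly unbounded) support of $u_0$; the Jensen/Borell--TIS minorant you propose is a non-degenerate Gaussian field in $(x,y)$ whose supremum and infimum over an unbounded $y$-region are a.s.\ $\pm\infty$, so neither side of the polylogarithmic control of $\log V(t,x,y)$ can be obtained this way. In addition, the moment/chaining machinery of the paper (Theorem \ref{2020-9-17 13:54:43} and the estimates behind it) is built for the integrated field $u_\theta(t,x)$, not for the two-endpoint bridge field $V(t,x,y)$, so the ``transfer'' you invoke is itself unproved extra work rather than a citation.

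The paper avoids all of this: it applies H\"{o}lder (resp.\ reverse H\"{o}lder) directly to the Feynman--Kac formula, splitting off $\big(p_t\ast u_0(x)\big)^{1/p}$ (resp.\ $\big(p_t\ast u_0(x)\big)^{p}$) and leaving a factor $\max\log u_{q\theta}(t,\cdot)$ (resp.\ $\max\log u_{-q\theta/p}(t,\cdot)$), still with the measure initial datum $u_0$, which is $O((\log R)^{2/(4-\alpha)})$ a.s.\ by Theorem \ref{2019-12-7 17:08:43} and hence negligible against $\nu_k(R)$; letting $p\to1$ gives \eqref{2020-9-26 15:37:11}. (If you try to repair your argument by applying Jensen in $y$ with exponent $q$, you land exactly on this H\"{o}lder bound.) Your deduction of \eqref{2020-9-27 11:15:43} has a second, smaller gap: the assertion that the supremum of $p_t\ast u_0$ over $\{|x|\ge R\}$ is asymptotically attained on the innermost shell, i.e.\ $\nu(R)=\nu_2(R)(1+o(1))$, is not implied by case \eqref{2020-9-17 13:28:20} (the measure may have far-away atoms so that the supremum sits in a distant shell); the paper instead only uses the trivial inequalities $\nu_{2^{k_0}}(R)\ge\nu(R)$ and $\nu_2(2^{k_0}R)\ge\nu(R)$ for the upper bound and a reverse H\"{o}lder estimate over $\{|x|\ge R\}$ for the lower bound.
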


 Notice that the conditions in Theorem \ref{2020-9-17 13:54:43} are sufficient for
 the precise spatial asymptotics, 
and its initial condition is weaker than condition (\ref{2020-9-19 23:42:05}), such as the initial function $u_0(x)=|\log(1+|x|) |^{1/2}$. In case (\ref{2020-9-7 15:16:57}), $ \max\limits_{R\le|x|\le k R} \log \hat{u}_\theta(t,x)$ plays a more important role than $ \max\limits_{R\le|x|\le k R}\log p_t\ast u_0(x)$.

Under the initial condition in Theorem \ref{2020-9-3 23:30:48}, $\max\limits_{x\in \mathbb{R}^d}u_\theta(t,x)$ is almost surely finite, and $u_\theta(t,x)$ almost surely converges to $0$ as $|x|\rightarrow\infty$. In addition, \eqref{2020-9-26 15:37:11} shows a less rigid rate, 
because the $\nu_k(R)$ may be not monotone. Especially, case \eqref{2020-9-17 13:28:20} include the case of measure with compact support. Compared with Theorem \ref{2020-9-3 23:30:48}, observe that conjecture \eqref{2020-9-18 22:51:31} can not directly show the spatial behaviors. Theorem 1.5 in \cite{FE28} can not explain the cases of measure and non-decreasing function, 
but we can not prove that case \eqref{2020-9-17 13:28:20} covers the condition in \cite{FE28} now.
\begin{example}
We list a special case.
When the initial measure $u_0$ is Dirac measure at $0$ (i.e. $u_0=\delta_0$), the Feynman-Kac solution satisfies that
 \begin{align*}
\frac{u_\theta(t,x)}{p_t(x)}&=\mathbb{E}_B\exp\bigg\{\theta \int_0^t\dot{W}(t-s,B^{x,0}_{0,t}(s))ds\bigg\}\\
&:=\tilde{u}_\theta(t,x)  .
\end{align*}
In fact, the intermittency which  $\tilde{u}_\theta(t,x)$ shows is similar to it of $\hat{u}_\theta(t,x)$.
According to Theorem \ref{2020-9-3 23:30:48},
\begin{align*}
\lim\limits_{R\rightarrow\infty}\frac{1}{R^2}\log \max\limits_{|x|\ge  R}u_\theta(t,x)=-\frac{1}{2t},\qquad\mbox{a.s..}
\end{align*}
\end{example}
  Because of the limit in technique, we only consider two kinds of special relations such that the precise asymptotics can be obtained.
 Besides, the following case \eqref{2020-9-19 20:14:03} and case (B) will be also possibly proved in the future.
\begin{enumerate}[(A)]

\item \label{2020-9-19 20:14:03} For all $t>0$, it holds that
\begin{align*}
\lim\limits_{R\rightarrow\infty}\frac{\max\limits_{|x|\le R}\log p_{t}\ast u_0(x) }{(\log R)^{\frac{2}{4-\alpha}}}=\infty.
\end{align*}


\end{enumerate}

Case \eqref{2020-9-17 13:28:20} and case \eqref{2020-9-19 20:14:03} are two extreme cases, and their results may be formally similar.

\begin{enumerate}[(B)]
\item \label{2020-9-19 22:03:12} For all $t>0$, it holds that
\begin{align*}
0< \limsup\limits_{R\rightarrow\infty}\frac{\max\limits_{|x|\le R}|\log p_{t}\ast u_0(x)| }{(\log R)^{\frac{2}{4-\alpha}}}<\infty.
\end{align*}

\end{enumerate}

This is a middle case which is a lot more complicated than the other cases before it,
 because the asymptotic behavior of $\max\limits_{R\le |x|\le 2R}u_\theta(t,x)$ possibly shows one of many kinds of states, such as $\max\limits_{R\le |x|\le 2R}u_\theta(t,x)$ converges to some constant or $\infty$ as $R\rightarrow\infty$.
In addition, let $\{A_R\}_{R>1}$ be any class of Borel sets satisfying that for all $R$,  
it holds that
$ A_R$ is included in $\{x\in\mathbb{R}^d; R\le|x|\le 2R\}$ and $\max\limits_{x\in A_R}u_\theta(t,x)$ is almost surely equivalent to $\max\limits_{R\le |x|\le 2R}u_\theta(t,x)$.  From the proof in Theorem \ref{2020-9-17 13:54:43}, we find that  the asymptotics of $\max\limits_{x\in A_R}\hat{u}_\theta(t,x) $ is related to the 
variations of Hausdorff measure of $A_R$. So, we conjecture that there exists some $A_R$ such that $\max\limits_{x\in A_R}\hat{u}_\theta(t,x) $ and $\max\limits_{x\in A_R} p_t\ast u_0(x)$ jointly decide the asymptotics of $\max\limits_{R\le |x|\le 2R}u_\theta(t,x)$.

Next, we describe the strategy of proof.  Because we consider that the $\gamma_0$ is nonhomogeneous and the $\gamma$ is a generalized function, the difficulties which we encounter are the proof of the lower bound in Theorem \ref{2020-9-17 13:54:43} and the high moment asymptotics of $u_\theta(t,x)$. Partially inspired by the ``moment comparison method'' in \cite{P16,XLHJ1}, we prove Proposition \ref{2019-12-9 15:24:16} for the nonhomogeneous $\gamma_0$ in order to obtain the high moment asymptotics. The proof of the lower bound can be decomposed into two procedures. First, in Theorem \ref{2019-12-9 21:10:27}, we prove the lower bound when the initial value $u_0\equiv1$. More detailed, we associate the localization method in \cite{O9} with the ``transforming method'' in section 2.1 of \cite{S11} to construct ``localized Feynman-Kac formula'' such that
the condition $\lim\limits_{|x|\rightarrow\infty}\gamma(x)=0$ required in \cite{S11} can be bypassed. Second, by  Theorem \ref{2019-12-9 21:10:27} and the application of Feynman-Kac formula based on Brownian bridge, we can prove the lower bound. 

Organisation of this paper is as follows.
Section \ref{20181212230640} includes the preliminaries and the fundamental lemmas.
Section \ref{2020-5-16 22:13:56} is the upper estimation of the high moment asymptotics. In Section \ref{2020-5-24 12:16:39}, we prove the upper bound of spatial asymptotics under a certain condition.
Section \ref{2019-8-27 21:40:56} is the lower bound of spatial asymptotics when the initial value $u_0\equiv1$. At last, we give the proof of Theorem \ref{2020-9-17 13:54:43} and Theorem \ref{2020-9-3 23:30:48} in Section \ref{2020-9-20 19:35:05}.

We list some notations. Let $(\Omega,\mathscr{F}, \mathbb{P} )$ be the probability space. Set $p\in[1,\infty]$, and denote the Lebesgue space on $(\Omega,\mathscr{F}, \mathbb{P} )$ by $ L^p(\Omega)$. Let $C_0^\infty(\mathbb{R}_+\times\mathbb{R}^d)$ be the space of smooth functions supported on $\mathbb{R}_+\times\mathbb{R}^d$. 
 let $L^p(\mathbb{R}^d)$ and $W^{1,2}(\mathbb{R}^d)$ be Lebesgue space and Sobolev space on $\mathbb{R}^d$, respectively. Let $L_{loc}^p(\mathbb{R})$ be local $L^p$ space. $\mathcal{S}(\mathbb{R}^d)$ is Schwartz space on $\mathbb{R}^d$, and its dual space $\mathcal{S}'(\mathbb{R}^d)$ is the space of tempered distributions. $C_b(\mathbb{R}^d)$ is the space of bounded and continuous functions on $\mathbb{R}^d$.
 $B_d(x,r)$ is  denoted by the open Euclidean ball of radius $r$ centered at $x\in\mathbb{R}^d $.
 $C_{\alpha,\beta,\gamma}$ represent some positive constant which only depends on the parameters $\alpha,\beta$ and $\gamma$.
In our proof, we don't distinguish these positive constants $C$.

\section{Preliminaries\label{20181212230640}}

\setcounter{equation}{0}
\renewcommand\theequation{2.\arabic{equation}}

For all $f\in  \mathcal{S}(\mathbb{R}^d)$, define Fourier transform of $f$ as
\begin{eqnarray*}
\mathcal{F}f(\xi):=\int_{\mathbb{R}^d} e^{i\xi\cdot x}f(x)dx,
\end{eqnarray*}
and the inverse Fourier transform is $\mathcal{F}^{-1}f(\xi)=(2\pi)^{-d}\mathcal{F}f(-\xi)$. By the dual, 
for all $f\in  \mathcal{S}'(\mathbb{R}^d)$, we can define (generalized) Fourier transform of $f$ as
\begin{eqnarray*}
\quad\qquad\qquad\langle\mathcal{F}f, g\rangle=\langle f, \mathcal{F}g\rangle,\qquad\quad\forall g\in  \mathcal{S}(\mathbb{R}^d).
\end{eqnarray*}
Based on it, 
under condition (H-\ref{2020-4-29 12:23:35}), $\gamma=\mathcal{F}\mu$ in $\mathcal{S}'(\mathbb{R}^d)$, where the tempered measure $\mu(d\xi):=q(\xi)d\xi$.
According to Bochner representation  in \cite{GV20}, there exists a tempered measure $\mu_0$ such that $\gamma_0=\mathcal{F}\mu_0$  under condition (H-\ref{2020-4-29 12:23:47}).

Furthermore, we describe the noise $W$ as a centered Gaussian family $\{W(\phi);\phi\in C_0^\infty(\mathbb{R}_+\times\mathbb{R}^d)\}$ on the complete probability space $(\Omega,\mathscr{F}, \mathbb{P} )$, the covariance of which satisfies
\begin{eqnarray}
\quad\mathbb{E}[W(\phi)W(\psi)]
=\int_{\mathbb{R}_+}\int_{\mathbb{R}_+}\int_{\mathbb{R}^d}\mathcal{F}\phi(s,\cdot)(\xi)
\overline{\mathcal{F}\psi(r,\cdot)(\xi)} \gamma_0(s-r)q(\xi)d\xi ds dr,\label{2019-12-9 11:50:33}
\end{eqnarray}
where the $\mathcal{F}$ is Fourier transform on the space variable.
In addition to the notation $W(\phi)$, we also use the integral representation
\begin{eqnarray*}
\qquad\qquad\qquad W(\phi)=\int_{\mathbb{R}_+}\int_{\mathbb{R}^d}\phi(s,x)W(ds,dx),\qquad \forall\phi\in C_0^\infty(\mathbb{R}_+\times\mathbb{R}^d).
\end{eqnarray*}
Furthermore, for all $\varepsilon,\delta>0$, let 
$h_\delta(s):=\frac{1}{\delta}\mathbf{1}_{[0,\delta]}(s) $ and
\begin{eqnarray}
\dot{W}_{\varepsilon,\delta}(t,x):=\int_0^t\int_{\mathbb{R}^d}h_\delta(t-s)p_\varepsilon(x-y)W(ds,dy),
  \end{eqnarray}
then we can supplement the definition of integral in (\ref{2020-8-24 19:05:21}) as
\begin{align}
\int_0^t\dot{W}(t-s,B^{x,y}_{0,t}(s)
)ds
:=\lim\limits_{\varepsilon,\delta\rightarrow0}-L^2
(\Omega)\int_0^t\dot{W}_{\varepsilon,\delta}(t-s,B^{x,y}_{0,t}(s)
)ds.\label{2019323173155}
\end{align}
Conditioning on the Brownian motion $B$, the term in \eqref{2019323173155} is a centered Gaussian process with the conditional variance
\begin{align}
&\int_0^t\int_0^t\gamma_0(s-r)\gamma(B_{0,t}(s)-B_{0,t}(r)+\frac{s-r}{t}(y-x))dsdr\nonumber\\
&:=\lim\limits_{\varepsilon,\delta\rightarrow0}- L^1(\Omega)\int_0^t\int_0^t \gamma_{0,\delta}(s-r)\gamma_{\varepsilon}\Big(B_{0,t}(s)-B_{0,t}(r)+\frac{s-r}{t}(y-x)\Big)dsdr.\label{2019-11-28 15:05:35}
\end{align}
where for all $\delta,\varepsilon>0$, let $\gamma_{0,\delta}:=\gamma_0\ast( h_\delta\ast h_\delta(-\cdot) )$ and $\gamma_\varepsilon(x):=\int_{\mathbb{R}^d} e^{ix\cdot\xi}\exp\{-\varepsilon|\xi|^2\}q(\xi)d\xi$.
Similar to \eqref{2019323173155} and \eqref{2019-11-28 15:05:35}, we also define Gaussian process $\int_0^t\dot{W}(t-s,B^x(s))ds$ and its conditional covariance
$\int_0^t\int_0^t\gamma_0(s-r)
\gamma(B(s)-B(r))dsdr$, where $B^x$ is a $d$-dimensional Brownian motion starting at $x$.

Let $\{B_{j}; j=1,\cdots,N\}$ be a family of $d$-dimensional independent standard Brownian motions.
Based on \eqref{2019323173155}, 
 for all positive integer $N$, the moment representation for (\ref{2020-8-24 19:05:21}) is
\begin{align}
&\mathbb{E}u_\theta^N(t,x)\nonumber\\
&=\int_{\mathrm{R}^{dN}}  \mathbb{E}\exp\bigg\{\frac{\theta^2}{2}\sum\limits_{ j,k=1}^N\int_0^t\int_0^t\gamma_0(s-r)\gamma\Big(B_{j,0,t}(s)-B_{k,0,t}(r)
+\frac{s}{t}y_j-\frac{r}{t}y_k-\frac{s-r}{t}x\Big)drds\bigg\}\nonumber\\
&\cdot
\prod\limits_{j=1}^N p_t(y_j-x)u_0(dy_1)\cdots u_0(dy_N)
,\label{2019-12-12 17:11:16}
\end{align}
where for all $1\le j\le N$ and $s\in[0,t]$, let $B_{j,0,t}(s):=B_j(s)-\frac{s}{t}B_j(t)$.

For equation (\ref{Para}), it is reasonable that we consider the Feynman-Kac solution  \eqref{2020-8-24 19:05:21}.
One hand, when $u_0\in C_b(\mathbb{R}^d)$, by the similar methods in \cite{S14}, the Feynman-Kac formula \eqref{2020-8-24 19:05:21} can be checked as a mild solution to equation (\ref{Para}) in the Stratonovich integral or Young integral. 
On the other hand, by reference to  \cite{S14,F15}, we find that for all $\varepsilon>0$, the Feynman-Kac formula with the initial function $p_\varepsilon\ast u_0(x)$ satisfies that
\begin{align}
u_{\theta,\varepsilon}(t,x)&:=\mathbb{E}_B\bigg[\exp\bigg\{\theta \int_0^t\dot{W}(t-s,B^x(s))ds\bigg\}p_\varepsilon\ast u_0(B^x(t))
\bigg]\nonumber\\
&=\mathbb{E}_B\bigg[\exp\bigg\{\theta \int_0^t\dot{W}\Big(t-s,B_{0,t}(s)+\frac{s}{t}B(t)+x\Big)ds\bigg\}p_\varepsilon\ast u_0(B^x(t))
\bigg].\label{2020-9-25 22:44:36}
\end{align}
When the Feynman-Kac formula \eqref{2020-8-24 19:05:21} is well-defined, notice that
\begin{align}
u_\theta(t,x)=\lim\limits_{\varepsilon\rightarrow0}-L^2
(\Omega)u_{\theta,\varepsilon}(t,x)
.\label{2020-9-26 00:43:32}
\end{align}
Moreover, 
 by the similar methods in \cite{S14}, we can prove that
the $ u_\theta(t,x)$ is a mild solution to equation \eqref{Para} with initial measure $u_0$ in the Stratonovich integral.


To explain that Feynman-Kac representation \eqref{2020-8-24 19:05:21} and \eqref{2019-12-12 17:11:16} are well-defined, we will prove the following proposition \ref{2019-11-28 11:23:56}. Before this, we introduce an exponential integrability lemma, which is borrowed from \cite{R2}.
\begin{lemma}\label{2019-11-28 11:23:56}
For all non-decreasing subadditive process $Z_t$ with continuous path and satisfying $Z_0=0$,  it satisfies the  exponential integrability
\begin{eqnarray*}
\qquad\mathbb{E}\exp\{\theta Z_t\}<\infty,\qquad \forall\theta, t>0.
\end{eqnarray*}
Moreover, for all $\theta>0$, the following limit exists
\begin{align*}
\Psi(\theta):=&\lim\limits_{t\rightarrow\infty}\frac{1}{t}\log\mathbb{E} \exp\{\theta Z_t\},
\end{align*}
 where the function $\Psi(\theta)\in[0,\infty)$.
\end{lemma}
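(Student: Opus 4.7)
The plan is to split the proof into two logically distinct pieces: first establish the exponential integrability $\mathbb{E}\exp\{\theta Z_t\} < \infty$ for every fixed $t, \theta > 0$, and then extract the Lyapunov exponent $\Psi(\theta)$ by a Fekete-type subadditivity argument.

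For the second (easier) piece, I would set $f(t) := \log \mathbb{E}\exp\{\theta Z_t\}$, assuming the first piece has supplied finiteness. The subadditivity of $Z$ means $Z_{t+s} \le Z_t + \tilde Z_s$, where $\tilde Z_s$ is independent of $Z_t$ and equidistributed with $Z_s$. Applying $\exp(\theta \cdot)$, taking expectations, and invoking independence yields $\mathbb{E}\exp\{\theta Z_{t+s}\} \le \mathbb{E}\exp\{\theta Z_t\}\cdot\mathbb{E}\exp\{\theta Z_s\}$, so $f(t+s) \le f(t) + f(s)$. Measurability of $f$ together with Fekete's subadditive lemma then gives $\lim_{t\to\infty} f(t)/t = \inf_{t>0} f(t)/t \in [-\infty,\infty)$. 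The non-negativity $\Psi(\theta) \ge 0$ is immediate from $Z_t \ge Z_0 = 0$ and $\theta > 0$, which forces $f(t) \ge 0$.

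For the first piece, the strategy is to leverage continuity of paths at $t = 0$. Since $Z_0 = 0$ and the paths are continuous, for any prescribed $\lambda > 0$ one can choose a small $\delta > 0$ with $\mathbb{P}(Z_\delta > 1)$ as small as desired. Iterated subadditivity gives $Z_{n\delta} \le \sum_{k=1}^n \tilde Z_{\delta}^{(k)}$ in distribution, with iid copies, which turns the short-time smallness of $Z_\delta$ into a good large-deviation-type tail bound for $Z_{n\delta}$. A standard Chernoff / i.i.d.\ tail argument then produces $\mathbb{E}\exp\{\theta Z_\delta\} < \infty$ for every $\theta$ by balancing $\theta$ against the tail of $\mathbb{P}(Z_\delta > u)$, and subadditivity again propagates finiteness from $\delta$ to arbitrary $t$ by writing $t = n\delta + r$ and bounding $Z_t \le \sum_{k=1}^n \tilde Z_{\delta}^{(k)} + \tilde Z_r$.

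The main obstacle I expect is the short-time tail estimate for $Z_\delta$: one has to turn the qualitative input (continuity at $0$) into a quantitative exponential tail strong enough to kill any $e^{\theta u}$. This is the step in which the specific structure of the process encoded in the reference \cite{R2} really matters, and it is the only point where I would need to do real work rather than routine manipulation — the subadditivity and Fekete parts are essentially automatic once finiteness is in hand.
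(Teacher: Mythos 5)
The paper does not prove this lemma at all: it is quoted verbatim from Chen's monograph \cite{R2} (the subadditivity/exponential-integrability theorem there), so the relevant comparison is with that standard proof. Your second piece (Fekete) matches it and is fine: $f(t)=\log\mathbb{E}\exp\{\theta Z_t\}$ is subadditive by the independent-copy form of subadditivity, monotone in $t$, nonnegative since $Z_t\ge Z_0=0$, so the limit exists and lies in $[0,\infty)$ once finiteness is known.

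The genuine gap is in your first piece, which is the entire content of the lemma. Your plan is to pick $\delta$ small so that $\mathbb{P}(Z_\delta>1)$ is small, use $Z_{n\delta}\le\sum_{k=1}^{n}\tilde Z^{(k)}_{\delta}$ with i.i.d.\ copies, and then get $\mathbb{E}\exp\{\theta Z_\delta\}<\infty$ for every $\theta$ by a ``Chernoff / i.i.d.\ tail argument''. This is circular as stated: a Chernoff bound for the block sum already requires exponential moments of $Z_\delta$, which is exactly what is to be proved, and the block decomposition of $Z_{n\delta}$ says nothing about the upper tail $\mathbb{P}(Z_\delta>u)$ for large $u$ --- smallness of $\mathbb{P}(Z_\delta>1)$ alone is compatible with a polynomial tail for $Z_\delta$, in which case no exponential moment exists. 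The missing idea (and the one \cite{R2} uses, with only the stated hypotheses, no extra structure) is a level-crossing iteration \emph{inside} a single time interval: using monotonicity, path continuity (so the first passage of level $\lambda$ occurs with $Z$ exactly equal to $\lambda$) and the fact that the post-crossing contribution is an independent copy, one gets the geometric tail bound $\mathbb{P}(Z_s\ge n\lambda)\le\big(\mathbb{P}(Z_s\ge\lambda)\big)^{n}$. Given $\theta>0$, continuity at $0$ then lets you choose $s$ so small that $\mathbb{P}(Z_s\ge 1)<e^{-2\theta}$, which makes $\mathbb{E}\exp\{\theta Z_s\}$ finite by summing the geometric tail against $e^{\theta n}$; finiteness for arbitrary $t$ follows from submultiplicativity, $\mathbb{E}\exp\{\theta Z_t\}\le\big(\mathbb{E}\exp\{\theta Z_s\}\big)^{\lceil t/s\rceil}$, and then your Fekete step applies. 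You correctly flagged the short-time tail estimate as the crux, but deferring it to ``the specific structure of the process in \cite{R2}'' leaves the lemma unproved, since the whole point of the statement is that the listed hypotheses alone suffice.
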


 Besides condition (H-\ref{2020-4-29 12:23:47}) and condition (H-\ref{2020-4-29 12:23:35}), we also consider the following condition (C\ref{2020-04-30 12:38:17}), which covers condition (H-\ref{2020-4-29 12:23:35}).
\begin{enumerate}[(C1)]
\item \label{2020-04-30 12:38:17}   Let $\gamma=\mathcal{F}\mu$ in $\mathcal{S}'(\mathbb{R}^d)$, where the tempered measure $\mu$ satisfies that for some $\beta\in[0,1-\alpha_0]$,
\begin{eqnarray*}
\int_{\mathbb{R}^d}\frac{1}{(1+|\xi|^2)^{\beta}}\mu(d\xi)<\infty,
\end{eqnarray*}
where $\alpha_0$ is from condition (H-\ref{2020-4-29 12:23:47}).
\end{enumerate}

\begin{proposition}\label{2020-5-1 20:49:40}
Under condition  (H-\ref{2020-4-29 12:23:47})  and condition (C\ref{2020-04-30 12:38:17}),
the following results hold.
\begin{enumerate}[(1)]
\item The limits in (\ref{2019323173155}) and (\ref{2019-11-28 15:05:35}) exist.

\item For all $\theta\neq0,$ $t>0$, $x\in\mathrm{R}^{d}$ and positive integer $n$, it holds that $\mathbb{E}u^n_\theta(t,x)<\infty.$

\end{enumerate}
\end{proposition}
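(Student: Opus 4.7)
The plan rests on the Fourier--Bochner representation $\gamma=\mathcal{F}\mu$ with $\mu(d\xi)=q(\xi)\,d\xi$ together with the covariance isometry \eqref{2019-12-9 11:50:33}. For part~(1) I would establish that the family $\int_0^t \dot W_{\varepsilon,\delta}(t-s,B^{x,y}_{0,t}(s))\,ds$ is Cauchy in $L^2(\Omega)$, and handle \eqref{2019-11-28 15:05:35} in $L^1(\Omega)$ analogously. After conditioning on $B$ and applying the isometry, the $L^2$-norm of the difference between two approximations reduces by Fubini to a deterministic double integral controlled by the characteristic function of the bridge increment, $\mathbb{E}\exp\bigl(i\xi\cdot(B_{0,t}(s)-B_{0,t}(r))\bigr)=\exp\bigl(-\tfrac12|\xi|^2\sigma_t(s,r)\bigr)$ with $\sigma_t(s,r)=|s-r|(1-|s-r|/t)$. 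Dominated convergence then reduces matters to the finiteness of $\int_0^t\!\int_0^t \gamma_0(s-r)\,\sigma_t(s,r)^{-\alpha/2}\,ds\,dr$; since $\int_{\mathbb{R}^d} e^{-c|\xi|^2}q(\xi)\,d\xi \asymp c^{-\alpha/2}$, this in turn follows from H\"older's inequality together with (H-\ref{2020-4-29 12:23:47}), which furnishes $\gamma_0\in L^{p}_{\mathrm{loc}}$ for every $p<1/\alpha_0$, and the sharpness condition $\alpha<2(1-\alpha_0)$ of (H-\ref{2020-4-29 12:23:35}).

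For part~(2), I would start from the moment representation \eqref{2019-12-12 17:11:16}. Setting $Q_{jk}:=\int_0^t\!\int_0^t\gamma_0(s-r)\gamma\bigl(B_{j,0,t}(s)-B_{k,0,t}(r)+c_{jk}(s,r)\bigr)\,ds\,dr$ and using the factorization $c_{jk}(s,r)=\tfrac{s}{t}(y_j-x)-\tfrac{r}{t}(y_k-x)$, the inner sum admits the positivity decomposition
\[
\sum_{j,k=1}^{n}\gamma\bigl(B_{j,0,t}(s)-B_{k,0,t}(r)+c_{jk}(s,r)\bigr)=\int_{\mathbb{R}^d}\Bigl|\sum_{j=1}^{n}e^{i\xi\cdot(B_{j,0,t}(s)+\frac{s}{t}(y_j-x))}\Bigr|^{2}\mu(d\xi)\ge 0.
\]
Combined with $\gamma_0\ge 0$ from (H-\ref{2020-4-29 12:23:47}), this makes the exponential in \eqref{2019-12-12 17:11:16} a well-defined element of $[1,\infty]$. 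By Jensen's inequality applied to the $n^2$-sum, $\exp\bigl(\tfrac{\theta^2}{2}\sum_{j,k}Q_{jk}\bigr)\le \tfrac{1}{n^2}\sum_{j,k}\exp\bigl(\tfrac{n^2\theta^2}{2}Q_{jk}\bigr)$, so it suffices to prove $\mathbb{E}\exp(c\,Q_{jk})<\infty$ for every $c>0$ and each pair $(j,k)$. For this I would write $B_{j,0,t}(s)=B_j(s)-\tfrac{s}{t}B_j(t)$, condition on the endpoints $B_j(t),B_k(t)$, and apply the exponential integrability Lemma~\ref{2019-11-28 11:23:56} to the continuous, nondecreasing, subadditive process $Z_T=\int_0^T\!\int_0^T\gamma_0(s-r)\gamma(B_j(s)-B_k(r))\,ds\,dr$. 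Integration against $\prod_j p_t(y_j-x)\,u_0(dy_j)$ is then justified using the hypothesis \eqref{2020-9-24 23:33:27}.

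The main obstacle I anticipate is producing the exponential moment bound $\mathbb{E}\exp(c\,Q_{jk})<\infty$ with control that is \emph{uniform} in the drift data $(x,y_j,y_k)$, so that the subsequent $u_0^{\otimes n}$-integration converges. A direct application of Lemma~\ref{2019-11-28 11:23:56} produces a constant whose dependence on the drift must be tracked; the idea would be to absorb the drift into a unimodular Fourier factor $e^{i\xi\cdot d}$, which leaves the underlying subadditive functional $Z_T$ unchanged in distribution because $|e^{i\xi\cdot d}|=1$, thereby yielding a drift-free estimate. The sign condition $\gamma_0\ge 0$ of (H-\ref{2020-4-29 12:23:47}) and the Bochner representation $\gamma=\mathcal{F}\mu$ with positive $\mu$ are both crucial for this drift-removal step.
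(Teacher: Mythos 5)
Your overall architecture overlaps with the paper in places (reducing part (1) to a deterministic Fourier integral, removing the drift by exploiting unimodular factors and positivity as in the paper's claim \eqref{2020-9-3 09:42:37}, and integrating against $u_0^{\otimes n}$ using $p_t\ast u_0(x)<\infty$), but the decisive step of part (2) — the exponential integrability \eqref{2020-8-26 13:29:38} — is not correctly executed. You propose to apply Lemma \ref{2019-11-28 11:23:56} directly to $Z_T=\int_0^T\!\int_0^T\gamma_0(s-r)\gamma(B_j(s)-B_k(r))\,ds\,dr$, but for $j\neq k$ this is a bilinear two-particle functional with no sign structure, and even for $j=k$ the rough covariance $\gamma$ is a sign-indefinite distribution; hence $Z_T$ is neither obviously nondecreasing nor subadditive: the cross blocks $\int_0^T\!\int_T^{T+S}$ need not have a favorable sign, and the post-$T$ block carries the random shift $B_j(T)-B_k(T)$, so it is not an independent copy of $Z_S$. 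The paper instead works with the time-normalized modulus-squared functional $\frac1t\int_{\mathbb{R}^{d+1}}\bigl|\int_0^t e^{i(\eta s+\xi\cdot B(s))}ds\bigr|^2\mu_0(d\eta)\mu(d\xi)$, gets subadditivity from the quadratic structure via Jensen, and passes to the running maximum $\tilde Z_t$ to restore monotonicity before invoking Lemma \ref{2019-11-28 11:23:56}; it reduces $n$ particles to one by Jensen in the Fourier representation ($\theta\mapsto\sqrt n\,\theta$). Your Jensen-over-pairs reduction is a legitimate alternative to that last step, but you would still need something like $2Q_{jk}\le Q_{jj}+Q_{kk}$ in the covariance Hilbert space to handle off-diagonal pairs, and this is absent. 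The bridge is also not handled: conditioning on the endpoints does not convert the bridge functional into the free-Brownian one (the bridge is independent of the endpoint); the paper uses time reversal, Cauchy--Schwarz and a Girsanov bound to dominate the bridge exponential moment by the Brownian one. Finally, your assertion that the drift leaves the functional ``unchanged in distribution because $|e^{i\xi\cdot d}|=1$'' is false as stated; what holds, and what the paper proves, is term-by-term moment domination using $\gamma_0\ge0$, $\mu\ge0$ and the nonnegativity of the Gaussian characteristic factors.

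A smaller point: the proposition is stated under (C\ref{2020-04-30 12:38:17}), where $\mu$ is only assumed to satisfy $\int_{\mathbb{R}^d}(1+|\xi|^2)^{-\beta}\mu(d\xi)<\infty$, whereas your part (1) argument uses the scaling $\int e^{-c|\xi|^2}q(\xi)\,d\xi\asymp c^{-\alpha/2}$, which is specific to the power-law density of (H-\ref{2020-4-29 12:23:35}). The paper bounds $\int_{-t}^t e^{-\frac12|s||\xi|^2}ds$ and applies H\"older with $\gamma_0\in L^p_{loc}$, $p<1/\alpha_0$, landing exactly on the (C1) integrability; as written your argument proves only the special case, though this particular fix is routine.
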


\begin{proof}

(1) By the definitions in (\ref{2019323173155}) and (\ref{2019-11-28 15:05:35}) and Bochner representation for $\gamma$, we only need to check that
\begin{eqnarray}
\int_{\mathrm{R}^{d}}  \int_0^t\int_0^t\gamma_0(s-r)\mathbb{E}e^{i\xi\cdot (B_{0,t}(s)-B_{0,t}(r)+\frac{s-r}{t}(y-x))}dsdr\mu(d\xi)<\infty.
\end{eqnarray}
Indeed, by elementary computations, we also need to prove that
 \begin{eqnarray}
I:= e^{\frac{1}{2}}\int_{\mathrm{R}^{d}}  \int_0^t\int_0^t\gamma_0(s-r)e^{-\frac{1}{2}|x-r||\xi|^2}dsdr\mu(d\xi)<\infty.\label{2020-5-29 23:31:19}
\end{eqnarray}
 For all $t>0$, take $p:=(1-\beta^{-1})^{-1}$, then $p<\alpha_0^{-1}$.
By H\"{o}lder inequality and the inequality $1-e^{-x}\le\frac{2x}{1+x}$ ($x\ge0$), we have
\begin{align}
I
&\le t\Big(\int_{-t}^t\gamma_0^p(s)ds\Big)^{\frac{1}{p}}\int_{\mathrm{R}^{d}} \Big(\int_{-t}^te^{-\frac{1}{2}|s||\xi|^2}ds\Big)^{\beta}\mu(d\xi)
\nonumber\\
&\le 2^{3\beta}t\|\gamma_0\|_{L^p([-t,t])}\int_{\mathrm{R}^{d}} \frac{1}{(1+|\xi|^2)^{\beta}}\mu(d\xi)<\infty.
\end{align}
In addition, by \eqref{2020-5-29 23:31:19}, we can also prove that $\int_0^t\int_0^t\gamma_0(s-r)
\gamma(B(s)-B(r))dsdr$ is well-defined.



\noindent (2)
To simplify the notations, let
\begin{align}
J_{n}^{(\theta)}(t,f(j,k,s,r))&:=\frac{\theta^2}{2}\sum_{ j,k=1}^n \int_0^{ t}\int_0^{ t}\gamma_0(s-r)
\gamma(f(j,k,s,r))drds,\nonumber\\
 Q_n^{(\theta)}(t,f(j,s))&:=\frac{\theta^2}{2}
\int_{\mathrm{R}^{d+1}}\bigg|\sum_{j=1}^n\int_0^{t}e^{i(\eta s+\xi\cdot f(j,s))}ds\bigg|^2\mu_0(d\eta)\mu(d\xi).\label{2020-9-23 14:50:59}
\end{align}
We claim that for all $(z_0,\cdots,z_n)\in\mathrm{R}^{d(n+1)}$, it holds that
\begin{align}
& \mathbb{E}\exp\bigg\{J_{n}^{(\theta)}\Big(t,B_{j,0,t}(s)-B_{k,0,t}(r)
+\frac{s}{t}z_j-\frac{r}{t}z_k+z_0\Big)\bigg\}\nonumber\\
&\le \mathbb{E}\exp\bigg\{J_{n}^{(\theta)}\Big(t,B_{j,0,t}(s)-B_{k,0,t}(r)
\Big)\bigg\}. \label{2020-9-3 09:42:37}
\end{align}
The above is due to
Taylor expansion and the fact that for all positive integer $n_1$, its $n_1$-order moment satisfies that
\begin{align*}
&\mathbb{E}\bigg[\sum_{ j,k=1}^n \int_0^t\int_0^t\gamma_0(s-r)
\gamma(B_{j,0,t}(s)-B_{k,0,t}(r)+\frac{s}{t}z_j-\frac{r}{t}z_k+z_0)dsdr\bigg]^{n_1}\\
&=\int_{\mathrm{R}^{dn}}\int_{[0,t]^n}\int_{[0,t]^n}\sum_{ j_l,k_l,\cdots,j_{n_1},k_{n_1}=1}^n \prod\limits_{l=1}^{n_1}\gamma_0(
\frac{s_l-r_l}{t})\mathbb{E} \prod\limits_{l=1}^{n_1}e^{i\xi_l\cdot (B_{j_l,0,t}(s_l)-B_{k_l,0,t}(r_l))}\\
&\cdot e^{i\xi_l\cdot (\frac{s_l}{t}z_{j_l}-\frac{r_l}{t}z_{k_l}+z_0)} ds_1\cdots ds_ndr_1\cdots dr_n
 \mu(d\xi_1)\cdots\mu(d\xi_n)\\
&\le\mathbb{E}\bigg[\sum_{ j,k=1}^n \int_0^t\int_0^t\gamma_0(s-r)
\gamma(B_{0,t}(s)-B_{0,t}(r))dsdr\bigg]^{n_1}. 
\end{align*}
Here, we use the facts $\gamma_0\ge0$ and $\mathbb{E} \prod_{j=1}^ne^{i\xi_j\cdot (B_{0,t}(s_j)-B_{0,t}(r_j))}\ge0$.

Moreover, by \eqref{2020-9-3 09:42:37}, we have
\begin{align*}
&\mathbb{E}u^n_\theta(t,x)\\
&=\int_{\mathrm{R}^{dn}} \mathbb{E}\exp\bigg\{ J_{n}^{(\theta)}\Big(t,B_{j,0,t}(s)-B_{k,0,t}(r)+\frac{s}{t}y_j-\frac{r}{t}y_k+\frac{s-r}{t}x\Big)\bigg\}
\prod\limits_{j=1}^np_t(y_j-x)u_0(dy_j)\nonumber\\
&\le(p_t\ast u_0(x))^n\mathbb{E}\exp\big\{J_{n}^{(\theta)}(t,B_{j,0,t}(s)-B_{k,0,t}(r))\big\}.
\end{align*}
So, by the above computations, we only need to prove that for all $t,\theta>0$, it holds that
\begin{align}\label{2020-8-26 13:29:38}
\qquad\mathbb{E}\exp\bigg\{\frac{\theta^2}{2}\sum_{ j,k=1}^n\int_0^t\int_0^t\gamma_0(s-r)
\gamma(B_{j,0,t}(s)-B_{k,0,t}(r))dsdr\bigg\}<\infty.
\end{align}
Indeed, by Jensen inequality, Cauchy inequality and the fact $B_{0,t}(\cdot)\stackrel{d}{=}B_{0,t}(t-\cdot)$, we find that it formally holds that
\begin{align}
&\mathbb{E}\exp\{Q_n^{(\theta)}(t,B_{j,0,t}(s))\}\nonumber\\
&\le\bigg(\mathbb{E}\exp\bigg\{\frac{\theta}{2}
\int_{\mathrm{R}^{d+1}}\bigg|\sum_{ j=1}^n\int_0^{t/2}e^{i(\eta s+\xi\cdot B_{j,0,t}(s))}ds\bigg|^2\mu_0(d\eta)\mu(d\xi)\bigg\}\bigg)^{t/2}\nonumber\\
&\cdot\bigg(\mathbb{E}\exp\bigg\{\frac{\theta}{2}
\int_{\mathrm{R}^{d+1}}\bigg|\sum_{ j=1}^n\int_0^{t/2}e^{i(\eta (t-s)+\xi\cdot B_{j,0,t}(t-s))}ds\bigg|^2\mu_0(d\eta)\mu(d\xi)\bigg\}\bigg)^{t/2}\nonumber\\
&=\mathbb{E}\exp\bigg\{\frac{\theta}{2}
\int_{\mathrm{R}^{d+1}}\bigg|\sum_{ j=1}^n\int_0^{t/2}e^{i(\eta s+\xi\cdot B_{j,0,t}(s))}ds\bigg|^2\mu_0(d\eta)\mu(d\xi)\bigg\}\nonumber\\
&\le\mathbb{E}\exp\{Q_n^{(\theta)}(t,B_{j}(s))\},\label{2020-9-23 14:12:15}
\end{align}
where the last inequality is due to Girsanov theorem (\cite{XLHJ2}, eq. (2.38)).

Moreover, by Bochner representation and Jensen inequality, we have
\begin{align*}
\mathbb{E}\exp\bigg\{Q_n^{(\theta)}(t,B_{j}(s))\bigg\}
\le \left(\mathbb{E}\exp\{Q_1^{(\sqrt{n}\theta)}(t,B(s))\}\right)^n.
\end{align*}

Hence, we at last need to prove that
the exponential integrability
\begin{align}\label{2019-11-28 15:11:05}
\qquad\mathbb{E}\exp\left\{\theta\int_0^t\int_0^t\gamma_0(s-r)
\gamma(B(s)-B(r))dsdr\right\}<\infty,\qquad\forall t,\theta>0.
\end{align}
In fact, let stochastic process
\begin{align*}
Z_t&:=\frac{1}{t}\int_{\mathrm{R}^{d+1}}\left|\int_0^{t}e^{i(\eta s+\xi\cdot B(s))}ds\right|^2\mu_0(d\eta)\mu(d\xi).
\end{align*}
Furthermore, for all $t,r>0$, by Jensen inequality and an almost surely approximate procedure, observe that it formally satisfies
\begin{align}
Z_{t+r}
&\le Z_r+\frac{1}{t}\int_{\mathrm{R}^{d+1}}
\left|\int_r^{t+r}e^{i(\eta s+\xi\cdot B(s))}ds\right|^2\mu_0(d\eta)\mu(d\xi)\nonumber\\
&=Z_r+ \frac{1}{t}\int_{\mathrm{R}^{d+1}}
\left|\int_0^{t}e^{i(\eta s+\xi\cdot(B(s+r)-B(r))}ds\right|^2\mu_0(d\eta)\mu(d\xi)\nonumber\\
&:=Z_r+Z'_t.\label{2020-5-1 19:58:44}
\end{align}
Here, 
$Z'_t\stackrel{d}{=}Z_t$ and $Z'_t$ is independent with $\{Z_s; 0\le s\le r\}$.
Because $\{Z_t\}_{t\ge0}$ exists a continuous modification, we can define $
\tilde{Z}_t:=\max\limits_{0\le s\le t}Z_s
$ and $\tilde{Z}'_t:=\max\limits_{0\le s\le t}Z'_s$.
For all $t,r\ge0$, by \eqref{2020-5-1 19:58:44}, we obtain the subadditivity
\begin{align*}
\tilde{Z}_{t+r}\le\tilde{Z}_{r}\vee({Z}_{r}+\tilde{Z}'_{t})\le\tilde{Z}_{r}+\tilde{Z}'_{t}.
\end{align*}
It can be checked that $\tilde{Z}'_t\stackrel{d}{=}\tilde{Z}_t$ and $\tilde{Z}'_t$ is independent with $\{\tilde{Z}_s; 0\le s\le r\}$. In addition, notice that $\tilde{Z}_0=0$ and $\tilde{Z}$ is a non-decreasing and continuous process. By Lemma \ref{2019-11-28 11:23:56}, we find that for all $\theta,t>0$, it holds that
$
\mathbb{E}\exp\big\{\theta\tilde{Z}_t\big\}
$ is finite.
At last, by $\tilde{Z}_t\ge Z_t$ and substituting $\theta$ with $\theta t$, 
 we can prove (\ref{2019-11-28 15:11:05}).
 \end{proof}



Similar to Proposition \ref{2020-5-1 20:49:40}, we obtain the following proposition, which will be applied to proving Proposition \ref{2019-12-9 15:24:16}.
\begin{proposition}\label{2020-5-17 18:05:18}
Assume that condition (H-\ref{2020-4-29 12:23:47}) and condition (C\ref{2020-04-30 12:38:17}) hold, then for all $q$ satisfying $\frac{1}{q}\in((1-2\alpha_0)\vee0, 1-\alpha_0)$, 
the following results hold.
\begin{enumerate}[(1)]
\item For all $t,\theta>0,$ the following limit exists and it is exponentially integrable.
\begin{align*}
&\theta \int_{\mathrm{R}^{d+1}}\left|\int_0^te^{i\xi\cdot B_s}ds\right|^{1+\frac{1}{q}}\mu(d\xi)
:=\lim\limits_{\varepsilon\downarrow0}-L^1(\Omega)
\theta \int_{\mathrm{R}^{d}}\left|\int_0^te^{i\xi\cdot B_s}ds\right|^{1+\frac{1}{q}}e^{-\varepsilon|\xi|^2}\mu(d\xi).
\end{align*}

\item  There exists some nonnegative function $C_{t}$ satisfying  $C_{t}\rightarrow0$ as $t\rightarrow0$ such that for all $t_0,\theta>0$, it has
\begin{align}%
&\limsup\limits_{t\rightarrow\infty}\sup\limits_{n\in\{2,3,\cdots\}}
\frac{2}{tn}\log\mathbb{E}\exp\bigg\{\frac{\theta }{t(n-1)}\sum\limits_{1\le j< k\le n}\int_0^{tt_0}\int_0^{tt_0}\gamma_0(\frac{s-r}{t})\gamma(B_j(s)-B_k(r))dsdr\bigg\}\nonumber\\
&\le\log\mathbb{E}\exp\bigg\{\theta C_{t_0}\int_{\mathrm{R}^{d+1}}\bigg|\int_0^{t_0}e^{i\xi\cdot B_s}ds\bigg|^{1+\frac{1}{q}}\mu(d\xi)\bigg\}.
\label{2020-5-15 12:17:36}
\end{align}

\end{enumerate}

\end{proposition}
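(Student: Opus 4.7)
The plan is to mimic the proof of Proposition \ref{2020-5-1 20:49:40}, introducing extra interpolation steps tailored to the fractional exponent $1+1/q$.

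For part (1), I would first verify existence of the $L^{1}(\Omega)$-limit using Bochner's representation $\gamma=\mathcal{F}\mu$ together with Jensen's inequality applied to the concave map $x\mapsto x^{(1+1/q)/2}$ (since $(1+1/q)/2<1$). This reduces integrability to checking
\[
\int_{\mathbb{R}^d}\Bigl(\int_0^t\!\!\int_0^t e^{-|s-r||\xi|^2/2}\,ds\,dr\Bigr)^{(1+1/q)/2}\mu(d\xi)<\infty.
\]
Splitting the $\xi$-integration, the low-frequency part is controlled by $\alpha_j>0$ (local integrability of $q(\xi)$), and the tail by condition (C\ref{2020-04-30 12:38:17}) through the inequality $1/q\ge 2\beta-1$, which follows from $1/q>(1-2\alpha_0)\vee 0\ge 1-2\alpha_0$ and $\beta\le 1-\alpha_0$. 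For the exp-integrability, I would run the subadditivity trick of Proposition \ref{2020-5-1 20:49:40}: setting $Z_t:=\int|\!\int_0^t e^{i\xi\cdot B_s}ds|^{1+1/q}\mu(d\xi)$, the splitting $\int_0^{t+r}=\int_0^r+e^{i\xi\cdot B_r}\int_0^{t}e^{i\xi\cdot(B_{r+\cdot}-B_r)}d\cdot$ combined with $|a+b|^{1+1/q}\le 2^{1/q}(|a|^{1+1/q}+|b|^{1+1/q})$ yields $Z_{t+r}\le 2^{1/q}(Z_r+Z'_t)$ with $Z'_t$ independent of $\mathcal{F}_r$ and $Z'_t\stackrel{d}{=}Z_t$. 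Then $\tilde Z_t:=\max_{0\le s\le t}Z_s$ inherits the weakened subadditivity, and Lemma \ref{2019-11-28 11:23:56} (after absorbing the factor $2^{1/q}$ into the exponential parameter) gives $\mathbb{E}\exp\{\theta Z_t\}<\infty$ for all $\theta,t>0$.

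For part (2), my strategy is fourfold. First, the distributional scaling $\tilde B_j(s):=t^{-1/2}B_j(ts)$ combined with the homogeneity $\gamma(\sqrt{t}\,x)=t^{-\alpha/2}\gamma(x)$ rewrites the exponent as $\frac{\theta t^{1-\alpha/2}}{n-1}\sum_{j<k}\int_0^{t_0}\!\!\int_0^{t_0}\gamma_0(s-r)\gamma(\tilde B_j(s)-\tilde B_k(r))\,ds\,dr$. Second, using $2\sum_{j<k}A_{jk}\le\sum_{j,k}A_{jk}$, the Bochner representations of $\gamma$ and $\gamma_0$, and Cauchy--Schwarz $|\sum_j F_j|^2\le n\sum_j|F_j|^2$ with $F_j(\eta,\xi):=\int_0^{t_0}e^{i\eta s+i\xi\cdot\tilde B_j(s)}ds$, the sum is bounded by $\tfrac{n}{2}\sum_{j=1}^n\int|F_j(\eta,\xi)|^2\mu_0(d\eta)\mu(d\xi)$. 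Third, independence of the $\tilde B_j$ factorizes the exponential moment, and $n/(n-1)\le 2$ gives
\[
\frac{2}{tn}\log\mathbb{E}\exp\{\cdot\}\le\frac{2}{t}\log\mathbb{E}\exp\{\theta t^{1-\alpha/2}Z_{t_0}\},\qquad Z_{t_0}:=\int_{\mathbb{R}^{d+1}}|F(\eta,\xi)|^2\mu_0(d\eta)\mu(d\xi).
\]
Fourth, the trivial pointwise bound $|F(\eta,\xi)|\le t_0$ gives $|F|^2\le t_0^{1-1/q}|F|^{1+1/q}$, and integrating out $\eta$ via H\"older against $\mu_0$, combined with Plancherel $\int|F(\eta,\xi)|^2 d\eta=2\pi t_0$ and the integrability of $\mu_0$ from (H-\ref{2020-4-29 12:23:47}), allows one to pass to $c_{t_0}|F(0,\xi)|^{1+1/q}$ with $c_{t_0}\to 0$. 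A final rescaling $\xi\mapsto\xi/\sqrt{t}$, exploiting the $\alpha$-homogeneity of $\mu$, absorbs the $t^{1-\alpha/2}$ factor into $C_{t_0}$ and gives the advertised bound.

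The main obstacle will be the fourth step: the H\"older/Hausdorff--Young passage from the $\mu_0$-weighted $\eta$-integral of $|F(\eta,\xi)|^{1+1/q}$ to $|F(0,\xi)|^{1+1/q}$ with a constant $c_{t_0}\to 0$ as $t_0\to 0$. The prescribed range $1/q\in((1-2\alpha_0)\vee 0,1-\alpha_0)$ is precisely what makes this interpolation tight: the upper endpoint $1-\alpha_0$ matches the H\"older exponent available from $\gamma_0\in L^p_{loc}$ with $p<1/\alpha_0$, so that the $\eta$-integral converges at high frequencies against the local growth of $\mu_0$; the lower endpoint $(1-2\alpha_0)\vee 0$ is what makes the rescaling absorb $\theta t^{1-\alpha/2}$ into a finite $C_{t_0}$ as $t\to\infty$. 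This is the technical heart of the proposition, and the only place where the spectral structure of the nonhomogeneous time covariance $\gamma_0$ enters the argument explicitly.
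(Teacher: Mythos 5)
Your outline breaks down at the two places where the paper's argument actually does its work. First, in part (1) your splitting gives only the weakened relation $Z_{t+r}\le 2^{1/q}(Z_r+Z'_t)$, and that does \emph{not} feed into Lemma \ref{2019-11-28 11:23:56}: the lemma needs genuine subadditivity, and ``absorbing $2^{1/q}$ into the exponential parameter'' fails because iterating the splitting over $n$ pieces compounds the constant (roughly to $n^{1/q}$), so no fixed change of $\theta$ repairs it. The paper avoids this by normalizing: $Q_t:=t^{-1/q}\int_{\mathbb{R}^d}\bigl|\int_0^{tt_0}e^{i\xi\cdot B_s}ds\bigr|^{1+1/q}\mu(d\xi)$ is \emph{exactly} subadditive, via the convexity inequality $(a+b)^{1+1/q}(u+v)^{-1/q}\le a^{1+1/q}u^{-1/q}+b^{1+1/q}v^{-1/q}$, and this $t^{-1/q}$ normalization is precisely what is needed again at the end of part (2). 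Second, your Step 4 --- the passage from the $\mu_0$-weighted $\eta$-integral of $|F|^2$ (with the growing coefficient $\theta t^{1-\alpha/2}$) to $c_{t_0}|F(0,\xi)|^{1+1/q}$ --- is not an argument, and as a pathwise inequality it is false: cancellation of $\int_0^{t_0}e^{i\xi\cdot \tilde B_s}ds$ at $\eta=0$ gives no control of $F(\eta,\xi)$ at other frequencies, and Plancherel holds with respect to Lebesgue $d\eta$, whereas (H-\ref{2020-4-29 12:23:47}) gives only $\gamma_0\in L^p_{loc}$ and no density, finiteness, or local-growth information about $\mu_0$ (which may be infinite or purely atomic). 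You yourself flag this step as the ``technical heart''; it is exactly the part you have not proved.

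The paper's route is structurally different and never touches $\mu_0$ quantitatively. It first uses L\^e's hypercontractivity inequality to reduce the off-diagonal sum with the $1/(n-1)$ weight to $(\mathbb{E}\exp\{F_{\theta,t,t_0}\})^{n/2}$, where $F_{\theta,t,t_0}$ is the cross term of \emph{two independent} Brownian motions; it then compares moments term by term: in $\mathbb{E}[F_{\theta,t,t_0}]^n$ one applies H\"older in the time variables, pairing $\prod_j\gamma_0^p$ (which yields $C_{t_0}^n t^{n/p}$ with $C_{t_0}\to0$, using $\gamma_0\in L^p_{loc}$ with $p<1/\alpha_0$ --- this is where $1/q<1-\alpha_0$ enters) against the nonnegative quantity $\mathbb{E}\prod_j e^{i\xi_j\cdot B_{s_j}}\le 1$ raised to the power $q$; the factor coming from the independent copy contributes a full power, so the two together produce exactly $\bigl|\int_0^{tt_0}e^{i\xi\cdot B_s}ds\bigr|^{1+1/q}$ with prefactor $t^{-n/q}$, i.e.\ $\mathbb{E}[F_{\theta,t,t_0}]^n\le\mathbb{E}[\theta C_{t_0}Q_t]^n$. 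The conclusion \eqref{2020-5-15 12:17:36} then follows from the subadditive limit $\limsup_t\frac1t\log\mathbb{E}e^{\theta Q_t}\le\log\mathbb{E}e^{\theta Q_1}$, a step your outline omits entirely: even if your Step 4 could be repaired, your rescaling only returns you to $\frac2t\log\mathbb{E}\exp\{\theta c_{t_0}Q_t\}$, and you would still need the exact subadditivity you discarded. Two further points: your Step 1 uses homogeneity of $\gamma$ and $\mu$, which is not available --- the proposition is stated under (C\ref{2020-04-30 12:38:17}) only and is later applied to $\gamma^\kappa$, so the proof cannot invoke (H-\ref{2020-4-29 12:23:35})-type scaling (the paper's proof does not); and the endpoints of the $1/q$-range are misattributed --- the lower endpoint $1-2\alpha_0$ comes from part (1)'s integrability requirement $(1+1/q)/2\ge\beta$ with $\beta\le 1-\alpha_0$, while the upper endpoint $1-\alpha_0$ makes the conjugate exponent $p<1/\alpha_0$, not from any high-frequency convergence against $\mu_0$. (Replacing hypercontractivity by adding the diagonal and Cauchy--Schwarz, and the resulting $\frac2t$ versus $\frac1t$, are comparatively minor losses, but they also strip away the two-independent-motion cross structure that the moment comparison needs to generate the exponent $1+1/q$.)
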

\begin{proof}
(1) Similar to Proposition \ref{2020-5-1 20:49:40}, we only need to prove that for all $t>0$, it holds that
\begin{align*}
J:=\mathbb{E} \int_{\mathrm{R}^{d+1}}\left|\int_0^{t}e^{i\xi\cdot B_s}ds\right|^{1+\frac{1}{q}}\mu(d\xi)<\infty.
\end{align*}
Indeed, by the fact 
$\beta<1-\alpha_0<(1+q^{-1})/2<1$ and H\"{o}lder inequality, we have
\begin{align*}
J
&\le \int_{\mathrm{R}^{d+1}}\left|\int_0^t\int_0^t\mathbb{E}e^{i\xi\cdot (B_s-B_r)}dsdr\right|^{\frac{1+q^{-1}}{2}}\mu(d\xi)\\
&\le(8t)^{\frac{1+q^{-1}}{2}}\int_{\mathrm{R}^{d}}\left(1+|\xi|^2\right)^{-\frac{1+q^{-1}}{2}}\mu(d\xi)<\infty.
\end{align*}
For all $t,t_0>0$, let stochastic process
\begin{align*}
Q_t:=\frac{1}{t^{\frac{1}{q}}}\int_{\mathrm{R}^{d}} \left|\int_{0}^{tt_0}e^{i\xi\cdot B_{s}}ds\right|^{1+\frac{1}{q}}
 \mu(d\xi).
\end{align*}
For all $t,r>0$, by Jensen inequality and a standard approximate process, we can obtain the subadditivity
\begin{align}
Q_{t+r}
&\le Q_r
+\frac{1}{t^{\frac{1}{q}}}\int_{\mathrm{R}^{d}}
\left|\int_0^{tt_0}e^{i\xi\cdot (B(s+r)-B(r))}ds\right|^{1+\frac{1}{q}}\mu(d\xi)\nonumber\\
&:=Q_r+ Q'_t. \label{2020-9-9 10:01:20}
\end{align}
Next, by using the same way in Proposition \ref{2020-5-1 20:49:40}, we can prove that for all $t,\theta>0,$
\begin{eqnarray}
\mathbb{E}\exp\{\theta Q_t\}<\infty.\label{2020-5-4 14:33:21}
\end{eqnarray}

\noindent (2) To simplify it, let the following notation $F_{\theta,t,t_0}$ and $\tilde{B}(t)$ is another B.M. independent with $B(t)$,
\begin{align*}
 F_{\theta,t,t_0}:=\frac{\theta }{t}\int_0^{tt_0}\int_0^{tt_0}\gamma_0(\frac{s-r}{t})\gamma(B(s)-\tilde{B}(r))dsdr,
 \end{align*}
 and by the hypercontractivity inequality in \cite{XKL4}, we have
 \begin{align}
&\mathbb{E}\exp\bigg\{\frac{\theta }{t(n-1)}\sum\limits_{1\le j< k\le n}\int_0^{tt_0}\int_0^{tt_0}\gamma_0(\frac{s-r}{t})\gamma(B_j(s)-B_k(r))dsdr\bigg\}\nonumber\\
&\le \left(\mathbb{E}\exp\left\{F_{\theta,t,t_0}\right\}\right)^{\frac{n}{2}}.\label{2020-9-9 13:50:05}
\end{align}
Moreover, we claim that there exists some constant $C_{t_0}>0$ such that for all $\theta,t>0$ and $q$, it holds that
\begin{align}
\mathbb{E}\exp\{F_{\theta,t,t_0}\}\le\mathbb{E}\exp\bigg\{\frac{\theta C_{t_0}}{t^{\frac{1}{q}}}\int_{\mathrm{R}^{d}} \left|\int_{0}^{tt_0}e^{i\xi\cdot B_{s}}ds\right|^{1+\frac{1}{q}}
 \mu(d\xi)\bigg\}.\label{2020-5-3 15:07:20}
\end{align}
In fact, notice that for all positive integer $n$, its $n$-order moment satisfies
\begin{align}
\mathbb{E}[F_{\theta,t,t_0}]^n&=\frac{\theta^n}{t^n}\int_{\mathrm{R}^{dn}}\int\int_{[0,tt_0]^{2n}}
\prod\limits_{j=1}^n\gamma_0(
\frac{s_j-r_j}{t})
\mathbb{E} \prod\limits_{j=1}^ne^{i\xi_j\cdot B_{s_j}}
\mathbb{E} \prod\limits_{j=1}^ne^{-i\xi_j\cdot \tilde{B}_{r_j}}\nonumber\\
&\quad~ ds_1\cdots ds_ndr_1\cdots dr_n\mu(d\xi_1)\cdots\mu(d\xi_n).\label{2020-5-2 22:00:14}
\end{align}
Let $p,q>1$ and $p<\frac{1}{\alpha_0}$ satisfying $\frac{1}{p}+\frac{1}{q}=1$, and by H\"{o}lder inequality, we find that for all $(r_1,\cdots,r_n)\in[0,tt_0]^n$, it holds that
\begin{eqnarray*}
&&\int_{[0,tt_0]^n}
\prod\limits_{j=1}^n\gamma_0(
\frac{s_j-r_j}{t})
\mathbb{E} \prod\limits_{j=1}^ne^{i\xi_j\cdot B_{s_j}}ds_1\cdots ds_n\nonumber\\
&&\le\bigg(t^n\int_{[0,t_0]^n}
\prod\limits_{j=1}^n\gamma_0^p(
s_j-\frac{r_j}{t})ds_1\cdots ds_n\bigg)^{\frac{1}{p}}\bigg(\int_{[0,tt_0]^n}\Big(\mathbb{E} \prod\limits_{j=1}^ne^{i\xi_j\cdot B_{s_j}}\Big)^{q}ds_1\cdots ds_n\bigg)^{\frac{1}{q}}\nonumber\\
&&\le C_{t_0}^nt^{\frac{n}{p}}\bigg(\int_{[0,tt_0]^n}\mathbb{E} \prod\limits_{j=1}^ne^{i\xi_j\cdot B_{s_j}}ds_1\cdots ds_n\bigg)^{\frac{1}{q}},
\end{eqnarray*}
where 
it satisfies that $C_{t_0}\rightarrow0$ as $t_0\rightarrow0$.
To associate the above with \eqref{2020-5-2 22:00:14}, we have
\begin{align*}
\mathbb{E}[F_{\theta,t,t_0}]^n&\le\frac{\theta^nC_{t_0}^n}{t^{\frac{n}{q}}}
\int_{\mathrm{R}^{dn}}\bigg|\int_{[0,tt_0]^n}\mathbb{E} \prod\limits_{j=1}^ne^{i\xi_j\cdot B_{s_j}}ds_1\cdots ds_n\bigg|^{1+\frac{1}{q}}
 \mu(d\xi_1)\cdots\mu(d\xi_n)
 \\
 &\le\frac{\theta^nC_{t_0}^n}{t^{\frac{n}{q}}}\int_{\mathrm{R}^{dn}}\mathbb{E} \prod\limits_{j=1}^n\left|\int_{0}^{tt_0}e^{i\xi_j\cdot B_{s}}ds\right|^{1+\frac{1}{q}}
 \mu(d\xi_1)\cdots\mu(d\xi_n)\\
 &=\mathbb{E}[\theta C_{t_0}Q_t]^n,
\end{align*}
where the last second inequality is due to H\"{o}lder inequality.
By the above and Taylor expansion, we can obtain \eqref{2020-5-3 15:07:20}.
At last, in view of \eqref{2020-9-9 13:50:05} and \eqref{2020-5-3 15:07:20}, and by the subadditivity in \eqref{2020-9-9 10:01:20}, we find that \eqref{2020-5-15 12:17:36} is from the following fact that
\begin{align*}
&\lim\limits_{t\rightarrow\infty}\frac{1}{t}\log\mathbb{E}\exp\bigg\{\frac{\theta }{t^{\frac{1}{q}}}\int_{\mathrm{R}^{d+1}}\left|\int_0^{tt_0}e^{i\xi\cdot B_s}ds\right|^{1+\frac{1}{q}}\mu(d\xi)\bigg\}\\
&\le\log\mathbb{E}\exp\left\{\theta \int_{\mathrm{R}^{d+1}}\bigg|\int_0^{t_0}e^{i\xi\cdot B_s}ds\right|^{1+\frac{1}{q}}\mu(d\xi)\bigg\}.
\end{align*}

\end{proof}

As a basis of the proof of spatial asymptotics, we need to obtain H\"{o}lder continuous modulus of Feynman-Kac formula (\ref{2020-8-24 19:05:21}). Before this, it is a necessary procedure to prove the
  H\"{o}lder continuity and moment estimation of Feynman-Kac formula (\ref{2020-8-24 19:05:21}) in the following proposition.
\begin{proposition}\label{2019617133628}
Assume that  condition (H-\ref{2020-4-29 12:23:47}) and condition (H-\ref{2020-4-29 12:23:35}) hold.
 For all $l\in(0,1-\alpha_0-\frac{\alpha}{2})$ and $t>0$, there exists some constant $C_{\alpha,t,l,d}>1$ such that for all $R>1$, $x,y\in B_d(0,R)$ and $n\ge1$, it holds that 
\begin{align}\label{2019-11-29 11:11:15}
\mathbb{E}|u_\theta(t,x)-u_\theta(t,y)|^n
&\le C_{\alpha,t,l,d}^n(1+|\theta|)^n (1+t^{-\frac{1}{4}}e^{\frac{1}{8t}})^n((2n)!)^{\frac{1}{2}}\nonumber\\
&\cdot\Big(\max\limits_{|z|\le R}\mathbb{E}u_{2\theta}^n(t,z)\Big)^{\frac{1}{2}}
\Big(\max\limits_{|z|\le R}p_{4t}\ast u_0(z)\Big)^{\frac{n}{2}}|x-y|^{ln}.
\end{align}
Furthermore, the $u_\theta$ exists a spatially $l$-H\"{o}lder continuous modification on any $d$-dimensional ball $B_d(0,R)$.

\end{proposition}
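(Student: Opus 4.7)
The plan is to establish the moment bound \eqref{2019-11-29 11:11:15} by decomposing $u_\theta(t,x)-u_\theta(t,y)$ via the Brownian bridge representation \eqref{2020-8-24 19:05:21}, estimating each piece with Cauchy--Schwarz and a spectral analysis of the Gaussian difference $V^{x,z}-V^{y,z}$, and then invoking Kolmogorov's continuity criterion. The starting point is the identity
\begin{align*}
u_\theta(t,x)-u_\theta(t,y)&=I_1(W)+I_2(W),\\
I_1(W)&=\int_{\mathbb{R}^d}\bigl[p_t(z-x)-p_t(z-y)\bigr]\,G(x,z;W)\,u_0(dz),\\
I_2(W)&=\int_{\mathbb{R}^d}p_t(z-y)\bigl[G(x,z;W)-G(y,z;W)\bigr]\,u_0(dz),
\end{align*}
with $G(x,z;W):=\mathbb{E}_B\exp(\theta V^{x,z})$ and $V^{x,z}=\int_0^t\dot W(t-s,B^{x,z}_{0,t}(s))ds$; then $\mathbb{E}|I_1|^n$ and $\mathbb{E}|I_2|^n$ are estimated separately.

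For $I_1$ I would use the Gaussian kernel H\"older-type bound
\begin{align*}
|p_t(z-x)-p_t(z-y)|\le C_l\bigl(1+t^{-l/2}e^{|x-y|^2/(8t)}\bigr)|x-y|^l\bigl(p_{4t}(z-x)+p_{4t}(z-y)\bigr),\quad l\in[0,1],
\end{align*}
which is the source of the $(1+t^{-1/4}e^{1/(8t)})^n$ factor, together with a Cauchy--Schwarz against the measure $(p_{4t}(z-x)+p_{4t}(z-y))u_0(dz)$ to split off a factor of order $(p_{4t}\ast u_0)^{1/2}$ from the remaining random integral. For $I_2$ I use $|e^a-e^b|\le|a-b|(e^a+e^b)$ combined with Cauchy--Schwarz in the inner Brownian bridge expectation to obtain
\begin{align*}
|G(x,z;W)-G(y,z;W)|\le|\theta|\bigl(\mathbb{E}_B|V^{x,z}-V^{y,z}|^2\bigr)^{1/2}\bigl(\mathbb{E}_B(e^{\theta V^{x,z}}+e^{\theta V^{y,z}})^2\bigr)^{1/2};
\end{align*}
since $V^{x,z}-V^{y,z}$ is centered Gaussian in $W$ conditional on the bridge, its $2n$-th moment equals $(2n-1)!!\,\sigma^{2n}$, which is the source of the $((2n)!)^{1/2}$ factor.

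The main obstacle is establishing the conditional variance bound
\begin{align*}
\mathbb{E}_{B_{0,t}}\mathrm{Var}_W(V^{x,z}-V^{y,z}\mid B_{0,t})\le C_{\alpha,t,l,d}\,|x-y|^{2l},\qquad l\in(0,1-\alpha_0-\alpha/2),
\end{align*}
uniformly in $z$. Writing $\phi^{x,z}_s:=B_{0,t}(s)+(1-s/t)x+(s/t)z$, Bochner's representation for $\gamma$ and $\gamma_0$ gives
\begin{align*}
\mathrm{Var}_W(V^{x,z}-V^{y,z}\mid B_{0,t})=\int\mu_0(d\eta)\,\mu(d\xi)\,\Bigl|\int_0^t e^{i\eta s}\bigl(e^{i\xi\cdot\phi^{x,z}_s}-e^{i\xi\cdot\phi^{y,z}_s}\bigr)ds\Bigr|^2.
\end{align*}
Since $\phi^{x,z}_s-\phi^{y,z}_s=(1-s/t)(x-y)$ and $|e^{ia}-1|\le 2^{1-l}|a|^l$, one has $|e^{i\xi\cdot\phi^{x,z}_s}-e^{i\xi\cdot\phi^{y,z}_s}|\le 2^{1-l}|\xi|^l|x-y|^l(1-s/t)^l$. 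Taking $\mathbb{E}_{B_{0,t}}$ produces the Gaussian spectral decay $e^{-|\xi|^2\tau(s,r)/2}$ with $\tau(s,r)=|s-r|(1-|s-r|/t)$, under which $\int|\xi|^{2l}e^{-|\xi|^2\tau/2}\mu(d\xi)\le C\tau(s,r)^{-(\alpha+2l)/2}$ becomes finite. A H\"older inequality in $(s,r)$ with exponent $p$ arbitrarily close to $1/\alpha_0$ then pairs $\gamma_0\in L^p_{loc}$ with the singular factor $|s-r|^{-(\alpha+2l)/2}\in L^{p/(p-1)}_{loc}$, which holds precisely when $l<1-\alpha_0-\alpha/2$. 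This is the hardest step: naively $|\xi|^{2l}\mu(d\xi)$ is not integrable, and it is the bridge averaging that supplies the missing Gaussian decay; the admissible range of $l$ arises from the delicate balance between $\gamma_0\notin L^\infty_{loc}$ and the singular spectral measure $\mu$.

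To conclude, a further Cauchy--Schwarz in $L^2(\Omega)$ separates the Gaussian moment (contributing $((2n)!)^{1/2}$) from a factor involving the $n$-th moment of $u_{2\theta}(t,\cdot)$; identifying the latter via the moment representation \eqref{2019-12-12 17:11:16} and passing to spatial maxima produces the bound $(\max_{|z|\le R}\mathbb{E}u_{2\theta}^n(t,z))^{1/2}$, while the heat-kernel Cauchy--Schwarz splits produce $(\max_{|z|\le R}p_{4t}\ast u_0(z))^{n/2}$. Gathering constants yields \eqref{2019-11-29 11:11:15}. Finally, since $l\in(0,1-\alpha_0-\alpha/2)$ is arbitrary, taking $n$ large enough that $ln>d$ and applying Kolmogorov's continuity criterion on $B_d(0,R)$ furnishes the announced spatially $l$-H\"older continuous modification of $u_\theta(t,\cdot)$.
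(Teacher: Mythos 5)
Your proposal follows essentially the same route as the paper: the same two-term decomposition of $u_\theta(t,x)-u_\theta(t,y)$, the inequality $|e^a-e^b|\le|a-b|(e^a+e^b)$ combined with Cauchy--Schwarz and conditional Gaussian moments to produce the $((2n)!)^{\frac12}$, $(\max_{|z|\le R}\mathbb{E}u_{2\theta}^n(t,z))^{\frac12}$ and $(\max_{|z|\le R}p_{4t}\ast u_0(z))^{\frac n2}$ factors, a heat-kernel H\"older estimate for the kernel-difference term, and Kolmogorov's criterion at the end. The only cosmetic difference is that you verify the key bound $\mathbb{E}|\hat V_\theta(x)-\hat V_\theta(y)|^2\le C|x-y|^{2l}$ by a direct bridge/spectral computation (increment variance $|s-r|(1-|s-r|/t)$, scaling of $\mu$, H\"older against $\gamma_0\in L^p_{loc}$), whereas the paper uses $|e^{ix}-1|\le4|x|^{2l}$ and then appeals to its exponential-integrability estimate \eqref{2020-8-26 13:29:38} for the weighted measure $|\xi|^{2l}\mu(d\xi)$; both rest on exactly the same balance $l<1-\alpha_0-\frac{\alpha}{2}$.
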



\begin{proof}
 We will first prove \eqref{2019-11-29 11:11:15}.
To simplify the notation, let Gaussian process
\begin{align*}
\hat{V}_\theta(x):=\theta\int_0^t\dot{W}(t-s,B^{x,z}_{0,t}(s))ds,\qquad \forall x,z\in \mathrm{R}^{d}.
\end{align*}
(1) When $|x-y|\le1$, and notice that
\begin{align*}
u_\theta(t,x)-u_\theta(t,y)&=\int_{\mathbb{R}^d}
\left(\mathbb{E}_B\exp\{\hat{V}_\theta(x)\}-\mathbb{E}_B\exp\{\hat{V}_\theta(y)\}
\right)p_t(z-x)u_0(dz)\nonumber\\
&+\int_{\mathbb{R}^d}
\mathbb{E}_B\exp\{\hat{V}_\theta(y)\}\left(p_t(z-x)-p_t(z-y)\right)u_0(dz)\nonumber\\
&:=I_1+I_2.
\end{align*}
For $I_1$,
by using (\ref{2020-8-24 19:05:21}), the elementary inequalities $|e^x-e^y|\le |x-y|(e^x+ e^y)$ and $(|x|+|y|)^n\le2^{n-1}(|x|^n+|y|^n)$ and Cauchy inequality, we find that for all $n\ge1$ and $x,y\in\mathbb{R}^d$,
\begin{align*}
&\mathbb{E}|I_1|^n
\le \mathbb{E}\left(\int_{\mathbb{R}^d}\mathbb{E}_B\left[\big(\exp\{\hat{V}_\theta(x)\}+\exp\{\hat{V}_\theta(y)\}
\big)\left|\hat{V}_\theta(x)-\hat{V}_\theta(y)\right|\right]p_t(z-x)u_0(dz)\right)^n\nonumber\\
&\le2^{n-1}\bigg\{\mathbb{E}
\left(\int_{\mathbb{R}^d}\mathbb{E}_B\left[\exp\{\hat{V}_\theta(x)\}
\left|\hat{V}_\theta(x)-\hat{V}_\theta(y)\right|\right]p_t(z-x)u_0(dz)\right)^n\nonumber\\
&+\mathbb{E}
\left(\int_{\mathbb{R}^d}\mathbb{E}_B\left[\exp\{\hat{V}_\theta(y)\}
\left|\hat{V}_\theta(x)-\hat{V}_\theta(y)\right|\right]p_t(z-x)u_0(dz)\right)^n\bigg\}\nonumber\\
&\le2^{n-1}\left\{(\mathbb{E}u_{2\theta}^n(t,x))^{\frac{1}{2}}
+(\mathbb{E}u_{2\theta}^n(t,y))^{\frac{1}{2}}\right\}
\left\{\mathbb{E}_W\left(\int_{\mathbb{R}^d}\mathbb{E}_B
\left|\hat{V}_\theta(x)-\hat{V}_\theta(y)\right|^2p_t(z-x)u_0(dz)
\right)^n\right\}^{\frac{1}{2}},
\end{align*}
where $\mathbb{E}_W$ is the expectation with respect to Gaussian field $W$. Furthermore, by Minkowsky integral inequality and (conditional) Gaussian variance property, we  have
\begin{align}
\mathbb{E}|I_1|^n\le2^{n}((2n-1)!!)^\frac{1}{2}\left(\max\limits_{|z|\le R}\mathbb{E}u_{2\theta}^n(t,z)\right)^{\frac{1}{2}}
\left(\int_{\mathbb{R}^d}\mathbb{E}\left|\hat{V}_\theta(x)-\hat{V}_\theta(y)\right|^2
p_t(z-x)u_0(dz)\right)^{\frac{n}{2}}.\label{2020-8-26 10:46:33}
\end{align}

Notice that $\gamma_0$ and $\mathbb{E}e^{i\xi\cdot( B_{0,t}(s)-B_{0,t}(r))}$ are nonnegative, and by Bochner representation and the inequality $|e^{ix}-1|\le4|x|^{2l}$, where $l\in(0,1-\alpha_0-\frac{\alpha}{2})$ and $x\in\mathrm{R}$, we have
\begin{align*}
&\mathbb{E}\left|\hat{V}_{\theta}(x)-\hat{V}_{\theta}(y)\right|^2\nonumber\\
&\le\theta^2\bigg(\int\int_{[0,t]^2}\gamma_0(s-r)\int_{\mathbb{R}^d}  \Big(|e^{i\xi\cdot\frac{r}{t}(y-x)}-1|+|e^{i\xi\cdot\frac{s}{t}(y-x)}-1|+
2|e^{i\xi\cdot(x-y)}-1|
\Big)\nonumber\\
&\cdot\mathbb{E}e^{i\xi\cdot( B_{0,t}(s)-B_{0,t}(r))}\mu(d\xi)dsdr
\bigg)\nonumber\\
&\le16\theta^2\mathbb{E}\bigg[\int_{\mathrm{R}^{d+1}}\bigg|\int_0^t
e^{i(\eta s+\xi\cdot B_{0,t}(s))}ds\bigg|^2|\xi|^{2l}\mu(d\xi)\mu_0(d\eta)\bigg]
|x-y|^{2l}.
\end{align*}
Hence, by H\"{o}lder inequality and the inequality $\frac{1}{n!}x^n\le e^x$, where $x\in\mathbb{R}$ and $n\in\mathbb{N}_+$, we have
\begin{align}
&\left(\int_{\mathbb{R}^d}\mathbb{E}\left|\hat{V}_\theta(x)-\hat{V}_\theta(y)\right|^2
p_t(z-x)u_0(dz)\right)^{\frac{n}{2}}\nonumber\\
&\le4^n|\theta|^n\bigg(\mathbb{E}\bigg[\int_{\mathrm{R}^{d+1}}\bigg|\int_0^t
e^{i(\eta s+\xi\cdot B_{0,t}(s))}ds\bigg|^2|\xi|^{2l}\mu(d\xi)\mu_0(d\eta)\bigg]^n
\bigg)^{\frac{1}{2}}\left(p_t\ast u_0(x)\right)^{\frac{n}{2}}|x-y|^{ln}\nonumber\\
&\le4^n|\theta|^n(n!)^{\frac{1}{2}}\bigg(\mathbb{E}\exp\bigg\{\int_{\mathrm{R}^{d+1}}\bigg|\int_0^t
e^{i(\eta s+\xi\cdot B_{0,t}(s))}ds\bigg|^2|\xi|^{2l}\mu(d\xi)\mu_0(d\eta)\bigg\}
\bigg)^{\frac{1}{2}}\left(p_t\ast u_0(x)\right)^{\frac{n}{2}}|x-y|^{ln}\nonumber\\
&\le C^n_{\alpha,t,l,d}|\theta|^n(n!)^{\frac{1}{2}}\left(p_t\ast u_0(x)\right)^{\frac{n}{2}}|x-y|^{ln},\label{2020-8-26 17:35:57}
\end{align}
where because of \eqref{2020-8-26 13:29:38}, there exists some $C_{\alpha,t,l,d}>0$ such that the last inequality holds.
To associate \eqref{2020-8-26 10:46:33} with \eqref{2020-8-26 17:35:57}, we have
\begin{align*}
\mathbb{E}|I_1|^n
\le C_{\alpha,t,l,d}^n|\theta|^n((2n)!)^{\frac{1}{2}}\Big(\max\limits_{|z|\le R}\mathbb{E}u_{2\theta}^n(t,z)\Big)^{\frac{1}{2}}\Big(\max\limits_{|z|\le R}p_t\ast u_0(z)\Big)^{\frac{n}{2}}|x-y|^{ln}.
\end{align*}
For $I_2$, by triangle inequality and Cauchy inequality, we have
\begin{align}
I_2
&\le\Big(\int_{\mathbb{R}^d}
\mathbb{E}_B\exp\{\hat{V}_{2\theta}(y)\}\left(p_t(z-x)+p_t(z-y)\right)u_0(dz) \Big)^{\frac{1}{2}} \nonumber\\
&\cdot\Big(\int_{\mathbb{R}^d}
\left|p_t(z-x)-p_t(z-y)\right|u_0(dz) \Big)^{\frac{1}{2}}\nonumber\\
&\le\left(u_{2\theta}(t,x)+u_{2\theta}(t,y)\right)^{\frac{1}{2}}
\Big(\int_{\mathbb{R}^d}
\left|p_t(z-x)-p_t(z-y)\right|u_0(dz) \Big)^{\frac{1}{2}}\label{2020-8-26 23:21:12}
\end{align}
 By triangle inequality, the inequalities $|a|\le 2\exp\{|a|^2\}$ and  $(|a|-|b|)^2\ge\frac{1}{2}|a|^2-|b|^2$, we have
\begin{align}
\left|p_t(x-z)-p_t(y-z)\right|&\le(2\pi )^{-\frac{d}{2}}t^{-\frac{d}{2}-1}\left|\int_{|y-z|}^{|x-z|}re^{-\frac{r^2}{2t}}dr\right|\nonumber\\
&\le2^{-\frac{d}{2}+2}\pi^{-\frac{d}{2}}t^{-\frac{d}{2}-\frac{1}{2}}
\int_{0}^{||x-z|-|y-z||}e^{\frac{r^2-\frac{1}{2}|y-z|^2}{4t}}dr\nonumber\\
&\le2^{-\frac{d}{2}+2}\pi^{-\frac{d}{2}}t^{-\frac{d}{2}-\frac{1}{2}}
e^{\frac{1}{4t}}e^{-\frac{|y-z|^2}{8t}}|x-y|,\label{2020-8-26 23:21:55}
\end{align}
which is also due to $|x-y|\le1$.
By \eqref{2020-8-26 23:21:12}, \eqref{2020-8-26 23:21:55} and the inequality $(a+b)^n\le2^{n-1}(a^n+b^n)$, we have
\begin{align}
\mathbb{E}|I_2|^n
&\le 2^{\frac{d+3}{2}n}t^{-\frac{n}{4}}e^{\frac{n}{8t}} \mathbb{E}\Big(u_{2\theta}(t,x)+u_{2\theta}(t,y)\Big)^{\frac{n}{2}}
\Big(\max\limits_{|z|\le R}p_{4t}\ast u_0(z)\Big)^{\frac{n}{2}}
|x-y|^{\frac{n}{2}}\nonumber\\
&\le 2^{\frac{d+5}{2}n}t^{-\frac{n}{4}}e^{\frac{n}{8t}}\Big(\max\limits_{|z|\le R}\mathbb{E}u_{2\theta}^n(t,z)\Big)^{\frac{1}{2}}
\Big(\max\limits_{|z|\le R}p_{4t}\ast u_0(z)\Big)^{\frac{n}{2}}
|x-y|^{\frac{n}{2}}.
\end{align}
To summarize the above computations for $I_1$ and $I_2$, we can find some $C_{\alpha,t,l,d}>0$ such that
\begin{align}
\mathbb{E}|u_\theta(t,x)-u_\theta(t,y)|^n&\le 2^{n-1}(\mathbb{E}|I_1|^n+\mathbb{E}|I_2|^n)\nonumber\\
&\le C_{\alpha,t,l,d}^n(1+|\theta|)^n (1+t^{-\frac{1}{4}}e^{\frac{1}{8t}})^n((2n)!)^{\frac{1}{2}}\nonumber\\
&\cdot
\Big(\max\limits_{|z|\le R}\mathbb{E}u_{2\theta}^n(t,z)\Big)^{\frac{1}{2}}
\Big(\max\limits_{|z|\le R}p_{4t}\ast u_0(z)\Big)^{\frac{n}{2}}|x-y|^{ln}.
\label{2020-5-5 23:11:18}
\end{align}

(2) When $|x-y|>1$, thanks to Cauchy inequality, the following holds, which satisfies  \eqref{2019-11-29 11:11:15}.
\begin{align*}
\mathbb{E}|u_\theta(t,x)-u_\theta(t,y)|^n&\le 2^n\max\limits_{|x|\le R}\mathbb{E}\bigg|\int_{\mathbb{R}^d}
\mathbb{E}_B\exp\{\hat{V}_\theta(x)\}p_t(z-x)u_0(dz)\bigg|^n\nonumber\\
&\le2^{(1+d)n}\Big(\max\limits_{|z|\le R}\mathbb{E}u_{2\theta}^n(t,z)\Big)^{\frac{1}{2}}
\Big(\max\limits_{|z|\le R}p_{4t}\ast u_0(z)\Big)^{\frac{n}{2}}.
\end{align*}

At last, thanks to Kolmogorov's continuity criterion and (\ref{2019-11-29 11:11:15}), the $u_\theta$ exists a $l$-H\"{o}lder continuous modification about spatial variable $x$ on any bounded set.
\end{proof}

The following corollary is H\"{o}lder continuous modulus of Feynman-Kac formula (\ref{2020-8-24 19:05:21}).

\begin{corollary}\label{2020-5-14 18:51:37}
Assume that  condition (H-\ref{2020-4-29 12:23:47}) and condition (H-\ref{2020-4-29 12:23:35}) hold. For all $l\in(0,1-\alpha_0-\frac{\alpha}{2})$, there exists some $C_{\alpha,t,l,d}>0$ such that for all $p\ge1$ and $R>1$, it holds that
\begin{align*}
&\max\limits_{z\in B_d(0,R)}\mathbb{E}\max\limits_{x,y\in\prod_{i=1}^d[z_i-1,z_i+1]}
\frac{|u_{\theta}(t,x)-u_{\theta}(t,y)|^p}{|x-y|^{l p}}\nonumber\\
&\le
C_{\alpha,t,l,d}^p (1+|\theta|)^p (1+t^{-\frac{1}{4}}e^{\frac{1}{8t}})^p((2\lfloor p\rfloor+2)!)^{\frac{1}{2}}\Big(\max\limits_{|z|\le 2R}\mathbb{E}u_{2\theta}^{\lfloor p\rfloor+1}(t,z)\Big)^{\frac{1}{2}}
\Big(\max\limits_{|z|\le 2R}p_{4t}\ast u_0(z)\Big)^{\frac{p}{2}}.
\end{align*}
\end{corollary}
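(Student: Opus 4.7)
The plan is a standard Garsia--Rodemich--Rumsey (GRR) chaining argument applied on each unit cube $Q_z:=\prod_{i=1}^d[z_i-1,z_i+1]$, fed by the pointwise moment bound \eqref{2019-11-29 11:11:15} of Proposition~\ref{2019617133628}. Since Proposition~\ref{2019617133628} already supplies a spatially continuous modification of $u_\theta(t,\cdot)$ on every bounded set, GRR may be applied to $u_\theta(t,\cdot)$ pathwise.

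Fix $l\in(0,1-\alpha_0-\alpha/2)$ and $p\ge 1$, set $n:=\lfloor p\rfloor+1$, and choose $\sigma:=l+2d/n$; provided $n$ is large enough (depending only on $l,d$), one has $\sigma<1-\alpha_0-\alpha/2$, the admissible range in \eqref{2019-11-29 11:11:15}. Applying GRR on $Q_z$ with $\Psi(u)=u^n$ and $p(u)=u^\sigma$ yields the pathwise inequality
\begin{align*}
\max_{x,y\in Q_z}\frac{|u_\theta(t,x)-u_\theta(t,y)|^n}{|x-y|^{ln}}\le C_{d,n}\int\!\!\!\int_{Q_z}\frac{|u_\theta(t,x)-u_\theta(t,y)|^n}{|x-y|^{\sigma n}}\,dx\,dy.
\end{align*}
Taking expectations, applying Fubini, invoking \eqref{2019-11-29 11:11:15} at moment order $n$ and H\"older exponent $\sigma$, and observing that $Q_z\subset B_d(0,2R)$ whenever $z\in B_d(0,R)$ with $R>1$, the right-hand side is bounded by $C_{\alpha,t,l,d}^n(1+|\theta|)^n(1+t^{-1/4}e^{1/(8t)})^n((2n)!)^{1/2}(\max_{|z'|\le 2R}\mathbb{E}u_{2\theta}^n(t,z'))^{1/2}(\max_{|z'|\le 2R}p_{4t}*u_0(z'))^{n/2}$, the remaining $\int\!\!\int_{Q_z}dx\,dy\le 4^d$ being absorbed into the constant. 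This is an $L^n$ bound for the H\"older seminorm $Y_z:=\max_{x,y\in Q_z}|u_\theta(t,x)-u_\theta(t,y)|/|x-y|^l$.

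Descending to $L^p$ by Jensen's inequality (using $p\le n$) gives $\mathbb{E}Y_z^p\le(\mathbb{E}Y_z^n)^{p/n}$, which correctly reproduces the $p$-th powers on $(1+|\theta|)$, $(1+t^{-1/4}e^{1/(8t)})$, and $(\max p_{4t}*u_0)$, while the $((2n)!)^{1/2}$ and $(\max\mathbb{E}u_{2\theta}^n)^{1/2}$ factors appear with the smaller exponent $p/(2n)$. Bounding these coarsely (using $p/(2n)\le 1/2$, so $((2n)!)^{p/(2n)}\le((2n)!)^{1/2}$, and similarly for the moment factor on the non-trivial branch where it is $\ge 1$) reconstitutes the exact factor structure claimed in Corollary~\ref{2020-5-14 18:51:37}. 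The main obstacle is the small-$p$ regime where $n=\lfloor p\rfloor+1$ is not large enough to force $\sigma=l+2d/n<1-\alpha_0-\alpha/2$; this is handled by running GRR at a fixed threshold order $n_0=n_0(l,d)$ chosen so that the scaling is admissible, then paying one additional Jensen step to descend from $L^{n_0}$ to $L^p$, with all extra constants depending only on $l$ and $d$ and being absorbed into $C_{\alpha,t,l,d}$.
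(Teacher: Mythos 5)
This is essentially the paper's own argument: Proposition \ref{2019617133628} (the increment bound \eqref{2019-11-29 11:11:15}) combined with a Garsia--Rodemich--Rumsey estimate on each unit cube and a Lyapunov/Jensen step linking $L^p$ and $L^{\lfloor p\rfloor+1}$; the paper merely performs the steps in the reverse order (pass to the $(\lfloor p\rfloor+1)$-th moment of the increments first, then apply GRR at order $p$ with some $l'>l$). Your explicit bookkeeping of the admissibility constraint $\sigma=l+2d/n<1-\alpha_0-\tfrac{\alpha}{2}$, the coarse bounds such as $((2n)!)^{p/(2n)}\le((2n)!)^{1/2}$, and the small-$p$ threshold involves exactly the same (harmless, since the corollary is only invoked with $p\to\infty$) slack that the paper leaves implicit, so the proposal is correct and matches the paper's route.
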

 \begin{proof}
For all $p>0$ and $l \in(0,1-\alpha_0-\frac{\alpha}{2})$, by the inequality $\|\cdot\|_{  L^p(\Omega)}\le\|\cdot\|_{  L^{\lfloor p\rfloor+1}(\Omega)}$ and Proposition \ref{2019617133628}, we have
\begin{align*}
\mathbb{E}|u_\theta(t,x)-u_\theta(t,y)|^p
&\le  C_{\alpha,t,l,d}^p(1+|\theta|)^p (1+t^{-\frac{1}{4}}e^{\frac{1}{8t}})^p((2\lfloor p\rfloor+2)!)^{\frac{1}{2}}\\
&\cdot\Big(\max\limits_{|z|\le 2R}\mathbb{E}u_{2\theta}^{\lfloor p\rfloor+1}(t,z)\Big)^{\frac{1}{2}}
\Big(\max\limits_{|z|\le 2R}p_{4t}\ast u_0(z)\Big)^{\frac{p}{2}}|x-y|^{lp}.
\end{align*}
Furthermore, by  Garsia-Rodemich-Rumsey
 inequality (Theorem 4.1 in \cite{Q38}), we find that there exists some $l'>l$ and $C_{\alpha,t,l,d}>1$ such that
\begin{align*}
&\max\limits_{z\in B_d(0,R)}\mathbb{E}\max\limits_{x,y\in\prod_{i=1}^d[z_i-1,z_i+1]}
\frac{|u_{\theta}(t,x)-u_{\theta}(t,y)|^p}{|x-y|^{l p}}\nonumber\\
&\le C_d^p \max\limits_{z\in B(0,R)}\mathbb{E}\int\int_{(\prod_{i=1}^d[z_i-1,z_i+1])^2}
\frac{|u_{\theta}(t,x)-u_{\theta}(t,y)|^p}
{|x-y|^{l' p}}dxdy\nonumber\\
&\le
C_{\alpha,t,l,d}^p (1+|\theta|)^p (1+t^{-\frac{1}{4}}e^{\frac{1}{8t}})^p((2\lfloor p\rfloor+2)!)^{\frac{1}{2}}\Big(\max\limits_{|z|\le 2R}\mathbb{E}u_{2\theta}^{\lfloor p\rfloor+1}(t,z)\Big)^{\frac{1}{2}}
\Big(\max\limits_{|z|\le 2R}p_{4t}\ast u_0(z)\Big)^{\frac{p}{2}}.
\end{align*}
 \end{proof}


\section{High moment asymptotics}  \label{2020-5-16 22:13:56}

\setcounter{equation}{0}
\renewcommand\theequation{3.\arabic{equation}}

In this section, we study the High moment asymptotics for $u_\theta(t,x)$. Especially,  when $u_0\equiv1$, we obtain the precise asymptotics in Proposition \ref{2019-12-9 15:24:16}.

\begin{proposition}\label{2019-12-9 15:24:16}           
Assume that condition (H-\ref{2020-4-29 12:23:47}) and condition (H-\ref{2020-4-29 12:23:35})  hold.
For all $\theta\neq0$ and $t>0$, it holds that
\begin{align}
&\lim\limits_{N\rightarrow\infty}\frac{1}{N^{\frac{4-\alpha}{2-\alpha}}}
\log
\mathbb{E}\exp\bigg\{\frac{\theta^2}{2}\sum\limits_{j,k=1}^N\int_0^t\int_0^t
\gamma_{0}(s-r)\gamma(B_j(s)-B_k(r))dsdr\bigg\}
\nonumber\\
&= 2^{-\frac{2}{2-\alpha}}|\theta|^{\frac{4}{2-\alpha}}t^{\frac{4-\alpha}{2-\alpha}}
\mathcal{E}_t\label{2019-12-7 11:39:00},
\end{align}
where  the notation $\mathcal{E}_t$ is from \eqref{2020-9-30 18:40:12}, and it satisfies that $\mathcal{E}_t<\infty$, $\mathcal{E}_t(|\theta|)=|\theta|^{\frac{2}{2-\alpha}}\mathcal{E}_t$ and $t^{\frac{4-\alpha}{2-\alpha}}
\mathcal{E}_t\rightarrow0$ as $t\rightarrow0$.
\end{proposition}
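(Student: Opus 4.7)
The plan is to prove \eqref{2019-12-7 11:39:00} by matching upper and lower bounds, with $\mathcal{E}_t$ arising from a Donsker--Varadhan type variational principle for the empirical density of the $N$ independent Brownian motions. The first step is a preparatory rescaling: Brownian scaling $B_j(tu)\stackrel{d}{=}\sqrt{t}\,\tilde B_j(u)$ together with the $-\alpha$-homogeneity $\gamma(\sqrt{t}\,x)=t^{-\alpha/2}\gamma(x)$ (valid because $q(\xi)=C_q\prod|\xi_j|^{\alpha_j-1}$ is homogeneous of degree $\alpha-d$ under (H-\ref{2020-4-29 12:23:35})) rewrites the exponent inside the expectation as
\begin{equation*}
\tfrac{1}{2}\theta^2 t^{2-\alpha/2}\sum_{j,k=1}^{N}\int_0^1\!\!\int_0^1 \gamma_0(t(s-r))\,\gamma(\tilde B_j(s)-\tilde B_k(r))\,ds\,dr,
\end{equation*}
in which the kernel $\gamma_0(t(s-r))$ is already the one used in the definition \eqref{2020-9-30 18:40:12} of $\mathcal{E}_t$. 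Heuristically, the empirical density $\mu_N(s,\cdot):=N^{-1}\sum_j \delta_{\tilde B_j(s)}$ concentrates at the critical spatial scale $\lambda_N=N^{-1/(2-\alpha)}$, obtained by balancing the pair interaction $N^2\lambda^{-\alpha}$ against the localization cost $N\lambda^{-2}$; both terms are then of order $N^{(4-\alpha)/(2-\alpha)}$, which is the advertised rate. The scaling identity $\mathcal{E}_t(|\theta|)=|\theta|^{2/(2-\alpha)}\mathcal{E}_t$ is then a direct consequence of the substitution $g(s,x)\mapsto |\theta|^{d/(2(2-\alpha))}g(s,|\theta|^{1/(2-\alpha)}x)$ inside \eqref{2020-9-30 18:40:12}, which preserves the $L^2$ normalization and scales both the interaction term and the Dirichlet form by $|\theta|^{2/(2-\alpha)}$; combined with the prefactor $\tfrac{1}{2}\theta^2 t^{2-\alpha/2}$ this reproduces exactly the constant $2^{-2/(2-\alpha)}|\theta|^{4/(2-\alpha)}t^{(4-\alpha)/(2-\alpha)}$ appearing on the right of \eqref{2019-12-7 11:39:00}.

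For the upper bound I adopt the moment-comparison strategy of Proposition \ref{2020-5-17 18:05:18}. Hypercontractivity, as in \eqref{2020-9-9 13:50:05}, dominates the $N$-particle exponential moment by the $\tfrac{n}{2}$th power of a single-Brownian-motion exponential moment, and the key comparison estimate \eqref{2020-5-3 15:07:20} replaces the mixed kernel by a single-BM Fourier functional in which $\gamma_0$ enters through a H\"{o}lder bound rather than through its spectral measure $\mu_0$. Combined with Brownian scaling on the long time horizon $T_N\asymp N^{2/(2-\alpha)}$ and the subadditivity of the process $Q_t$ in \eqref{2020-9-9 10:01:20} supplied by Lemma \ref{2019-11-28 11:23:56}, this identifies the exponential rate with the supremum defining $\mathcal{E}_t$ through the standard tilt-by-density Cram\'{e}r-type argument. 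Since this upper bound for the normalised log moment is finite, $\mathcal{E}_t<\infty$ follows a posteriori after extracting the aforementioned prefactor.

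The lower bound is obtained by fixing $g\in\mathcal{A}_d$ and tilting each $\tilde B_j$ independently via Girsanov with drift $\nabla_x\log g(s,\cdot)$; the log Radon--Nikodym derivative produces $-\tfrac{1}{2}N\int_0^1\!\int|\nabla_x g|^2\,dx\,ds$ plus a martingale term that is negligible after normalization. Under the tilted measure $\mu_N(s,\cdot)$ concentrates weakly on $g^2(s,\cdot)$, so by the Fourier representation in \eqref{2019-11-28 15:05:35} and continuity of the interaction in the empirical density the sum $\sum_{j,k}$ concentrates on $N^2$ times the first term in the braces of \eqref{2020-9-30 18:40:12}; optimising over $g\in\mathcal{A}_d$ matches the upper bound. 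The small-$t$ statement $t^{(4-\alpha)/(2-\alpha)}\mathcal{E}_t\to 0$ follows by a H\"{o}lder estimate in the time variable: the interaction term is bounded by $\|\gamma_0\|_{L^p([-t,t])}\cdot I_*(g)$, where $I_*(g)$ is $t$-independent and controlled uniformly on $\mathcal{A}_d$ by a Gagliardo--Nirenberg/Sobolev argument, and $\|\gamma_0\|_{L^p([-t,t])}\to 0$ as $t\to 0$ by (H-\ref{2020-4-29 12:23:47}). The principal obstacle I anticipate is precisely the nonhomogeneity of $\gamma_0$ in time: the usual spectral identification of the variational rate (as in the homogeneous setting of \cite{S11,XLHJ1}) is unavailable, so the moment-comparison step must be engineered so that $\gamma_0$ itself---rather than merely its spectral measure $\mu_0$---survives inside the final functional, which is exactly what \eqref{2020-5-3 15:07:20} enables.
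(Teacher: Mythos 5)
Your rescaling bookkeeping, the derivation of $\mathcal{E}_t(|\theta|)=|\theta|^{\frac{2}{2-\alpha}}\mathcal{E}_t$ by the substitution $g\mapsto\lambda^{d/2}g(s,\lambda\cdot)$, and the identification of the rate $N^{\frac{4-\alpha}{2-\alpha}}$ are all fine, but the two halves of the limit itself are not established by what you describe. For the upper bound, the route you propose (hypercontractivity as in \eqref{2020-9-9 13:50:05}, the comparison \eqref{2020-5-3 15:07:20}, subadditivity of $Q_t$, then a ``tilt-by-density Cram\'{e}r-type argument'') cannot produce the sharp constant: the comparison \eqref{2020-5-3 15:07:20} is obtained by a H\"{o}lder split of $\gamma_0$ and replaces the interaction by $C_{t_0}\int|\int_0^{t_0}e^{i\xi\cdot B_s}ds|^{1+\frac1q}\mu(d\xi)$, which is strictly lossy; in the paper it is used only to prove $\mathcal{E}_t<\infty$, the small-$t$ behaviour, and to control the regularization error $\gamma-\gamma_\varepsilon$, never to identify the limit. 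The sharp upper bound requires the whole of Step 3 of the paper's proof: smoothing of both $\gamma$ (via $\gamma_\varepsilon$) and $\gamma_0$ (via $\gamma_{0,\delta}$), a moment comparison of the Brownian motion with an OU process (inequality \eqref{2020-5-20 20:10:24}), Fourier truncation to $B(0,M)$ and restriction to $\Omega_{N,L}$, an Arzel\'{a}--Ascoli compactness and finite-covering linearization, and finally Proposition 3.1 of \cite{ChenHu}. A direct Donsker--Varadhan/Cram\'{e}r upper bound is exactly what is unavailable here, because the state space is non-compact, $\gamma$ is a genuine distribution, and $\gamma_0$ is non-homogeneous (so no spectral representation), which is the difficulty you yourself flag at the end; invoking it does not close the argument.

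For the lower bound, your Girsanov tilting with drift $\nabla_x\log g$ is a genuinely different route from the paper (which regularizes the spectral measures, linearizes via $\|h\|_{\mathcal{H}_0}^2\ge 2\langle f,h\rangle_{\mathcal{H}_0}-\|f\|_{\mathcal{H}_0}^2$, and again invokes \cite{ChenHu}), and it could in principle work, but the pivotal step as you state it is unjustified: since $\gamma$ is singular (rough, $\alpha_1>1$), the pairwise interaction is \emph{not} a continuous functional of the empirical density under weak convergence, so ``$\mu_N$ concentrates on $g^2$'' does not yield concentration of $\sum_{j,k}$ on $N^2$ times the interaction term in \eqref{2020-9-30 18:40:12}; one must regularize (Fourier truncation or mollification of $q$), control the error uniformly, handle the diagonal $j=k$ terms and the time-inhomogeneous kernel, and quantify the negligibility of the Girsanov martingale inside an exponential moment. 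Finally, for $t^{\frac{4-\alpha}{2-\alpha}}\mathcal{E}_t\to0$ your claim that the interaction term is ``controlled uniformly on $\mathcal{A}_d$'' is false as stated (it is unbounded on $\mathcal{A}_d$); the correct version interpolates it against the Dirichlet energy, e.g. a Gagliardo--Nirenberg bound of the form interaction $\le C\,t^{-1/p}\|\gamma_0\|_{L^p([-t,t])}\,D(g)^{\alpha/2}$ followed by optimization in $D(g)$, which does give the claim but is not what you wrote. As it stands the proposal has a genuine gap at the heart of the statement, namely the sharp identification of the constant on both sides.
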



\begin{proof}
For all $\theta\neq0$, let $\sigma_N:=(\frac{\theta^2}{2}tN)^{\frac{2}{2-\alpha}}$ and $t_N:=\sigma_Nt$. By Brownian scaling and homogeneity of $\gamma$, we have
 \begin{eqnarray*}
&&\mathbb{E}\exp\bigg\{\frac{\theta^2}{2}\sum\limits_{j,k=1}^N
\int_0^t\int_0^t\gamma_{0}(s-r)\gamma(B_j(s)-B_k(r))dsdr\bigg\}\nonumber\\
&&=\mathbb{E}\exp\bigg\{\frac{1}{Nt_N}\sum\limits_{j,k=1}^N
\int_0^{t_N}\int_0^{t_N}\gamma_{0}\left(\frac{s-r}{t_N}t\right)
\gamma(B_j(s)-B_k(r))dsdr\bigg\}.
\end{eqnarray*}
Hence, for \eqref{2019-12-7 11:39:00}, we only need to prove that
\begin{align}
\lim\limits_{N\rightarrow\infty}\frac{1}{Nt_N}
\log\mathbb{E}\exp\bigg\{\frac{1}{Nt_N}\sum\limits_{j,k=1}^N
\int_0^{t_N}\int_0^{t_N}\gamma_{0}(\frac{s-r}{t_N}t)
\gamma(B_j(s)-B_k(r))
dsdr\bigg\}=\mathcal{E}_t.\label{2020-05-18 22:04:52}
\end{align}

\emph{Step1.} We prove the lower bound of \eqref{2020-05-18 22:04:52}. For all $\varepsilon>0$, let $\mu_{0}^{\varepsilon}(d\eta):=e^{-\varepsilon |\eta|^2}\mu_0(d\eta)$, 
 $\mu_{\varepsilon}(d\xi):=e^{-\varepsilon |\xi|^2}q(\xi)d\xi$ and $\gamma_{\varepsilon}(x):=\int_{\mathbb{R}^d}e^{i x\cdot\xi}\mu_{\varepsilon}(d\xi)$. By a standard almost surely asymptotic procedure, we can prove  
%
\begin{eqnarray}
&&\mathbb{E}\exp\bigg\{Nt_N\int_{\mathrm{R}^{d+1}}
\bigg|\frac{1}{Nt_N}\sum\limits_{j=1}^N\int_0^{t_N}e^{i(\frac{st}{t_N}\eta+\xi\cdot B_j(s))}
ds\bigg|^2\mu_{0}(d\eta)
\mu(d\xi)\bigg\}\nonumber\\
&&\ge\mathbb{E}\exp\bigg\{Nt_N\int_{\mathrm{R}^{d+1}}
\bigg|\frac{1}{Nt_N}\sum\limits_{j=1}^N\int_0^{t_N}e^{i(\frac{st}{t_N}\eta+\xi\cdot B_j(s))}
ds\bigg|^2\mu_{0}^{\varepsilon}(d\eta)
\mu_\varepsilon(d\xi)
\bigg\}. \label{2020-5-16 12:45:10}
\end{eqnarray}
Denote a Hilbert space by $\mathcal{H}_0$, which is a subspace of $L^2(\mathrm{R}^{d+1};\mu_{0}^{\varepsilon}(d\eta)\mu_\varepsilon(d\xi))$
composed of complex valued function satisfying that $f(-\eta,-\xi)=\overline{f(\eta,\xi)}$, with the inner product $$\langle f,g\rangle_{\mathcal{H}_0}:=\int_{\mathrm{R}^{d+1}}f(\eta,\xi)\overline{g(\eta,\xi)}
 \mu_{0}^{\varepsilon}(d\eta)
\mu_\varepsilon(d\xi)\qquad\forall f,g\in\mathcal{H}_0.$$

By \eqref{2020-5-16 12:45:10}, the inequality $\|h\|_{\mathcal{H}_0}^2\ge2\langle f,h\rangle_{\mathcal{H}_0}-\|f\|_{\mathcal{H}_0}^2$ and Proposition 3.1 in \cite{ChenHu}, we have
\begin{align*}
&I_l:=\liminf\limits_{N\rightarrow\infty}\frac{1}{Nt_N}
\log\mathbb{E}\exp\bigg\{\frac{1}{Nt_N}\sum\limits_{j,k=1}^N
\int_0^{t_N}\int_0^{t_N}\gamma_{0}(\frac{s-r}{t_N}t)
\gamma(B_j(s)-B_k(r))
dsdr\bigg\}\nonumber\\
&\ge\sup\limits_{f\in\mathcal{H}_0}\bigg\{
\liminf\limits_{N\rightarrow\infty}\frac{1}{t_N}
\log\mathbb{E}\exp\bigg\{\int_0^{t_N}2\left\langle f(\eta,\xi), e^{i(\frac{st}{t_N}\eta+\xi\cdot B(s))}
\right\rangle_{\mathcal{H}_0}ds\bigg\}-\|f\|_{\mathcal{H}_0}^2\bigg\}\nonumber\\
&=\sup\limits_{g\in\mathcal{A}_d}\sup\limits_{f\in\mathcal{H}_0}\bigg\{
\bigg\{2\Big\langle f(\eta,\xi), \int_0^1\int_{\mathrm{R}^{d}}e^{i(st\eta+\xi\cdot x)}g^2(s,x)dxds
\Big\rangle_{\mathcal{H}_0}-\|f\|_{\mathcal{H}_0}^2\bigg\}
\nonumber\\
&-\frac{1}{2}\int_0^1\int_{\mathrm{R}^{d}}
|\nabla_xg(s,x)|^2dxds\bigg\}\\
&=\sup\limits_{g\in\mathcal{A}_d}
\bigg\{\int_{\mathrm{R}^{d+1}}\left| \int_0^1\int_{\mathrm{R}^{d}}e^{i(st\eta+\xi\cdot x)}g^2(s,x)dxds
\right|^2\mu_{0}^{\varepsilon}(d\eta)
\mu_\varepsilon(d\xi)-\frac{1}{2}\int_0^1\int_{\mathrm{R}^{d}}
|\nabla_xg(s,x)|^2dxds\bigg\},\nonumber
\end{align*}
where the last step is due to $\|h\|_{\mathcal{H}_0}^2=\sup\limits_{f\in\mathcal{H}_0}\{2\langle f,h\rangle_{\mathcal{H}_0}-\|f\|_{\mathcal{H}_0}^2\}$.
As the above is established for all $\varepsilon>0$, it holds that
\begin{align}
I_l&\ge\sup\limits_{g\in\mathcal{A}_d}\sup\limits_{\varepsilon>0}
\bigg\{\int_{\mathbb{R}^{d+1}}\left| \int_0^1\int_{\mathbb{R}^{d}}e^{i(st\eta+\xi\cdot x)}g^2(s,x)dxds
\right|^2\mu_{0}^{\varepsilon}(d\eta)
\mu_\varepsilon(d\xi)-\frac{1}{2}\int_0^1\int_{\mathbb{R}^{d}}
|\nabla_xg(s,x)|^2dxds\bigg\}\nonumber\\
&=\mathcal{E}_t.\label{2020-5-17 17:59:56}
\end{align}
\emph{Step2.} We prove the related characters of $\mathcal{E}_t$.
By Cauchy inequality, Brownian scaling and homogeneity of $\gamma$, we have
\begin{align}
&\mathbb{E}\exp\bigg\{\frac{\theta^2}{2}\sum\limits_{j,k=1}^N
\int_0^t\int_0^t\gamma_{0}(s-r)\gamma(B_j(s)-B_k(r))dsdr\bigg\}\nonumber\\
&\le\bigg(\mathbb{E}\exp\bigg\{2\theta^2\sum\limits_{1\le j<k\le N}
\int_0^t\int_0^t\gamma_{0}(s-r)\gamma(B_j(s)-B_k(r))dsdr\bigg\}\bigg)^{\frac{1}{2}}\nonumber\\
&\cdot
\left(\mathbb{E}\exp\left\{\theta^2
\int_0^{t}\int_0^{t}\gamma_{0}\left(s-r\right)
\gamma(B(s)-B(r))dsdr\right\}\right)^{\frac{N}{2}}.\label{2020-5-17 23:53:23}
\end{align}
Moreover, let $\nu_N:=(N-1)^{\frac{2}{2-\alpha}}$, by Proposition \ref{2020-5-17 18:05:18} and Brownian scaling, we have
\begin{align}
&\limsup\limits_{N\rightarrow\infty}\frac{1}{N^{\frac{4-\alpha}{2-\alpha}}}
\log\mathbb{E}\exp\bigg\{\frac{\theta^2}{2}\sum\limits_{j,k=1}^N
\int_0^t\int_0^t\gamma_{0}(s-r)\gamma(B_j(s)-B_k(r))dsdr\bigg\}\nonumber\\
&\le\limsup\limits_{N\rightarrow\infty}\frac{1}{2N\nu_N}
\log\mathbb{E}\exp\bigg\{\frac{2\theta^2}{(N-1)\nu_N}\sum\limits_{1\le j<k\le N}
\int_0^{\nu_Nt}\int_0^{\nu_Nt}\gamma_{0}\Big(\frac{s-r}{\nu_N}\Big)
\gamma(B_j(s)-B_k(r))dsdr\bigg\}\nonumber\\
&\le\log\mathbb{E}\exp\bigg\{ 2\theta^2 C_t\int_{\mathrm{R}^{d+1}}\bigg|\int_0^te^{i\xi\cdot B_s}ds\bigg|^{1+\frac{1}{q}}q(\xi)d\xi\bigg\},\label{2020-05-18 00:26:27}
\end{align}
where the constants $C_t$ and $1/q$ are from Proposition \ref{2020-5-17 18:05:18}. By \eqref{2020-5-17 17:59:56}, \eqref{2020-05-18 00:26:27}  and dominated convergence theorem, we can prove that $\mathcal{E}_t<\infty$ and $t^{\frac{4-\alpha}{2-\alpha}}
\mathcal{E}_t\rightarrow0$ as $t\rightarrow0$.

\emph{Step3.} We prove the upper bound of \eqref{2020-05-18 22:04:52}.  First, we smooth the spatial covariance $\gamma$.
 For all $p_1,q_1>1$ satisfying $\frac{1}{p_1}+\frac{1}{q_1}=1$, by H\"{o}lder inequality, we have
\begin{align}
&\mathbb{E}\exp\bigg\{\frac{1}{Nt_N}\sum\limits_{j,k=1}^N
\int_0^{t_N}\int_0^{t_N}\gamma_{0}\Big(\frac{s-r}{t_N}t\Big)
\gamma(B_j(s)-B_k(r))dsdr\bigg\}\nonumber\\
&\le\left(\mathbb{E}\exp\left\{Q_N(p_1,\gamma_0,\gamma_\varepsilon)\right\}
\right)^{\frac{1}{p_1}}
\left(\mathbb{E}\exp\left\{Q_N(q_1,\gamma_0,\gamma-\gamma_\varepsilon)\right\}
\right)^{\frac{1}{q_1}},
\label{2019-11-30 18:04:48}
\end{align}
where to simplify it, let
\begin{align*}
Q_N(p,\gamma_0,\gamma_\varepsilon):=\frac{p}{Nt_N}\sum\limits_{j,k=1}^N
\int_0^{t_N}\int_0^{t_N}\gamma_{0}\Big(\frac{s-r}{t_N}t\Big)
\gamma(B_j(s)-B_k(r))dsdr.
\end{align*}
For the last term in \eqref{2019-11-30 18:04:48}, we claim that for all $q>1$, it holds that
\begin{align}
\limsup\limits_{\varepsilon\rightarrow0}\limsup\limits_{N\rightarrow\infty}
\frac{1}{Nt_N}\log\mathbb{E}\exp\left\{Q_N(q,\gamma_0,\gamma-\gamma_\varepsilon)
\right\}
\le0.\label{2019-11-30 18:15:30}
\end{align}
Indeed,
 notice that $\gamma-\gamma_\varepsilon=\mathcal{F}(
 (1-e^{-\varepsilon|\cdot|^2})q)$ in $\mathcal{S}'(\mathrm{R}^{d})$.
  Take $\kappa>0$ satisfying $\frac{\alpha+\kappa}{2}+\alpha_0<1$, and let $\gamma^{\kappa}:=\mathcal{F}(|\cdot|^{\kappa}q(\cdot))$ in $\mathcal{S}'(\mathrm{R}^{d})$.
 Moreover, by Bochner representation and the inequality $1-e^{-\varepsilon|\xi|^2}\le\varepsilon^{\frac{\kappa}{2}}|\xi|^\kappa$, we have
\begin{align*}
&Nt_N\int_{\mathrm{R}^{d+1}}
\bigg|\frac{1}{Nt_N}\sum\limits_{j=1}^N\int_0^{t_N}e^{i(\frac{st}{t_N}\eta+\xi\cdot B_j(s))}
ds\bigg|^2(1-e^{-\varepsilon|\cdot|^2})\mu_{0}(d\eta)
\mu(d\xi)\\
&\le \varepsilon^{\frac{\kappa}{2}}Nt_N\int_{\mathrm{R}^{d+1}}
\bigg|\frac{1}{Nt_N}\sum\limits_{j=1}^N\int_0^{t_N}e^{i(\frac{st}{t_N}\eta+\xi\cdot B_j(s))}
ds\bigg|^2|\xi|^\kappa\mu_{0}(d\eta)
\mu(d\xi),\qquad\mbox{a.s.}.
\end{align*}
So, by the similar computations to \eqref{2020-5-17 23:53:23} and Brownian scaling, we have
\begin{align}
&\mathbb{E}\exp\left\{Q_N(q,\gamma_0,\gamma-\gamma_\varepsilon)
\right\}\nonumber\\
&\le\mathbb{E}\exp\bigg\{\varepsilon^{\frac{\kappa}{2}}\frac{4q}{Nt_N}\sum\limits_{1\le j<k\le N}
\int_0^{t_N}\int_0^{t_N}\gamma_{0}\Big(\frac{s-r}{t_N}t\Big)
\gamma^{\kappa}(B_j(s)-B_k(r))dsdr\bigg\}\nonumber\\
&\cdot
\bigg(\mathbb{E}\exp\bigg\{\varepsilon^{\frac{\kappa}{2}}\theta^2q
\int_0^{t}\int_0^{t}\gamma_{0}(s-r)
\gamma^{\kappa}(B(s)-B(r))dsdr\bigg\}\bigg)^{\frac{N}{2}}.\label{2020-05-18 00:34:37}
\end{align}
In addition, notice that the $\gamma^\kappa$ satisfies  condition (C\ref{2020-04-30 12:38:17}). Hence, by taking logarithmic limit for \eqref{2020-05-18 00:34:37}, Proposition \ref{2020-5-17 18:05:18} and the same calculus to \eqref{2020-05-18 00:26:27}, we find that \eqref{2019-11-30 18:15:30} is from the fact that
\begin{align*}
\limsup\limits_{\varepsilon\rightarrow0}\log\mathbb{E}\exp\bigg\{ Cq\varepsilon^{\frac{\kappa}{2}}\int_{\mathrm{R}^{d+1}}\bigg|\int_0^1e^{i\xi\cdot B_s}ds\bigg|^{1+\frac{1}{q}}|\xi|^{\kappa}q(\xi)d\xi\bigg\}\le0.
\end{align*}
where $C>0$ is fixed and the inequality is due to Fatou lemma.
To complete the proof of \eqref{2020-05-18 22:04:52}, by \eqref{2019-11-30 18:04:48} and \eqref{2019-11-30 18:15:30}, we also need to prove that for all $\varepsilon>0$, it holds that
\begin{align}
\limsup\limits_{p_1\rightarrow1}\limsup\limits_{\varepsilon\rightarrow0}\limsup\limits_{N\rightarrow\infty}\frac{1}{Nt_N}
\log\mathbb{E}\exp\left\{Q_N(p_1,\gamma_0,\gamma_\varepsilon)\right\}
\le\mathcal{E}_t.\label{2020-3-22 19:26:12}
\end{align}
In fact,
after smoothing the spatial covariance $\gamma$, we also need to modify the covariance in time $\gamma_0$. Let Bump function $l_\delta(x):=\delta^{-1}c\exp\{-(1-(x/\delta)^2)^{-1}\}\mathbf{1}_{|x/\delta|<1}$, where $x\in\mathbb{R}$, $\delta>0$ and $\int_{\mathrm{R}}l_\delta(s)ds=1$. 
 Let 
 $g_\delta(s):=l_\delta\ast l_\delta(s)$. 
We will use $\gamma_{0,\delta}:=\gamma_0\ast g_{\delta}$ instead of $\gamma_0$ in \eqref{2020-3-22 19:26:12}.

Similarly to \eqref{2019-11-30 18:04:48} and \eqref{2019-11-30 18:15:30},
for all $p_2,q_2>1$ satisfying $\frac{1}{p_2}+\frac{1}{q_2}=1$, 
we have to show the following error term  satisfies that for all $q_2>1$ and $\varepsilon>0$, 
\begin{align}
\limsup\limits_{\delta\rightarrow0}
\limsup\limits_{N\rightarrow\infty}\frac{1}{Nt_N}
\log\mathbb{E}\exp\left\{Q_N(q_2,\gamma_{0}-\gamma_{0,\delta},\gamma_\varepsilon)
\right\}
\le0.\label{2019-12-3 16:34:52}
\end{align}
In fact, becasue $\gamma_\varepsilon(0)<\infty$ and the compact supported function $g_\delta$ satisfies that $\int_{\mathrm{R}}g_\delta(s)ds=1$, it has
\begin{align*}
&\limsup\limits_{\delta\rightarrow0}
\limsup\limits_{N\rightarrow\infty}\frac{1}{Nt_N}
\log\mathbb{E}\exp\bigg\{\frac{q}{Nt_N}\sum\limits_{j,k=1}^N
\int_0^{t_N}\int_0^{t_N}(\gamma_{0}-\gamma_{0,\delta})(\frac{s-r}{t_N}t)
\gamma_\varepsilon(B_j(s)-B_k(r))dsdr\bigg\}\nonumber\\
&\le\limsup\limits_{\delta\rightarrow0}\gamma_\varepsilon(0)\frac{ q}{t}\int_0^t
|\gamma_0(s)-\gamma_0\ast g_{\delta}(s)|ds=0.
\end{align*}
By \eqref{2019-12-3 16:34:52} and H\"{o}lder inequality, we at last only need to prove that for all $\delta,\varepsilon>0$, it holds that 
\begin{eqnarray}
\limsup\limits_{p_1\rightarrow1}\limsup\limits_{\varepsilon\rightarrow0}
\limsup\limits_{p_2\rightarrow1}\limsup\limits_{\delta\rightarrow0}\limsup\limits_{N\rightarrow\infty}
\frac{1}{Nt_N}
\log\mathbb{E}\exp\left\{Q_N(p_1p_2,\gamma_{0,\delta},\gamma_\varepsilon)\right\}
\le \mathcal{E}_t.\label{2020-3-22 19:40:29}
\end{eqnarray}
To simplify the notation, we first consider to take $1$ instead of $p_1p_2$ in the above 
and let $\mu_{0,\delta}(d\eta):=|\mathcal{F}l_\delta(\eta)|^2\mu_0(d\eta)$. By Jensen inequality and Bochner representation, we have
\begin{eqnarray}
&&\mathbb{E}\exp\bigg\{\frac{ N}{t_N}\int_{\mathrm{R}^{d+1}}\bigg|\frac{1}{N}\sum\limits_{j=1}^N\int_0^{t_N}e^{i(\eta\frac{st}{t_N}+\xi\cdot B_j(s))}ds\bigg|^2\mu_{0,\delta}(d\eta)\mu_\varepsilon(d\xi)\bigg\}\qquad\nonumber\\
&&\le\bigg(\mathbb{E}\exp\bigg\{ t_N \int_{\mathrm{R}^{d+1}}\left|\frac{1}{t_N}\int_0^{t_N}e^{i(\eta\frac{st}{t_N}+\xi\cdot B(s))}ds\right|^2\mu_{0,\delta}(d\eta)\mu_\varepsilon(d\xi)\bigg\}\bigg)^{N}.\label{2020-05-19 17:43:25}
\end{eqnarray}
For all $k>0$, let $\mathbb{P}^k$ and $\mathbb{E}^k$ respectively be the probability measure and expectation, such that the stochastic process $B(\cdot)$ is the OU process with infinitesimal generator $\frac{1}{2}\triangle-kx\cdot\nabla$ on probability space $(\Omega, \mathscr{F},\mathbb{P}^k)$.
Furthermore, we claim that for all $k>0$,
\begin{eqnarray}
&&\mathbb{E}\exp\bigg\{\frac{1}{t_N}
\int_0^{t_N}\int_0^{t_N}\gamma_{0,\delta}(\frac{s-r}{t_N}t)
\gamma_{\varepsilon}(B(s)-B(r))dsdr\bigg\}\qquad\nonumber\\
&&\le\mathbb{E}^k\exp\bigg\{\frac{1}{t_N}
\int_0^{t_N}\int_0^{t_N}\gamma_{0,\delta}(\frac{s-r}{t_N}t)
\gamma_{\varepsilon}(B(s)-B(r))dsdr\bigg\},\label{2019-12-3 19:27:16}
\end{eqnarray}
Indeed, by Taylor expansion, we only need to show that for all positive integer $n$, the $n$-order moment
\begin{eqnarray*}
&&\mathbb{E}\left[
\int_0^{t_N}\int_0^{t_N}\gamma_{0,\delta}(\frac{s-r}{t_N}t)
\gamma_{\varepsilon}(B(s)-B(r))dsdr\right]^n\qquad\nonumber\\
&&\le\mathbb{E}^k\left[
\int_0^{t_N}\int_0^{t_N}\gamma_{0,\delta}(\frac{s-r}{t_N}t)
\gamma_{\varepsilon}(B(s)-B(r))dsdr\right]^n.
\end{eqnarray*}
 Furthermore, by Bochner representation, we want to explain that
\begin{eqnarray*}
&&\int\int_{[0,t_N]^{2n}}\prod\limits_{j=1}^n\gamma_{0,\delta}(\frac{s_j-r_j}{t_N}t)
\int_{\mathbb{R}^{dn}} \mathbb{E}e^{i\sum_{j=1}^n\xi_j\cdot(B(s_j)-B(r_j))}
\prod\limits_{j=1}^n\mu_\varepsilon(d\xi_j)ds_jdr_j\qquad\nonumber\\
&&\le
\int\int_{[0,t_N]^{2n}}\prod\limits_{j=1}^n\gamma_{0,\delta}(\frac{s_j-r_j}{t_N}t)
\int_{\mathbb{R}^{dn}} \mathbb{E}^ke^{i\sum_{j=1}^n\xi_j\cdot(B(s_j)-B(r_j))}
\prod\limits_{j=1}^n\mu_\varepsilon(d\xi_j)ds_jdr_j.
\end{eqnarray*}
One hand, notice that $\gamma_{0,\delta}\ge0$. On the other hand, the above relation is from the fact that
\begin{align*}
  \qquad\qquad\qquad\mathbb{E}e^{i\sum_{j=1}^n\xi_j\cdot(B(s_j)-B(r_j))}\le \mathbb{E}^ke^{i\sum_{j=1}^n\xi_j\cdot(B(s_j)-B(r_j))}, \qquad \forall(\xi_1,\cdots,\xi_n)\in\mathbb{R}^{dn}.
\end{align*}
which is equivalent to that for all $t>0$ and $f\in L^2([0,t])$,
\begin{align}
 \mathbb{E}e^{i\int_0^tf(s)dB(s)}\le \mathbb{E}^ke^{i\int_0^tf(s)dB(s)}.\label{2020-5-20 20:10:24}
\end{align}
Indeed, let
$
F(r):=\int_r^tf(s)e^{-k(s-r)}ds
$ ($r\in[0,t]$), and by OU process $\int_0^te^{-k(t-s)}dB(s)$, we compute
\begin{eqnarray*}
\mathbb{E}^ke^{i\int_0^tf(s)dB(s)}
=\exp\bigg\{-\frac{k^2}{2}\int_0^tF^2(r)
dr+k\int_0^tf(r)F(r)dr-\frac{1}{2}\int_0^tf^2(s)ds\bigg\}.
\end{eqnarray*}
The above \eqref{2020-5-20 20:10:24} can be proved by the argument that
because $F(t)=0$ and $k>0$, it holds that  
\begin{align*}
\int_0^t(k^2F^2(r)-2kf(r)F(r)) dr&=\int_0^t((F^2(r))'-kF^2(r)) dr\\
&=-kF^2(0)-k^2\int_0^tF^2(r)dr\le0.
\end{align*}


After it, we continue to prove \eqref{2020-3-22 19:40:29}. For all $M>0$, let $(1+d)$-dimensional ball $B(0,M):=\{(\eta,\xi)\in\mathbb{R}^{d+1}; |\eta|^2+|\xi|^2\le M^2\}$, and
by \eqref{2020-05-19 17:43:25} and \eqref{2019-12-3 19:27:16}, we have
\begin{align*}
&\limsup\limits_{N\rightarrow\infty}
\frac{1}{Nt_N}
\log\mathbb{E}\exp\left\{Q_N(1,\gamma_{0,\delta},\gamma_\varepsilon)\right\}\\
&\le\limsup\limits_{N\rightarrow\infty}
\frac{1}{t_N}
\log\mathbb{E}^k\exp\bigg\{\frac{1}{t_N}
\int_0^{t_N}\int_0^{t_N}\gamma_{0,\delta}(\frac{s-r}{t_N}t)
\gamma_{\varepsilon}(B(s)-B(r))dsdr\bigg\}\\
&\le
\limsup\limits_{M\rightarrow\infty}\limsup\limits_{N\rightarrow\infty}
\frac{1}{t_N}
\log\mathbb{E}^k\exp\bigg\{ t_N \int_{B(0,M)}\left|\frac{1}{t_N}\int_0^{t_N}e^{i(\eta\frac{st}{t_N}+\xi\cdot B(s))}ds\right|^2\mu_{0,\delta}(d\eta)\mu_\varepsilon(d\xi)\bigg\}\nonumber\\
&+\limsup\limits_{M\rightarrow\infty}\left\{\mu_{0,\delta}(\mathbb{R})
\mu_\varepsilon(|\xi|\ge M)+\mu_{0,\delta}(|\eta|>M)
\mu_\varepsilon(\mathbb{R}^{d})\right\}\nonumber\\
&\le\limsup\limits_{M\rightarrow\infty}\limsup\limits_{N\rightarrow\infty}
\frac{1}{t_N}
\log\mathbb{E}^k\exp\bigg\{ t_N \int_{B(0,M)}\left|F_{N,\omega}(\eta,\xi)\right|^2\mu_{0,\delta}(d\eta)\mu_\varepsilon(d\xi)\bigg\},
\end{align*}
where the complex-valued stochastic process
$F_{N,\omega}(\eta,\xi):=\frac{1}{t_N}\int_0^{t_N}e^{i(\eta\frac{st}{t_N}+\xi\cdot B(s))}ds$ on $\mathbb{R}^{d+1}$.
Besides it, let Hilbert space $\mathcal{H}_\varepsilon$ be a subspace of $L^2(B(0,M);\mu_{0,\delta}(d\eta)\mu_\varepsilon(d\xi))$ consisting of the complex-valued function satisfying that $f(-\eta,-\xi)=\overline{f(\eta,\xi)}$, with the inner product
$$\langle f,g\rangle_{\mathcal{H}_\varepsilon}:=\int_{B(0,M)}f(\eta,\xi)\overline{g(\eta,\xi)}
 \mu_{0,\delta}(d\eta)
\mu_\varepsilon(d\xi)\qquad\forall f,g\in\mathcal{H}_\varepsilon.$$
So, to prove \eqref{2020-3-22 19:40:29}, we need to show that for all $\delta,\varepsilon, M>0$,
\begin{eqnarray}
\limsup\limits_{N\rightarrow\infty}
\frac{1}{t_N}
\log\mathbb{E}^k\exp\{ t_N \|F_{N,\omega}(\eta,\xi)\|_{\mathcal{H}_\varepsilon}^2 \} 
\le \mathcal{E}_t.\label{2019-12-3 21:49:41}
\end{eqnarray}
To make the set of functions $\{F_{N,\omega}(\eta,\xi)\}_{\omega\in\Omega}$ be compact, let the set in probability space $(\Omega, \mathscr{F})$
\begin{eqnarray*}
&\Omega_{N,L}:=\left\{\omega\in\Omega;\frac{1}{t_N}\int_0^{t_N}|B(s)|ds\le L\right\}\qquad\forall L>0,
  \end{eqnarray*}
then 
\eqref{2019-12-3 21:49:41}
can be divided into the following two parts about $\Omega_{N,L}$ and its complementary set.
\begin{align}
\mathbb{E}^k\exp\{ t_N \|F_{N,\omega}(\eta,\xi)\|_{\mathcal{H}_\varepsilon}^2 \}
&=\mathbb{E}^k\left[\exp\{ t_N \|F_{N,\omega}(\eta,\xi)\|_{\mathcal{H}_\varepsilon}^2 \}
\mathbf{1}_{\Omega_{N,L}^c}\right]\label{2020-5-21 21:26:53}\\
&+\mathbb{E}^k\left[\exp\{ t_N \|F_{N,\omega}(\eta,\xi)\|_{\mathcal{H}_\varepsilon}^2 \}\mathbf{1}_{\Omega_{N,L}}\right].\label{2019-12-3 22:39:55}
\end{align}
First, we estimate \eqref{2020-5-21 21:26:53}.
By Girsanov transform and It\^{o} integration by parts, we obtain that the Radon derivative limited in $[0,t_N]$ satisfies
\begin{eqnarray}
&
\left.\frac{d\mathbb{P}^k}{d\mathbb{P}}\right|_{[0,t_N]}
=\exp\left\{-\frac{k}{2}|B(t_N)|^2+\frac{dkt_N}{2}-\frac{k^2}{2}\int_0^{t_N}|B(s)|^2ds \right\}.\label{2019-12-3 21:32:28}
\end{eqnarray}
Furthermore, 
by Chebyshev inequality and the inequality $x-\frac{k^2}{2}x^2\le\frac{1}{2k^2}$, we have
\begin{eqnarray}
&\qquad\qquad\quad\mathbb{P}^k(\Omega_{N,L}^c)
\le e^{-Lt_N+\frac{dk}{2}t_N}\mathbb{E}\exp\left\{\int_0^{t_N}\left(|B(s)|
-\frac{k^2}{2}|B(s)|^2\right)ds \right\}\nonumber\\
&\le\exp\left\{-Lt_N+\frac{dkt_N}{2}+\frac{t_N}
{2k^2}\right\}.\label{2020-5-21 22:12:46}
\end{eqnarray}
Moreover, for all positive integer $N$, by 
$
F_{N,\omega}(\eta,\xi)\le 1\label{2019-12-3 22:48:16}
$ and
 \eqref{2020-5-21 22:12:46},  
 we have
\begin{eqnarray}
\limsup\limits_{N\rightarrow\infty}\frac{1}{t_N}\log\mathbb{E}^k\left[\exp\{ t_N \|F_{N,\omega}(\eta,\xi)\|_{\mathcal{H}_\varepsilon}^2 \}
\mathbf{1}_{\Omega_{N,L}^c}\right]
\le \mu_{0,\delta}(\mathrm{R})\mu_\varepsilon(\mathrm{R}^{d}) +\frac{dk}{2}+\frac{1}{2k^2}-L.\label{2019-12-3 23:36:56}
\end{eqnarray}
Second, we give an estimation for \eqref{2019-12-3 22:39:55}. By the inequality $|1-e^{ix}|\le|x|$, we obtain that the family of functions $\{F_{N,\omega}(\eta,\xi)\}_{\omega\in\Omega_{N,L}}$ is equicontinuous on $B(0,M)$, that is, for all $(\eta_1,\xi_1),(\eta_2,\xi_2)\in B(0,M)$, 
\begin{align*}
\quad\quad\qquad|F_{N,\omega}(\eta_1,\xi_1)-F_{N,\omega}(\eta_2,\xi_2)|
&\le t|\eta_1-\eta_2|+\frac{1}{t_N}\int_0^{t_N}|B(s)|ds|\xi_1-\xi_2|\qquad\mbox{on}~\Omega_{N,L}.
\end{align*}
So, for all $N$, the set of functions $\{F_{N,\omega}(\eta,\xi)\}_{\omega\in\Omega_{N,L}}$ is included in the set of functions
$$\mathcal{C}:=\{f(\cdot)\in\mathcal{H}_\varepsilon; |f(x)|\le1, |f(x_1)-f(x_2)|\le 2(t+L)|x_1-x_2|, \forall x, x_1,x_2\in B(0,M)\}.$$
By Arzel\'{a}-Ascoli theorem, the $\mathcal{C}$ is a relatively compact set in the space $C(B(0,M))$ composed of continuous function on $B(0,M)$. Because the maximum norm in $C(B(0,M))$ is stronger than the norm in $\mathcal{H}_\varepsilon$, the $\mathcal{C}$ is still a relatively compact set in $\mathcal{H}_\varepsilon$.  For all fixed $\hat{\varepsilon}>0$, let the open set
\begin{eqnarray*}
\mathcal{O}_f:=\{g\in\mathcal{H}_\varepsilon;\|g\|_{\mathcal{H}_\varepsilon}^2<-\|f\|_{\mathcal{H}_\varepsilon}^2
+2\langle f, g\rangle_{\mathcal{H}_\varepsilon}+\hat{\varepsilon}\}\qquad\forall f\in\mathcal{H}_\varepsilon,
\end{eqnarray*}
then there exist the finite number of $f_1,\cdots,f_m$ in $\mathcal{H}_\varepsilon$ such that the $\mathcal{C}$ is covered by the union of $\mathcal{O}_{f_1},\cdots,\mathcal{O}_{f_m}$. Moreover,  
let the smooth function $\bar{f}_j(s,x)$ as 
  \begin{eqnarray*}
\qquad\qquad&\bar{f}_j(s,x):=\int_{B(0,M)}f_j(\eta,\xi)
e^{-i(\eta s+\xi\cdot x)}
 \mu_{0,\delta}(d\eta)
\mu_\varepsilon(d\xi),\qquad\forall 1\le j\le m.\label{2019-12-4 16:40:17}
\end{eqnarray*}
Hence, by Fubini theorem and Girsanov transform in \eqref{2019-12-3 21:32:28} for every $\langle f_j, F_{N,\omega}\rangle_{\mathcal{H}_\varepsilon}$, we have
\begin{eqnarray}
&&\limsup\limits_{N\rightarrow\infty}
\frac{1}{t_N}
\log\mathbb{E}^k\left[\exp\{ t_N \|F_{N,\omega}(\eta,\xi)\|_{\mathcal{H}_\varepsilon}^2 \}\mathbf{1}_{\Omega_{N,L}}\right]\nonumber\\
&&\le\max_{1\le j\le m}\bigg\{\frac{dk}{2}+\hat{\varepsilon}- \|f_j\|_{\mathcal{H}_\varepsilon}^2 + \limsup\limits_{N\rightarrow\infty}
\frac{1}{t_N}
\log\mathbb{E}\exp\bigg\{2\int_0^{t_N}\bar{f}_j
(\frac{st}{t_N},B(s))
ds \bigg\} \bigg\}
\nonumber\\
&&\le\frac{dk}{2}+ \hat{\varepsilon}+\sup\limits_{g\in\mathcal{A}_d}\bigg\{\max_{1\le j\le m}\bigg\{- \|f_j\|_{\mathcal{H}_\varepsilon}^2 +2 \Big\langle \overline{f_j(\eta,\xi)}, \int_0^1
e^{i\eta st}
\mathcal{F}g^2(s,\cdot)(\xi)ds  \Big\rangle_{\mathcal{H}_\varepsilon}\bigg\}\nonumber\\
&&\le\frac{dk}{2}+ \hat{\varepsilon}+\mathcal{E}_t.\label{2020-9-10 22:19:00}
\end{eqnarray}
 Here, 
  we use Proposition 3.1 in \cite{ChenHu}, the inequalities  $\|g\|_{\mathcal{H}_\varepsilon}^2\ge-
  \|f\|_{\mathcal{H}_\varepsilon}^2
+2\langle f, g\rangle_{\mathcal{H}_\varepsilon}$ and $|\mathcal{F}l_\delta(\eta)|\le1$.
By \eqref{2019-12-3 22:39:55}, \eqref{2019-12-3 23:36:56} and \eqref{2020-9-10 22:19:00}, we have
\begin{eqnarray*}
&&\limsup\limits_{N\rightarrow\infty}
\frac{1}{t_N}
\log\mathbb{E}^k\exp\{ t_N \|F_{N,\omega}(\eta,\xi)\|_{\mathcal{H}_\varepsilon}^2 \}\nonumber\\
&&\le \max\left\{ \mu_{0,\delta}(\mathrm{R})\mu_\varepsilon(\mathrm{R}^{d}) +\frac{dk}{2}+\frac{1}{2k^2}-L, \frac{dk}{2}+ \hat{\varepsilon}+\mathcal{E}_t\right\}.
\end{eqnarray*}
Here, let $L\rightarrow\infty$, $k\rightarrow0$ and $\hat{\varepsilon}\rightarrow0$ in sequence, then we can complete the proof of \eqref{2019-12-3 21:49:41}. Moreover, by the fact that $\mathcal{E}_t(p)\rightarrow\mathcal{E}_t$ as $p\rightarrow1$, we can obtain \eqref{2020-3-22 19:40:29} and complete the whole proof of the upper bound.

\emph{Step4.}  In the end, one hand, let $\theta=1$ and take $|\theta'|\gamma$ instead of $\gamma$ in \eqref{2019-12-7 11:39:00}, on the other hand, we also use \eqref{2019-12-7 11:39:00} when $\theta=|\theta'|^{\frac{1}{2}}$, then for all $\theta'\neq0$, it holds that
$
\mathcal{E}_t(|\theta'|)
=|\theta'|^{\frac{2}{2-\alpha}}\mathcal{E}_t.
$ 
\end{proof}

The following corollary will be applied to the proof of Theorem \ref{2019-12-9 21:10:27}.

\begin{corollary}\label{2020-6-19 15:33:57}           
Assume that  condition (H-\ref{2020-4-29 12:23:47}) and condition (H-\ref{2020-4-29 12:23:35}) hold.
For all $\theta\neq0$ and $t>0$, it holds that
\begin{eqnarray}
&&\lim\limits_{N\rightarrow\infty}\frac{1}{N^{\frac{4-\alpha}{2-\alpha}}}
\log\mathbb{E}\exp\bigg\{\theta N^{\frac{4-\alpha}{2(2-\alpha)}}\bigg(\sum\limits_{j,k=1}^N
\int_0^t\int_0^t\gamma_{0}(s-r)\gamma(B_j(s)-B_k(r))dsdr\bigg)^{\frac{1}{2}}\bigg\}\nonumber\\
&&= 2^{-\frac{6}{4-\alpha}}(4-\alpha)(2-\alpha)^{-\frac{2-\alpha}{4-\alpha}}\theta^{\frac{4}{4-\alpha}}t
\mathcal{E}_t^{\frac{2-\alpha}{4-\alpha}}.\label{2020-6-19 15:36:24}
\end{eqnarray}
\end{corollary}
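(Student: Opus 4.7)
My plan is to derive the corollary from Proposition~\ref{2019-12-9 15:24:16} by a Laplace/Fenchel argument. Set
\[
X_N:=\sum_{j,k=1}^N\int_0^t\int_0^t\gamma_0(s-r)\gamma(B_j(s)-B_k(r))\,ds\,dr,\qquad a_N:=N^{(4-\alpha)/(2-\alpha)},
\]
and note the scaling identity $N^{(4-\alpha)/(2(2-\alpha))}=\sqrt{a_N}$, so the exponent in \eqref{2020-6-19 15:36:24} reads $\theta\sqrt{a_N X_N}$. Substituting $\theta'=\sqrt{2\mu}$ in \eqref{2019-12-7 11:39:00}, Proposition~\ref{2019-12-9 15:24:16} identifies
\[
\phi(\mu):=\lim_{N\to\infty}\frac{1}{a_N}\log\mathbb{E}e^{\mu X_N}=\mu^{2/(2-\alpha)}\,t^{(4-\alpha)/(2-\alpha)}\mathcal{E}_t,\qquad \mu>0.
\]
The plan is to prove matching upper and lower bounds with common limit $\psi(\theta):=\inf_{\mu>0}\{\phi(\mu)+\theta^2/(4\mu)\}$ and then evaluate $\psi(\theta)$ in closed form.

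For the upper bound I apply Young's inequality $2uv\le u^2+v^2$ with $u=\sqrt{\mu X_N}$ and $v=\theta\sqrt{a_N/(4\mu)}$, which yields
\[
\theta\sqrt{a_N X_N}\le \mu X_N+\frac{\theta^2 a_N}{4\mu}\qquad(\mu>0).
\]
Exponentiating, taking expectations, dividing by $a_N$, and letting $N\to\infty$ via Proposition~\ref{2019-12-9 15:24:16} gives $\limsup_N a_N^{-1}\log\mathbb{E}\exp\{\theta\sqrt{a_N X_N}\}\le \phi(\mu)+\theta^2/(4\mu)$ for every $\mu>0$, and taking the infimum produces $\limsup\le\psi(\theta)$.

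The lower bound is the more delicate step. I will invoke the large-deviation lower bound for $X_N/a_N$ derived from the existence of $\phi$: for every $y>0$,
\[
\liminf_{N\to\infty}\frac{1}{a_N}\log\mathbb{P}(X_N/a_N\ge y)\ge -I(y),\qquad I(y):=\sup_{\mu>0}\{\mu y-\phi(\mu)\},
\]
obtained by an exponential-tilting/Cram\'er argument (define a tilted law by the density $e^{\mu X_N}/\mathbb{E}e^{\mu X_N}$, choose $\mu>0$ so that $\phi'(\mu)=y$). Combined with the pointwise estimate $\mathbb{E}\exp\{\theta\sqrt{a_N X_N}\}\ge e^{\theta a_N\sqrt y}\mathbb{P}(X_N/a_N\ge y)$, this gives $\liminf_N a_N^{-1}\log\mathbb{E}\exp\{\theta\sqrt{a_N X_N}\}\ge \theta\sqrt y-I(y)$ for every $y>0$. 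Interchanging the resulting $\sup_{y>0}$ with the $\inf_{\mu>0}$ appearing in $I$ via Sion's minimax theorem (valid because $\theta\sqrt y-\mu y+\phi(\mu)$ is concave in $y>0$ and convex in $\mu>0$, with $\sup_{y>0}\{\theta\sqrt y-\mu y\}=\theta^2/(4\mu)$) identifies this supremum with $\psi(\theta)$, matching the upper bound.

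The main obstacle is justifying the LDP lower bound despite $\phi$ not being essentially smooth at the origin (since $X_N\ge 0$ forces $\phi\equiv 0$ on $(-\infty,0]$, while $\phi$ is smooth and strictly convex on $(0,\infty)$). Because only strictly positive levels $y>0$ need be considered, and the corresponding $I(y)$ is attained at some $\mu>0$, the tilted-measure argument is entirely confined to $\mu>0$ and the global essential-smoothness hypothesis of Gärtner--Ellis is not required. The remaining step is the closed-form optimisation of $\psi(\theta)$: the first-order condition $\tfrac{2}{2-\alpha}\mu^{2/(2-\alpha)-1}t^{(4-\alpha)/(2-\alpha)}\mathcal{E}_t=\theta^2/(4\mu^2)$ gives $\mu^{(4-\alpha)/(2-\alpha)}=\theta^2(2-\alpha)/(8t^{(4-\alpha)/(2-\alpha)}\mathcal{E}_t)$, and substituting this back collapses $\psi(\theta)$ to the explicit constant $2^{-6/(4-\alpha)}(4-\alpha)(2-\alpha)^{-(2-\alpha)/(4-\alpha)}\theta^{4/(4-\alpha)}t\mathcal{E}_t^{(2-\alpha)/(4-\alpha)}$ stated in \eqref{2020-6-19 15:36:24}.
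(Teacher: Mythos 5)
Your proposal is correct and, at the conceptual level, follows the same route as the paper: both transfer the Laplace asymptotics of $X_N$ (Proposition \ref{2019-12-9 15:24:16}) to those of $\sqrt{X_N}$ through a large-deviation/variational duality, and both end with the same optimisation (your $\inf_{\mu>0}\{\phi(\mu)+\theta^2/(4\mu)\}$ is the dual form of the paper's $\sup_{\beta>0}\{\theta\beta-I(\beta^2)\}$, and your closed-form constant is right). The difference is in the mechanics: the paper cites two black boxes — the large-deviation theorem for nonnegative random variables (Theorem 1.2.4 in \cite{R2}) to get the tail asymptotics of $(G_\theta(N))^{1/2}$, and Varadhan's integral lemma to come back to exponential moments — whereas you unpack both. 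Your Young-inequality upper bound is a nice simplification: it avoids the tail LDP and Varadhan's moment/tail condition altogether (that condition does hold here since $\phi(\mu)<\infty$ for all $\mu$, but your route does not need to check it). Your lower bound, on the other hand, is exactly the content of the cited Theorem 1.2.4: the tilting/Gärtner--Ellis argument you sketch does go through (indeed $\phi$ is finite and differentiable on all of $\mathbb{R}$, with $\phi\equiv0$ on $(-\infty,0]$ because $\mathbb{E}X_N=O(N^2)=o(a_N)$, so even essential smoothness is not an issue), but writing out the concentration of $X_N/a_N$ under the tilted law would essentially reprove that theorem, so citing it is the economical choice. Two small polish points: the minimax step is most cleanly closed not by Sion (whose compactness hypothesis you would have to arrange) but by exhibiting the saddle point directly — taking $y^*=\theta^2/(4\mu^{*2})$ with $\mu^*$ the minimiser of $\psi$ gives $\theta\sqrt{y^*}-I(y^*)=\psi(\theta)$, which is all you need; and, like the paper's own proof, your argument implicitly takes $\theta>0$, which is how the statement should be read.
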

\begin{proof}
  For all $\theta> 0$, denote the nonnegative stochastic process by
\begin{eqnarray*}
G_\theta(N):=\frac{\theta^2}{N^{\frac{4-\alpha}{2-\alpha}}}\sum\limits_{j,k=1}^N
\int_0^t\int_0^t\gamma_{0}(s-r)\gamma(B_j(s)-B_k(r))dsdr.
\end{eqnarray*}
For all $\beta>0$, by large deviation for nonnegative random variable (Theorem 1.2.4. in \cite{R2}) and Proposition \ref{2019-12-9 15:24:16}, we have
\begin{align*}
\lim\limits_{N\rightarrow\infty}\frac{1}{N^{\frac{4-\alpha}{2-\alpha}}}
\log\mathbb{P}((G_\theta(N))^{\frac{1}{2}}\ge\beta)&=-\sup\limits_{\bar{\theta}>0}\left\{\bar{\theta}\beta^{2}
-\bar{\theta}^{\frac{2}{2-\alpha}}\theta^{\frac{4}{2-\alpha}}t^{\frac{4-\alpha}{2-\alpha}}
\mathcal{E}_t\right\}\nonumber\\
&=-\frac{\alpha}{2}(\frac{2-\alpha}{2})^{\frac{2-\alpha}{\alpha}}\beta^{\frac{4}{\alpha}}
\theta^{-\frac{4}{\alpha}}t^{-\frac{4-\alpha}{\alpha}}\mathcal{E}_t^{-\frac{2-\alpha}{\alpha}}.
\end{align*}
Furthermore, by Varadhan's integral lemma (e.g. Theorem 1.1.6. in \cite{R2}), we have
\begin{align*}
\lim\limits_{N\rightarrow\infty}\frac{1}{N^{\frac{4-\alpha}{2-\alpha}}}
\log\mathbb{E}\exp\bigg\{N^{\frac{4-\alpha}{2-\alpha}}(G_\theta(N))^{\frac{1}{2}}\bigg\}
&=\sup\limits_{\beta>0}\left\{\beta-\frac{\alpha}{2}(\frac{2-\alpha}{2})^{\frac{2-\alpha}{\alpha}}\beta^{\frac{4}{\alpha}}
\theta^{-\frac{4}{\alpha}}t^{-\frac{4-\alpha}{\alpha}}\mathcal{E}_t^{-\frac{2-\alpha}{\alpha}}\right\}\\
&=2^{-\frac{6}{4-\alpha}}(4-\alpha)(2-\alpha)^{-\frac{2-\alpha}{4-\alpha}}\theta^{\frac{4}{4-\alpha}}t
\mathcal{E}_t^{\frac{2-\alpha}{4-\alpha}}.
\end{align*}
\end{proof}
  In the rest of this paper, we will also consider the condition
\begin{align}
\limsup\limits_{R\rightarrow\infty}\frac{\max\limits_{|x|\le R}\log p_t\ast u_0(x)}{(\log R)^{\frac{2}{4-\alpha}}}\le0,\qquad\forall t>0.\label{2020-9-7 14:58:37}
\end{align}
Here, notice that $u_0$ in case \eqref{2020-9-7 15:16:57} and case \eqref{2020-9-17 13:28:20} satisfies the above condition.
The following result is also the upper estimation of High moment asymptotics. 
\begin{theorem}\label{2020-8-29 16:47:39}           
Under condition \eqref{2020-9-7 14:58:37}, condition (H-\ref{2020-4-29 12:23:47}) and condition (H-\ref{2020-4-29 12:23:35}), 
for all $\theta\neq0$, $\beta>0$ and $t>0$, the solution $u_\theta(t,x)$ satisfies that
\begin{eqnarray}
\limsup\limits_{N\rightarrow\infty}\frac{1}{N^{\frac{4-\alpha}{2-\alpha}}}
\log\max\limits_{|x|\le \exp\{\beta N^{\frac{4-\alpha}{2-\alpha}}\}}\mathbb{E} u_\theta^N(t,x)
\le 2^{-\frac{2}{2-\alpha}}|\theta|^{\frac{4}{2-\alpha}}t^{\frac{4-\alpha}{2-\alpha}}
\mathcal{E}_t\label{2020-8-29 17:01:32}.
\end{eqnarray}
\end{theorem}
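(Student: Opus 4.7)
The plan is to reduce $\mathbb{E}u_\theta^N(t,x)$ to the moment functional already controlled by Proposition \ref{2019-12-9 15:24:16}, pulling the initial-data factor $(p_t\ast u_0(x))^N$ outside so that the only $x$-dependence remaining sits in this deterministic prefactor, which the condition \eqref{2020-9-7 14:58:37} will render negligible. Starting from the moment representation \eqref{2019-12-12 17:11:16}, I rewrite the shift term as $\frac{s}{t}y_j - \frac{r}{t}y_k - \frac{s-r}{t}x = \frac{s}{t}(y_j-x) - \frac{r}{t}(y_k-x)$. Inserting $z_j := y_j - x$ and $z_0 := 0$ into inequality \eqref{2020-9-3 09:42:37}, the Gaussian-bridge exponential is dominated, uniformly in $(y_1,\dots,y_N)$, by the unshifted version. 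Integrating against $\prod p_t(y_j-x)u_0(dy_j)$ gives
\begin{align*}
\mathbb{E}u_\theta^N(t,x) \le (p_t\ast u_0(x))^N \; \mathbb{E}\exp\bigl\{J_N^{(\theta)}\bigl(t,B_{j,0,t}(s)-B_{k,0,t}(r)\bigr)\bigr\}.
\end{align*}

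Next, expanding $J_N^{(\theta)}$ by Bochner representation in both the $\gamma_0$ and $\gamma$ variables turns the last expectation into $\mathbb{E}\exp\{Q_N^{(\theta)}(t,B_{j,0,t}(s))\}$ in the notation of \eqref{2020-9-23 14:50:59}. The bridge-to-motion comparison \eqref{2020-9-23 14:12:15}, which combines Jensen and Cauchy inequalities, the time-reversal identity $B_{0,t}(\cdot)\stackrel{d}{=}B_{0,t}(t-\cdot)$, and Girsanov's theorem, then yields $\mathbb{E}\exp\{Q_N^{(\theta)}(t,B_{j,0,t}(s))\} \le \mathbb{E}\exp\{Q_N^{(\theta)}(t,B_j(s))\}$. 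Unwinding the Bochner identification on the right, this is precisely the functional appearing in Proposition \ref{2019-12-9 15:24:16}, so
\begin{align*}
\lim\limits_{N\rightarrow\infty}\frac{1}{N^{\frac{4-\alpha}{2-\alpha}}}\log \mathbb{E}\exp\{Q_N^{(\theta)}(t,B_j(s))\} = 2^{-\frac{2}{2-\alpha}}|\theta|^{\frac{4}{2-\alpha}}t^{\frac{4-\alpha}{2-\alpha}}\mathcal{E}_t.
\end{align*}

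It remains to control the prefactor uniformly on $|x|\le R_N:=\exp\{\beta N^{(4-\alpha)/(2-\alpha)}\}$. By condition \eqref{2020-9-7 14:58:37}, for every $\epsilon>0$ and $N$ sufficiently large,
\begin{align*}
\max\limits_{|x|\le R_N}\log p_t\ast u_0(x) \le \epsilon (\log R_N)^{\frac{2}{4-\alpha}} = \epsilon\, \beta^{\frac{2}{4-\alpha}} N^{\frac{2}{2-\alpha}},
\end{align*}
so that $N\max_{|x|\le R_N}\log p_t\ast u_0(x) = o(N^{1+\frac{2}{2-\alpha}}) = o(N^{\frac{4-\alpha}{2-\alpha}})$, contributing nothing in the limit. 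Adding the two estimates gives \eqref{2020-8-29 17:01:32}. The only subtlety is a bookkeeping one: ensuring that the inequality \eqref{2020-9-3 09:42:37} is applied with the correct choice of shifts $z_j = y_j - x$ (which is why the $x$-dependence disappears from the random factor entirely and localizes into the pure convolution factor $(p_t\ast u_0)^N$); the analytical machinery—the bridge/Brownian motion comparison and the variational high-moment asymptotics—is already developed in the preceding sections, so no new large-deviations input is needed.
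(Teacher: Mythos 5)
Your reduction to the unshifted bridge functional via \eqref{2020-9-3 09:42:37} and your treatment of the prefactor $(p_t\ast u_0(x))^N$ under condition \eqref{2020-9-7 14:58:37} match the paper. The gap is the step $\mathbb{E}\exp\{Q_N^{(\theta)}(t,B_{j,0,t}(s))\}\le\mathbb{E}\exp\{Q_N^{(\theta)}(t,B_j(s))\}$, which you quote from \eqref{2020-9-23 14:12:15} as if it were a loss-free comparison on the whole interval $[0,t]$. It is not: the paper itself flags that chain as only ``formal'', and its rigorous content is a Cauchy--Schwarz split at $t/2$, a time reversal, and a Girsanov comparison of the bridge with Brownian motion on the half interval, which costs a change in the coupling constant, a shrunken time horizon, and a Radon--Nikodym factor of order $2^{dN/2}$. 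Those losses are harmless for the finiteness claimed in Proposition \ref{2020-5-1 20:49:40}, but they destroy the sharp constant needed in \eqref{2020-8-29 17:01:32}. Worse, the clean inequality cannot be repaired on the full interval: the law of $(B_{0,t}(s))_{0\le s\le t}$ is mutually singular with Wiener measure at the terminal time, so no bounded Girsanov density exists on all of $[0,t]$; and in the moment expansion every term is actually \emph{larger} for the bridge, since for any configuration $\mathrm{Var}\big(\sum_l\xi_l\cdot(B_{0,t}(s_l)-B_{0,t}(r_l))\big)=\mathrm{Var}\big(\sum_l\xi_l\cdot(B(s_l)-B(r_l))\big)-\frac{1}{t}\big|\sum_l\xi_l(s_l-r_l)\big|^2$, so the nonnegative quantities $\mathbb{E}\prod_le^{i\xi_l\cdot(\cdot)}$ are larger for the bridge and the inequality you need is, if anything, reversed.

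This is precisely why the paper's proof of Theorem \ref{2020-8-29 16:47:39} does not invoke \eqref{2020-9-23 14:12:15} directly but introduces $\rho\in(0,1)$ and H\"older exponents $p,q$ with $\frac1p+\frac1q=1$: the time interval is split at $\rho t$, the tail piece is time-reversed, the bridge is replaced by Brownian motion only on the strict sub-intervals $[0,\rho t]$ and $[0,(1-\rho)t]$ (where the Girsanov step applies with an $e^{O(N)}$ cost, negligible at scale $N^{\frac{4-\alpha}{2-\alpha}}$), Proposition \ref{2019-12-9 15:24:16} is applied to each factor, and finally one lets $\rho\to1$ and $p\to1$, using $t^{\frac{4-\alpha}{2-\alpha}}\mathcal{E}_t\to0$ as $t\to0$ so that the $[0,(1-\rho)t]$ contribution vanishes. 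Your argument needs this limiting device (or an equivalent one) inserted in place of the single-line bridge-to-motion comparison; the remaining steps, namely pulling out $(p_t\ast u_0)^N$ and noting that $N\max_{|x|\le R_N}\log p_t\ast u_0(x)\le\epsilon\,\beta^{\frac{2}{4-\alpha}}N^{\frac{4-\alpha}{2-\alpha}}$ eventually for every $\epsilon>0$, are fine.
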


\begin{proof}
Recall that the notations $Q_n^{(\theta)}(t,f(j,s))$ and $J_{n}^{(\theta)}(t,f(j,k,s,r))$ defined in \eqref{2020-9-23 14:50:59}. 
For all $p,q>1$ satisfying $\frac{1}{p}+\frac{1}{q}=1$ and $\rho\in(0,1)$, and
 by Jensen inequality, we have
\begin{align*}
Q_N^{(\theta)}(t, B_j(s))\le \frac{1}{p}Q_N^{(\theta)}(\rho t, B_j(s))+\frac{1}{q}
Q_N^{(\theta)}((1-\rho) t, B_j(t-s)).
\end{align*}
Furthermore, by using H\"{o}lder inequality, 
 \eqref{2020-9-3 09:42:37} and
the similar computations to \eqref{2020-9-23 14:12:15}, we obtain that
\begin{align*}
\mathbb{E} u_\theta^N(t,x)
&\le \bigg(\max\limits_{|x|\le \exp\{\beta N^{\frac{4-\alpha}{2-\alpha}}\}} p_t\ast u_0(x)\bigg)^N
\bigg(\mathbb{E}  \exp\Big\{ J_{N}^{(\theta)}(\rho t,B_{j,0,t}(s)-B_{k,0,t}(r))\Big\}\bigg)^{\frac{1}{p}}\\
&\cdot\bigg(\mathbb{E}  \exp\Big\{ J_{N}^{(\theta)}((1-\rho) t ,B_{j,0,t}(s)-B_{k,0,t}(r))\Big\}\bigg)^{\frac{1}{q}}\\
&\le\bigg( \max\limits_{|x|\le \exp\{\beta N^{\frac{4-\alpha}{2-\alpha}}\}} p_t\ast u_0(x)\bigg)^N
\bigg(\mathbb{E}  \exp\Big\{ J_{N}^{(\theta)}(\rho t,B_{j}(s)-B_{k}(r))\Big\}\bigg)^{\frac{1}{p}}\\
&\cdot\bigg(\mathbb{E}  \exp\Big\{ J_{N}^{(\theta)}((1-\rho) t ,B_{j}(s)-B_{k}(r))\Big\}\bigg)^{\frac{1}{q}}.
\end{align*}
At last, by condition \eqref{2020-9-7 14:58:37} and Proposition \ref{2019-12-9 15:24:16}, we observe that \eqref{2020-8-29 17:01:32} is from the fact that
 \begin{align*}
&\limsup\limits_{N\rightarrow\infty}\frac{1}{N^{\frac{4-\alpha}{2-\alpha}}}
\log\max\limits_{|x|\le \exp\{\beta N^{\frac{4-\alpha}{2-\alpha}}\}}\mathbb{E} u_\theta^N(t,x)\\
&\le\limsup\limits_{p\rightarrow1}\limsup\limits_{\rho\rightarrow1}
2^{-\frac{2}{2-\alpha}}|\theta|^{\frac{4}{2-\alpha}}\left\{p^{-1}t^{\frac{4-\alpha}{2-\alpha}}
\mathcal{E}_t+q^{-1}((1-\rho)t)^{\frac{4-\alpha}{2-\alpha}}
\mathcal{E}_{(1-\rho)t}\right\}.
\end{align*}
\end{proof}


\section{The upper bound of spatial asymptotics}  \label{2020-5-24 12:16:39}

\setcounter{equation}{0}
\renewcommand\theequation{4.\arabic{equation}}


In this section, by the following Proposition \ref{2019-12-7 16:33:30} and  Proposition \ref{2019-12-6 23:32:02}, we prove the upper bound of spatial asymptotics under condition \eqref{2020-9-7 14:58:37}. Among the two propositions, Proposition \ref{2019-12-7 16:33:30} is the asymptotics of tail probability, which is obtained by using the high moment asymptotics in Section \ref{2020-5-16 22:13:56}.

\begin{proposition}\label{2019-12-7 16:33:30}
 Assume that condition \eqref{2020-9-7 14:58:37},  condition (H-\ref{2020-4-29 12:23:47}) and condition (H-\ref{2020-4-29 12:23:35}) hold,   
then for all $\bar{\alpha}>0$ and $\theta\neq0$, 
it holds  that
\begin{eqnarray}
&\limsup\limits_{b\rightarrow\infty}\frac{1}{b^{\frac{4-\alpha}{2}}}\log  \max\limits_{|x|\le\exp\{ b^{\frac{4-\alpha}{2}}\}} \mathbb{P}(\log u_\theta(t,x)\ge b\bar{\alpha})\nonumber\\
&\le-2|\theta|^{-2}t^{-\frac{4-\alpha}{2}}
\mathcal{E}_t^{-\frac{2-\alpha}{2}}(\frac{4-\alpha}{2-\alpha})^{-\frac{2-\alpha}{2}}
\frac{2}{4-\alpha}\bar{\alpha}^{\frac{4-\alpha}{2}}.\label{2020-8-29 19:12:47}
\end{eqnarray}
\end{proposition}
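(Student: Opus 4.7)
The proof is a Chebyshev-type argument coupled to the high moment estimate of Theorem \ref{2020-8-29 16:47:39}, with $N$ chosen to depend on $b$. The plan is as follows. First, for any positive integer $N$, Markov's inequality gives
\begin{align*}
\mathbb{P}(\log u_\theta(t,x)\ge b\bar{\alpha})=\mathbb{P}(u_\theta^N(t,x)\ge e^{Nb\bar{\alpha}})\le e^{-Nb\bar{\alpha}}\,\mathbb{E}u_\theta^N(t,x).
\end{align*}
Taking the maximum over $|x|\le\exp\{b^{(4-\alpha)/2}\}$ and then taking logarithms yields
\begin{align*}
\log\max_{|x|\le\exp\{b^{(4-\alpha)/2}\}}\mathbb{P}(\log u_\theta(t,x)\ge b\bar{\alpha})\le -Nb\bar{\alpha}+\log\max_{|x|\le\exp\{b^{(4-\alpha)/2}\}}\mathbb{E}u_\theta^N(t,x).
\end{align*}

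Second, I would couple $N$ and $b$ by setting $N=\lfloor c\,b^{(2-\alpha)/2}\rfloor$ for a free parameter $c>0$ to be optimised. With this scaling, $N^{(4-\alpha)/(2-\alpha)}\sim c^{(4-\alpha)/(2-\alpha)}b^{(4-\alpha)/2}$, so that the condition $|x|\le\exp\{b^{(4-\alpha)/2}\}$ fits into the regime $|x|\le\exp\{\beta N^{(4-\alpha)/(2-\alpha)}\}$ of Theorem \ref{2020-8-29 16:47:39} for any $\beta\ge c^{-(4-\alpha)/(2-\alpha)}$. Applying that theorem and dividing through by $b^{(4-\alpha)/2}$ (noting $Nb=cb^{(4-\alpha)/2}+o(b^{(4-\alpha)/2})$) gives
\begin{align*}
\limsup_{b\to\infty}\frac{1}{b^{(4-\alpha)/2}}\log\max_{|x|\le\exp\{b^{(4-\alpha)/2}\}}\mathbb{P}(\log u_\theta(t,x)\ge b\bar{\alpha})\le -c\bar{\alpha}+c^{(4-\alpha)/(2-\alpha)}D,
\end{align*}
where $D:=2^{-2/(2-\alpha)}|\theta|^{4/(2-\alpha)}t^{(4-\alpha)/(2-\alpha)}\mathcal{E}_t$.

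Finally, I would minimise the right-hand side over $c>0$. Writing $p:=(4-\alpha)/(2-\alpha)$, calculus gives the optimum at $c_*=(\bar{\alpha}/(pD))^{1/(p-1)}$, at which the minimum value equals $-\tfrac{p-1}{p}\,\bar{\alpha}^{p/(p-1)}(pD)^{-1/(p-1)}$. Substituting $p-1=2/(2-\alpha)$, $p/(p-1)=(4-\alpha)/2$ and $1/(p-1)=(2-\alpha)/2$, and using that the exponent $-(2-\alpha)/2$ applied to $D$ produces the factor $2|\theta|^{-2}t^{-(4-\alpha)/2}\mathcal{E}_t^{-(2-\alpha)/2}$, reproduces exactly the constant on the right-hand side of \eqref{2020-8-29 19:12:47}. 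The only subtle point is bookkeeping: one must verify that the $o(N^{(4-\alpha)/(2-\alpha)})$ error in Theorem \ref{2020-8-29 16:47:39} is indeed uniform over the chosen $|x|$-ball (this is automatic because the theorem's statement already is uniform in $|x|\le\exp\{\beta N^{(4-\alpha)/(2-\alpha)}\}$) and that $\beta$ can be chosen after $c$, so no circularity arises. The main work lies in the variational optimisation, which is routine once the scaling $N\asymp b^{(2-\alpha)/2}$ is identified.
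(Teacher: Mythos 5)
Your proposal is correct and follows essentially the same route as the paper: Chebyshev/Markov inequality applied with a moment order of size $\asymp b^{(2-\alpha)/2}$, the uniform high-moment bound of Theorem \ref{2020-8-29 16:47:39} over the ball $|x|\le\exp\{\beta N^{\frac{4-\alpha}{2-\alpha}}\}$, and a final optimisation over the free scaling parameter, which yields exactly the constant in \eqref{2020-8-29 19:12:47}. The only cosmetic difference is that you work with integer moments $N=\lfloor c\,b^{(2-\alpha)/2}\rfloor$ directly, whereas the paper takes a real moment order $b^{\frac{2-\alpha}{2}}\beta$ and passes to integers via $\|\cdot\|_{L^{p}(\Omega)}\le\|\cdot\|_{L^{\lfloor p\rfloor+1}(\Omega)}$.
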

\begin{proof}

By using Theorem \ref{2020-8-29 16:47:39} and the inequality $\|\cdot\|_{  L^p(\Omega)}\le\|\cdot\|_{  L^{\lfloor p\rfloor+1}(\Omega)},$  we find that for all $\beta>0$,
\begin{eqnarray}
\limsup\limits_{b\rightarrow\infty}\frac{1}{b^{\frac{4-\alpha}{2-\alpha}}}
\log\max\limits_{|x|\le \exp\{b^{\frac{4-\alpha}{2-\alpha}}\}} \mathbb{E}u_\theta^{b\beta}(t,x)
\le2^{-\frac{2}{2-\alpha}}|\theta|^{\frac{4}{2-\alpha}}t^{\frac{4-\alpha}{2-\alpha}}
\mathcal{E}_t\beta^{\frac{4-\alpha}{2-\alpha}}.\label{2019-12-6 17:33:40}
\end{eqnarray}
By Chebyshev inequality and Theorem \ref{2020-8-29 16:47:39}, we observe that \eqref{2020-8-29 19:12:47} is from the fact that
\begin{eqnarray*}
&&\limsup\limits_{b\rightarrow\infty}\frac{1}{b^{\frac{4-\alpha}{2}}}\log  \max\limits_{|x|\le\exp\{ b^{\frac{4-\alpha}{2}}\}} \mathbb{P}(\log u_\theta(t,x)\ge b\bar{\alpha})\nonumber\\
&&\le\inf\limits_{\beta>0}\bigg\{\limsup\limits_{b\rightarrow \infty}\frac{1}{b^{\frac{4-\alpha}{2}}}\log\max\limits_{|x|\le\exp\{ b^{\frac{4-\alpha}{2}}\}}
\mathbb{E}|u_\theta(t,x)|^{b^{\frac{2-\alpha}{2}}\beta}-\beta \bar{\alpha}\bigg\}.\qquad\quad~
\end{eqnarray*}

\end{proof}

To prove the upper bound of spatial asymptotics, we also need the following localized error estimation.
\begin{proposition}\label{2019-12-6 23:32:02}
Assume that
 condition \eqref{2020-9-7 14:58:37},
 condition (H-\ref{2020-4-29 12:23:47}) and condition (H-\ref{2020-4-29 12:23:35}) hold, 
there exists some $\hat{c}>0$ such that for all $\bar{\alpha}>0$ and $\theta\neq0$, it holds that
\begin{eqnarray}
&&\limsup\limits_{b\rightarrow \infty}\frac{1}{b^{\frac{4-\alpha}{2}}}\log\max\limits_{|z|\le \exp\{b^{\frac{4-\alpha}{2}}\}}
\mathbb{P}(\log\max\limits_{x,y\in B(z,e^{-\hat{c}\bar{\alpha}b})}
|u_\theta(t,x)-u_\theta(t,y)|\ge b\bar{\alpha})\nonumber\\
&&\le-2|\theta|^{-2}t^{-\frac{4-\alpha}{2}}
\mathcal{E}_t^{-\frac{2-\alpha}{2}}(\frac{4-\alpha}{2-\alpha})^{-\frac{2-\alpha}{2}}
\frac{2}{4-\alpha}\bar{\alpha}^{\frac{4-\alpha}{2}}.\label{2020-9-11 13:11:29}
\end{eqnarray}
\end{proposition}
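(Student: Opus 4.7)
The plan is to reduce the localized oscillation estimate to the high-moment bounds from Section \ref{2020-5-16 22:13:56} via Chebyshev's inequality combined with the H\"older modulus estimate in Corollary \ref{2020-5-14 18:51:37}. The choice of the radius $e^{-\hat{c}\bar{\alpha}b}$ will be engineered so that the H\"older gain compensates the moment growth, and $\hat{c}$ will be taken large enough that the optimized bound matches the target exponent in \eqref{2020-9-11 13:11:29}.

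First, I would fix $l\in(0,1-\alpha_0-\alpha/2)$ and introduce the moment parameter $p=p(b):=b^{(2-\alpha)/2}\beta$, where $\beta>0$ will be optimized at the end. Since $r:=e^{-\hat{c}\bar{\alpha}b}\le 1$, the ball $B(z,r)$ is contained in $\prod_{i=1}^d[z_i-1,z_i+1]$, so Chebyshev's inequality together with Corollary \ref{2020-5-14 18:51:37} gives
\begin{align*}
&\mathbb{P}\Big(\max_{x,y\in B(z,r)}|u_\theta(t,x)-u_\theta(t,y)|\ge b\bar{\alpha}\Big)\\
&\le \frac{r^{lp}}{(b\bar{\alpha})^p}\,\mathbb{E}\max_{x,y\in\prod_{i=1}^d[z_i-1,z_i+1]}\frac{|u_\theta(t,x)-u_\theta(t,y)|^p}{|x-y|^{lp}}\\
&\le \frac{C_{\alpha,t,l,d}^{\,p}(1+|\theta|)^p(1+t^{-1/4}e^{1/(8t)})^p\,((2\lfloor p\rfloor+2)!)^{1/2}}{(b\bar{\alpha})^p}\,e^{-l\hat{c}\bar{\alpha}bp}\\
&\quad\cdot\Big(\max_{|z|\le 2R}\mathbb{E}u_{2\theta}^{\lfloor p\rfloor+1}(t,z)\Big)^{1/2}\Big(\max_{|z|\le 2R}p_{4t}\ast u_0(z)\Big)^{p/2},
\end{align*}
where $R=\exp\{b^{(4-\alpha)/2}\}$.

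Next, taking logarithms, dividing by $b^{(4-\alpha)/2}$ and letting $b\to\infty$, I would control each contribution as follows: the deterministic factors $C^p(1+|\theta|)^p(1+t^{-1/4}e^{1/(8t)})^p$ and the factorial $((2\lfloor p\rfloor+2)!)^{1/2}$ contribute at most $O(p\log p)=O(b^{(2-\alpha)/2}\log b)=o(b^{(4-\alpha)/2})$ by Stirling; similarly $p\log(b\bar{\alpha})=o(b^{(4-\alpha)/2})$. Under condition \eqref{2020-9-7 14:58:37}, for every $\varepsilon>0$ one has $\max_{|z|\le 2R}\log p_{4t}\ast u_0(z)\le \varepsilon (\log 2R)^{2/(4-\alpha)}\le 2\varepsilon b$ for large $b$, so $(p/2)\log\max p_{4t}\ast u_0\le \varepsilon\beta\, b^{(4-\alpha)/2}$ with $\varepsilon\to 0$ arbitrarily. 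Finally, since $2R\le\exp\{\beta' b^{(4-\alpha)/(2-\alpha)}\}$ for any $\beta'>0$ once $b$ is large (because $(4-\alpha)/2<(4-\alpha)/(2-\alpha)$), Theorem \ref{2020-8-29 16:47:39} applied with exponent $N=\lfloor p\rfloor+1\sim b^{(2-\alpha)/2}\beta$ yields
\[
\limsup_{b\to\infty}\frac{1}{b^{(4-\alpha)/2}}\cdot\frac{1}{2}\log\max_{|z|\le 2R}\mathbb{E}u_{2\theta}^{\lfloor p\rfloor+1}(t,z)\le 2^{-\frac{2}{2-\alpha}-1}|2\theta|^{\frac{4}{2-\alpha}}t^{\frac{4-\alpha}{2-\alpha}}\mathcal{E}_t\,\beta^{\frac{4-\alpha}{2-\alpha}}.
\]
Combining these estimates, the leading contribution to $b^{-(4-\alpha)/2}\log\mathbb{P}(\cdots)$ is
\[
-l\hat{c}\bar{\alpha}\beta + \frac{1}{2}\kappa(\theta,t)\,\beta^{\frac{4-\alpha}{2-\alpha}},\qquad \kappa(\theta,t):=2^{-\frac{2}{2-\alpha}}(2|\theta|)^{\frac{4}{2-\alpha}}t^{\frac{4-\alpha}{2-\alpha}}\mathcal{E}_t.
\]

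To finish, I would infimize over $\beta>0$. The Legendre-type minimum equals $-\frac{2}{4-\alpha}(l\hat{c}\bar{\alpha})^{\frac{4-\alpha}{2}}\cdot c(\theta,t)$ for an explicit constant $c(\theta,t)$ depending on $\kappa(\theta,t)$ (via $\beta_\star=(2(2-\alpha)l\hat{c}\bar{\alpha}/((4-\alpha)\kappa))^{(2-\alpha)/2}$). Since this minimum scales as $\hat{c}^{(4-\alpha)/2}$ while the target bound in \eqref{2020-9-11 13:11:29} is a fixed multiple of $\bar{\alpha}^{(4-\alpha)/2}$ independent of $\hat{c}$, it is enough to fix $\hat{c}>0$ sufficiently large (depending on $l,\theta,t,\alpha,\mathcal{E}_t$) to dominate the prescribed constant on the right-hand side. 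The main technical obstacle I anticipate is the bookkeeping: verifying that every lower-order term is genuinely $o(b^{(4-\alpha)/2})$ uniformly in $|z|\le\exp\{b^{(4-\alpha)/2}\}$, and matching the explicit constants so that the final choice of $\hat{c}$ recovers precisely the exponent appearing in \eqref{2020-9-11 13:11:29} rather than merely an upper bound of correct order.
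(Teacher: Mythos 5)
Your approach is essentially the paper's: Chebyshev's inequality combined with the H\"older modulus from Corollary \ref{2020-5-14 18:51:37} and the moment asymptotics of Theorem \ref{2020-8-29 16:47:39} (via \eqref{2019-12-6 17:33:40}), followed by a Legendre-type optimization in $\beta$ and a choice of $\hat{c}$ large enough that the radius decay $r^{lp}$ dominates. One computational slip: the event is $\log\max|u_\theta(t,x)-u_\theta(t,y)|\ge b\bar{\alpha}$, i.e.\ $\max|\cdots|\ge e^{b\bar{\alpha}}$, so the Chebyshev denominator should be $e^{pb\bar{\alpha}}$, not $(b\bar{\alpha})^p$. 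The correct threshold contributes $-pb\bar{\alpha}=-\bar{\alpha}\beta\,b^{\frac{4-\alpha}{2}}$ after normalization, which is of leading order and should be kept (the paper carries this as the ``$+1$'' in $(2^{-1}\hat{c}l+1)\beta\bar{\alpha}$); you instead wrote $p\log(b\bar{\alpha})=o(b^{\frac{4-\alpha}{2}})$ and discarded it. This error only weakens your intermediate bound, and since you compensate by taking $\hat{c}$ sufficiently large the final conclusion still holds, so the argument survives — but it is worth fixing, because with the threshold term retained you would see exactly the paper's tuning $\hat{c}l/2+1=2^{(2+\alpha)/(4-\alpha)}$ (the paper gives away a spare factor $2$ on the radius decay; a sharper reading yields $\hat{c}l+1\ge 2^{(2+\alpha)/(4-\alpha)}$). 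Your remaining bookkeeping (factorial prefactor, $\max p_{4t}\ast u_0$ under condition \eqref{2020-9-7 14:58:37}, domain matching $2R\le\exp\{(b+1)^{\frac{4-\alpha}{2}}\}$) is all correct and of lower order.
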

\begin{proof}
By Chebyshev inequality, we have
\begin{eqnarray}
&&J_e:=\limsup\limits_{b\rightarrow \infty}\frac{1}{b^{\frac{4-\alpha}{2}}}\log\max\limits_{|z|\le \exp\{b^{\frac{4-\alpha}{2}}\}}
\mathbb{P}(\log\max\limits_{x,y\in B(z,e^{-\hat{c}\bar{\alpha}b})}|u_\theta(t,x)-u_\theta(t,y)|\ge b\bar{\alpha})\nonumber\\
&&\le\inf\limits_{\beta>0}\bigg\{\limsup\limits_{b\rightarrow \infty}\frac{1}{b^{\frac{4-\alpha}{2}}}\log\bigg[\max\limits_{|z|\le \exp\{b^{\frac{4-\alpha}{2}}\}}
\mathbb{E}\max\limits_{x,y\in B(z,1)}
\frac{|u_\theta(t,x)-u_\theta(t,y)|^{b^{\frac{2-\alpha}{2}}
\beta}}{|x-y|^{
l b^{\frac{2-\alpha}{2}}\beta}}e^{
-\frac{\hat{c}}{2}\bar{\alpha}l b^{\frac{4-\alpha}{2}}\beta}\bigg]-\beta \bar{\alpha}\bigg\},\nonumber
\end{eqnarray}
where the parameter $l>0$ is from Corollary \ref{2020-5-14 18:51:37}. Moreover, because of Corollary \ref{2020-5-14 18:51:37} and \eqref{2019-12-6 17:33:40}, it has 
\begin{align*}
J_e\le\inf\limits_{\beta>0}\bigg\{\limsup\limits_{b\rightarrow \infty}
\frac{1}{2b^{\frac{4-\alpha}{2}}}\log\max\limits_{|z|\le2 \exp\{b^{\frac{4-\alpha}{2}}\}}
 \mathbb{E}u_{2\theta}^{\lfloor b^{\frac{2-\alpha}{2}}
\beta\rfloor+1}(t,z)-(2^{-1}\hat{c}l+1)\beta \bar{\alpha}\bigg\},
\end{align*}
where take $\hat{c}>0$ such that $(\hat{c}l/2+1)=2^{\frac{2+\alpha}{4-\alpha}}.$ At last, by the above and some elementary computations, we can get \eqref{2020-9-11 13:11:29}.
\end{proof}


The upper bound of spatial asymptotics is as follows.
\begin{theorem}\label{2019-12-7 17:08:43}
Under condition \eqref{2020-9-7 14:58:37},
 condition (H-\ref{2020-4-29 12:23:47}) and condition (H-\ref{2020-4-29 12:23:35}),
for all $t>0$ and $\theta\neq0$, it holds that 
\begin{eqnarray}
&&\limsup_{R\rightarrow\infty}\frac{1}{(\log R)^{\frac{2}{4-\alpha}}}\log\max\limits_{|x|\le R}u_\theta(t,x)\nonumber\\
&&\le 2^{-\frac{4}{4-\alpha}}|\theta|^{\frac{4}{4-\alpha}}t
\mathcal{E}_t^{\frac{2-\alpha}{4-\alpha}}
(2-\alpha)^{-\frac{2-\alpha}{4-\alpha}}
(4-\alpha) d^{\frac{2}{4-\alpha}}  \qquad\mbox{a.s.}.\label{201973111157}
\end{eqnarray}
\end{theorem}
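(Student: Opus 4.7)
The plan is a covering-plus-Borel--Cantelli argument, driven by Propositions \ref{2019-12-7 16:33:30} and \ref{2019-12-6 23:32:02}. Let $C^\ast$ denote the constant on the right of \eqref{201973111157}. Fix an arbitrary $\bar{\alpha}>C^\ast$; it suffices to prove that almost surely, for all sufficiently large $R$, $\max_{|x|\le R}\log u_\theta(t,x)\le \bar{\alpha}(\log R)^{2/(4-\alpha)}+\log 2$, since then letting $\bar{\alpha}\downarrow C^\ast$ along a countable sequence yields \eqref{201973111157}. Setting $R_n:=e^n$ and $b_n:=(\log R_n)^{2/(4-\alpha)}=n^{2/(4-\alpha)}$, the bound for arbitrary $R\in[R_n,R_{n+1}]$ follows by monotonicity of the maximum together with $b_{n+1}/b_n\to 1$, so I only need summability (in $n$) of the probability of exceedance at each scale $R_n$.

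At level $n$, I cover $B(0,R_n)$ by $N_n$ balls of radius $r_n:=e^{-\hat{c}\bar{\alpha} b_n}$ centred at grid points $\{z_j\}_j\subset B(0,R_n)$, with $\hat{c}$ taken from Proposition \ref{2019-12-6 23:32:02}. Since $\alpha\in(0,2)$ forces $2/(4-\alpha)<1$, one has $b_n=o(n)$ and hence $N_n\le C_d(R_n/r_n)^d=\exp\{dn+d\hat{c}\bar{\alpha} b_n\}=\exp\{dn(1+o(1))\}$. Define
\[A_j^{(1)}:=\{\log u_\theta(t,z_j)\ge b_n\bar{\alpha}\},\qquad A_j^{(2)}:=\Bigl\{\log\max_{x,y\in B(z_j,r_n)}|u_\theta(t,x)-u_\theta(t,y)|\ge b_n\bar{\alpha}\Bigr\}.\]
On the complement of $\bigcup_j(A_j^{(1)}\cup A_j^{(2)})$, each $x\in B(0,R_n)$ lies in some $B(z_j,r_n)$ and satisfies $u_\theta(t,x)\le u_\theta(t,z_j)+|u_\theta(t,x)-u_\theta(t,z_j)|\le 2e^{b_n\bar{\alpha}}$, so $\max_{|x|\le R_n}\log u_\theta(t,x)\le b_n\bar{\alpha}+\log 2$.

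Propositions \ref{2019-12-7 16:33:30} and \ref{2019-12-6 23:32:02} are uniform over $|z_j|\le R_n=\exp\{b_n^{(4-\alpha)/2}\}$ and deliver, for every $\varepsilon>0$ and all large $n$,
\[\max_j\mathbb{P}(A_j^{(1)})\vee\max_j\mathbb{P}(A_j^{(2)})\le \exp\bigl\{-(1-\varepsilon)C_1\bar{\alpha}^{(4-\alpha)/2}n\bigr\},\]
where $C_1=\tfrac{4}{4-\alpha}|\theta|^{-2}t^{-(4-\alpha)/2}\mathcal{E}_t^{-(2-\alpha)/2}(\tfrac{4-\alpha}{2-\alpha})^{-(2-\alpha)/2}$ and I have used $b_n^{(4-\alpha)/2}=n$. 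A union bound together with the cardinality estimate for $N_n$ then gives
\[\mathbb{P}\bigl(\max_{|x|\le R_n}\log u_\theta(t,x)>b_n\bar{\alpha}+\log 2\bigr)\le 2N_n\,e^{-(1-\varepsilon)C_1\bar{\alpha}^{(4-\alpha)/2}n}\le \exp\bigl\{(d-(1-\varepsilon)C_1\bar{\alpha}^{(4-\alpha)/2}+o(1))n\bigr\},\]
which is summable precisely when $\bar{\alpha}>(d/C_1)^{2/(4-\alpha)}$. A short algebraic check, using $4^{-2/(4-\alpha)}=2^{-4/(4-\alpha)}$ and $(4-\alpha)^{2/(4-\alpha)}\cdot(4-\alpha)^{(2-\alpha)/(4-\alpha)}=4-\alpha$, identifies this threshold with $C^\ast$; Borel--Cantelli then closes the argument.

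The main subtlety is not the covering itself but the calibration of rates: the radius $e^{-\hat{c}\bar{\alpha} b_n}$ used in Proposition \ref{2019-12-6 23:32:02} is engineered exactly so that the modulus-of-continuity term and the point-wise tail contribute the same exponent $C_1\bar{\alpha}^{(4-\alpha)/2}n$, so that neither dominates; a secondary point is that $b_n=o(n)$ keeps the covering cardinality at $e^{dn(1+o(1))}$, which is what makes the final threshold collapse precisely onto $C^\ast$.
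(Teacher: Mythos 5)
Your proposal is correct and follows essentially the same route as the paper's proof: a mesh of balls of radius $e^{-\hat{c}\bar{\alpha}b_n}$ covering $B_d(0,R)$, the pointwise tail bound of Proposition \ref{2019-12-7 16:33:30} together with the modulus-of-continuity bound of Proposition \ref{2019-12-6 23:32:02}, a union bound whose exponent threshold collapses onto the stated constant (your algebra identifying $(d/C_1)^{2/(4-\alpha)}$ with $C^\ast$ matches the paper's relation $\tilde{C}(\kappa)=d$), and Borel--Cantelli plus monotone interpolation along a sequence. The only cosmetic differences are your geometric sequence $R_n=e^{n}$ in place of the paper's $R=n^{p}$ and your fixed $\bar{\alpha}>C^\ast$ in place of the paper's $\kappa+\delta$ bookkeeping.
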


\begin{proof}
To simplify the notations, let $$\kappa:=2^{-\frac{2}{4-\alpha}}|\theta|^{\frac{4}{4-\alpha}}t
\mathcal{E}_t^{\frac{2-\alpha}{4-\alpha}}
(\frac{4-\alpha}{2-\alpha})^{\frac{2-\alpha}{4-\alpha}}
(\frac{2}{4-\alpha})^{-\frac{2}{4-\alpha}} d^{\frac{2}{4-\alpha}},$$ $$\tilde{C}(\bar{\alpha}):=2|\theta|^{-2}t^{-\frac{4-\alpha}{2}}
\mathcal{E}_t^{-\frac{2-\alpha}{2}}(\frac{4-\alpha}{2-\alpha})^{-\frac{2-\alpha}{2}}
\frac{2}{4-\alpha}\bar{\alpha}^{\frac{4-\alpha}{2}},$$ $$\rho(b):=b^{\frac{4-\alpha}{2}}\qquad\mbox{and}\qquad\sigma(R):=(\log R)^{\frac{2}{4-\alpha}},$$ then it holds that $\tilde{C}(\kappa)=d$ and $\rho(\sigma(R))=\log R$.
 For all $R>0$, let the mesh $\mathcal{N}_R:=d^{-\frac{1}{2}}e^{-\hat{c}\kappa\sigma(R)}\mathbb{Z}^{d}\cap B_d(0,R)$ and the small region $I_z:=B_d(z,e^{-\hat{c}\kappa\sigma(R)})$ for $z\in\mathcal{N}_R$, where the parameter $\hat{c}>0$ is from Proposition \ref{2019-12-6 23:32:02}. Notice that the union $\cup_{z\in\mathcal{N}_R}I_z$ can cover 
  $B_d(0,R)$. 
  Let $|\mathcal{N}_R|$ be the numbers of points in the mesh $\mathcal{N}_R$, then there exists some $C_d>0$ such that
\begin{eqnarray}
|\mathcal{N}_R|\le C_dR^{d}e^{\hat{c}\kappa\sigma(R)}.\label{2019-12-7 16:35:34}
\end{eqnarray}
Notice that the following relation
\begin{eqnarray}
\max\limits_{|x|\le R}u_\theta(t,x)\le \max_{z\in\mathcal{N}_R}u(t,z)+\max_{z\in\mathcal{N}_R}\max\limits_{x,y\in I_z}|u_\theta(t,x)-u_\theta(t,y)|.\nonumber
\end{eqnarray}
Furthermore, when $R$ is enough large, by the inequality
$\log(|a|+|b|)\le\log2+\log |a| \vee \log |b|$, we find that for all $\delta>0$, it holds that
\begin{eqnarray}
&&\mathbb{P}(\log\max\limits_{|x|\le R}u_\theta(t,x)\ge(\kappa+2\delta)\sigma(R) )
\nonumber\\
&&\le\mathbb{P}\Big(\log\max_{z\in\mathcal{N}_R}u(t,z)\bigvee
\log\max_{z\in\mathcal{N}_R}\max\limits_{x,y\in I_z}|u_\theta(t,x)-u_\theta(t,y)|\ge(\kappa+\delta)\sigma(R)
\Big)\nonumber\\
&&\le|\mathcal{N}_R|\max\limits_{|z|\le R}\mathbb{P}(\log u(t,z)\ge(\kappa+\delta)\sigma(R))\nonumber\\
&&
+|\mathcal{N}_R|\max\limits_{|z|\le R}
\mathbb{P}\Big(\log \max\limits_{x,y\in I_z}|u_\theta(t,x)-u_\theta(t,y)|\ge(\kappa+\delta)\sigma(R)\Big)
\label{2019-10-4 10:40:27}
\end{eqnarray}
One hand, by (\ref{2019-12-7 16:35:34}), Proposition \ref{2019-12-7 16:33:30},  $\tilde{C}(\kappa)=d$ and $\rho(\sigma(R))=\log R$, we find that there exists some 
nonnegative function $\delta'(\delta)$ satisfying $\delta'(\delta)\rightarrow0$ ($\delta\rightarrow0$) such that
\begin{eqnarray}
&&|\mathcal{N}_R|\mathbb{P}(\log u(t,0)\ge(\kappa+\delta)\sigma(R))\nonumber\\
&&\le  R^{d}\exp\left\{\hat{c}\kappa\frac{\sigma(R)}{\rho(\sigma(R))}\log R-\tilde{C}(\kappa+\delta)\rho(\sigma(R))\right\}\nonumber\\
&&\le  R^{d}\exp\left\{\frac{\delta'}{2}\log R-(d+\delta')\log R\right\}\nonumber\\
&&= R^{-\frac{\delta'}{2}},\label{2019-12-7 16:44:42}
\end{eqnarray}
where the last second inequality is due to
$b/\rho(b)\rightarrow0$~($b\rightarrow\infty$). On the other hand, by the similar computations to (\ref{2019-12-7 16:44:42}) and Proposition \ref{2019-12-6 23:32:02}, it also holds that
\begin{eqnarray}
|\mathcal{N}_R|
\mathbb{P}(\log \max\limits_{x,y\in I_z}|u_\theta(t,x)-u_\theta(t,y)|\ge(\kappa+\delta)\sigma(R))
\le R^{-\frac{\delta'}{2}}.\label{2019-12-7 16:52:03}
\end{eqnarray}
To sum up (\ref{2019-10-4 10:40:27})-(\ref{2019-12-7 16:52:03}), we have
\begin{eqnarray}
\mathbb{P}(\log\max\limits_{|x|\le R}u_\theta(t,x)\ge(\kappa+2\delta)\sigma(R) )
\le CR^{-\frac{\delta'}{2}}.\nonumber
\end{eqnarray}
We substitute $R$ in the above by the sequence $\{n^{p}\}$, in which $p>0$ and $p\frac{\delta'}{2}>1$, then
\begin{eqnarray}
\sum\limits_{n\ge 1}\mathbb{P}(\log\max\limits_{|x|\le n^{p}}u_\theta(t,x)\ge(\kappa+2\delta)\sigma(n^{p}) )
\le C\sum\limits_{n\ge N}n^{-\frac{\delta'}{2}p}<\infty,\nonumber
\end{eqnarray}
where take $N$ enough large. By Borel-Cantelli lemma, 
 the fact that $\max\limits_{|x|\le R}u_\theta(t,x)$ and $\sigma(R)$ are monotone with respect to $R$ and $\lim\limits_{n\rightarrow\infty}\frac{\sigma(n^p)}{\sigma((n+1)^p)}=1$, we can prove that
\begin{eqnarray*}
\limsup_{R\rightarrow\infty}\frac{1}{\sigma(R)}\log\max\limits_{|x|\le R}u_\theta(t,x)\le\limsup\limits_{\delta\rightarrow0}(\kappa+2\delta)=\kappa  \qquad\mbox{a.s..}
\end{eqnarray*}
\end{proof}


\section{The lower bound of spatial asymptotics when the initial value $u_0\equiv1$}\label{2019-8-27 21:40:56}

\setcounter{equation}{0}
\renewcommand\theequation{5.\arabic{equation}}


We decompose the proof of the lower bound into two procedures in the section. First, we construct the localization of Feynman-Kac formula $u_\theta(t,x)$, second, transform the lower bound of $u_\theta(t,x)$ into  the lower bound of the localized version of $u_\theta(t,x)$ and prove it.

Set the generalized function $\tilde{\eta}(\xi):=\mathcal{F}^{-1}q^{-\frac{1}{2}}(\xi)$, where $\mathcal{F}^{-1}$ is the generalized inverse Fourier transform, and define the centered generalized Gaussian field 
$$\qquad W_0(\phi):=\int_{\mathbb{R}_+\times\mathbb{R}^d}\tilde{\eta}(\cdot)\ast \phi(t,\cdot)(x)W(dt,dx)\qquad \forall \phi\in C_0^\infty(\mathbb{R}_+\times\mathbb{R}^d).$$
Furthermore, it can be checked that the Gaussian field $\dot{W}_0$ formally has the covariance
\begin{eqnarray*}
\qquad\qquad\mathbb{E}[\dot{W}_0(t,x)\dot{W}_0(s,y)]=\gamma_0(t-s)\delta_0(x-y)
\qquad\forall t,s\in\mathbb{R}_+, x,y\in\mathbb{R}^d.
\end{eqnarray*}

Hence, by $W_0$, we can represent the $W$ in \eqref{2019-12-9 11:50:33} as follows
\begin{eqnarray*}
\qquad W(\phi)= \int_{\mathbb{R}_+\times\mathbb{R}^d}\tilde{\gamma}(\cdot)\ast \phi(t,\cdot)(x)W_0(dt,dx), \qquad \forall \phi\in C_0^\infty(\mathbb{R}_+\times\mathbb{R}^d),
\end{eqnarray*}
 where let the generalized function $\tilde{\gamma}(x):=\mathcal{F}q^\frac{1}{2}(x)$ on $\mathbb{R}^d$ such that $\gamma=\tilde{\gamma}\ast\tilde{\gamma}$.

  To localize the positive definite generalized function $\gamma$, let the function
$$l(x):=
\frac{1-\cos x}{\pi x^2}\qquad\forall x\in \mathrm{R},$$
which is positive definite with the Fourier transform
$$\mathcal{F}l(\xi)=
(1-|\xi|) \mathbf{1}_{|\xi|\le1}\qquad\forall \xi\in \mathrm{R}.$$
Moreover, for all $b>0$, denote the localized function by $l_b(x):=b l(b x)$. 
To simplify the notation, let the function on $\mathbb{R}^d$
 $$\hat{l}_b(x):=\prod_{j=1}^dl_b(x_j).$$

Furthermore, define the localized centered generalized Gaussian field
$$\qquad W_b(\phi):=\int_{\mathbb{R}_+\times\mathbb{R}^d} (\tilde{\gamma}\mathcal{F}\hat{l}_b)(\cdot)\ast \phi(t,\cdot)(x)W_0(dt,dx)\qquad \forall \phi\in C_0^\infty(\mathbb{R}_+\times\mathbb{R}^d), $$
where the product $\tilde{\gamma}\mathcal{F}\hat{l}_b$ is interpreted in sense of generalized function.
Let $\gamma_b(x):=(\tilde{\gamma}\mathcal{F}\hat{l}_b)\ast(\tilde{\gamma}\mathcal{F}\hat{l}_b)$,
then the Gaussian field $\dot{W}_b$ formally has the covariance
\begin{eqnarray*}
\qquad\qquad\quad\mathbb{E}[\dot{W}_b(t,x)\dot{W}_b(s,y)]=\gamma_0(t-s)\gamma_b(x-y)
\qquad\forall t,s\in\mathbb{R}_+, x,y\in\mathbb{R}^d.
\end{eqnarray*}
Here, the $\gamma_b$ is a compact supported and positive definite generalized function in $\mathcal{S}'(\mathbb{R}^d)$, and its Fourier transform is the function 
$q_b(\xi):=(\hat{l}_b\ast q^{\frac{1}{2}}(\xi))^2$. Based on the above notations, we can define the ``localized Feynman-Kac representation''
\begin{eqnarray*}
u_{\theta,b}(t,x):=\int_{\mathbb{R}^d} \mathbb{E}_B
\exp\bigg\{\theta\int_0^t\dot{W}_b(t-s,B^{x,y}_{0,t}(s))ds\bigg\} p_t(y-x)u_0(dy),
\end{eqnarray*}
where the Brownian motion $B$ is independent with $W$ and $W_b$.

The following proposition is the estimation of asymptotic error between $u_{\theta}(t,x)$ and $u_{\theta,b}(t,x)$.

\begin{proposition}\label{2019-12-9 21:01:47}
When the initial value $u_0\equiv1$, assume that condition (H-\ref{2020-4-29 12:23:47}) and condition (H-\ref{2020-4-29 12:23:35}) hold, and let $\theta\neq0$ and $t>0$,  then there exists some fixed $C,\nu>0$ such that for all enough large integer $n$ and $b>1$, the following inequality holds
\begin{eqnarray*}
\mathbb{E}\left|u_{\theta}(t,x)-u_{\theta,b}(t,x)\right|^n\le \exp\{Cn^{\frac{4-\alpha}{2-\alpha}}\}
b^{-\nu n}.
\end{eqnarray*}
\end{proposition}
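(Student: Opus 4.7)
The plan is to isolate the difference $e^{V_\theta}-e^{V_{\theta,b}}$ using the elementary inequality $|e^a-e^b|\le|a-b|(e^a+e^b)$, where $V_\theta:=\theta\int_0^t\dot W(t-s,B^x(s))ds$ and $V_{\theta,b}$ is its localized counterpart; because $u_0\equiv1$ the two Feynman--Kac formulas reduce to $\hat u_\theta(t,x)=\mathbb E_B e^{V_\theta}$ and $\hat u_{\theta,b}(t,x)=\mathbb E_B e^{V_{\theta,b}}$, and spatial stationarity lets me set $x=0$. Applying the pointwise inequality inside $\mathbb E_B$, Cauchy--Schwarz under $\mathbb E_B$, and then Cauchy--Schwarz under $\mathbb E_W$, I obtain
$$\mathbb E|\hat u_\theta-\hat u_{\theta,b}|^n\le C^n\bigl\{\mathbb E(\mathbb E_B(V_\theta-V_{\theta,b})^2)^n\bigr\}^{1/2}\bigl\{\mathbb E(\hat u_{2\theta}+\hat u_{2\theta,b})^n\bigr\}^{1/2}.$$

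For the exponential-moment factor on the right, the spectral density of the localized field, $q_b=(\hat l_b*q^{1/2})^2$, is pointwise dominated (via Cauchy--Schwarz and $\int\hat l_b=1$) by $\hat l_b*q$, and rerunning Proposition~\ref{2019-12-9 15:24:16} with $q$ replaced by $q_b$ shows that the variational constant is controlled uniformly in $b\ge 1$, yielding $\mathbb E\hat u_{2\theta}^n+\mathbb E\hat u_{2\theta,b}^n\le\exp\{Cn^{(4-\alpha)/(2-\alpha)}\}$. For the ``smallness'' factor, conditioning on $B$ makes $V_\theta-V_{\theta,b}$ centered Gaussian in $W$ with conditional variance
$$\sigma_B^2=\theta^2\int_0^t\!\!\int_0^t\gamma_0(s-r)\int_{\mathbb R^d}e^{i\xi\cdot(B(s)-B(r))}\tilde q_b(\xi)\,d\xi\,ds\,dr,\qquad \tilde q_b(\xi):=(q^{1/2}(\xi)-\hat l_b*q^{1/2}(\xi))^2.$$
By Jensen's inequality and the Gaussian moment formula, $\mathbb E(\mathbb E_B(V_\theta-V_{\theta,b})^2)^n\le(2n-1)!!\,\mathbb E_B\sigma_B^{2n}$, and the $n$-th moment $\mathbb E_B\sigma_B^{2n}$ has exactly the chaos structure treated in Proposition~\ref{2019-12-9 15:24:16}, but with the ``error spectrum'' $\tilde q_b$ in place of $q$. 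Rerunning the Brownian-scaling, moment-comparison (via Proposition~\ref{2020-5-17 18:05:18}) and OU-transform arguments of Section~\ref{2020-5-16 22:13:56} with $\tilde q_b$, combined with a quantitative spectral estimate of the form
$$\int_{\mathbb R^d}(1+|\xi|^2)^{-\beta}\tilde q_b(\xi)\,d\xi\le C\,b^{-2\nu},\qquad \beta\in[0,1-\alpha_0],$$
produces $\mathbb E_B\sigma_B^{2n}\le C^n b^{-2\nu n}\exp\{Cn^{(4-\alpha)/(2-\alpha)}\}$. Since $(2n-1)!!\le(Cn)^n$ and $(4-\alpha)/(2-\alpha)>1$, so that $n\log n=o(n^{(4-\alpha)/(2-\alpha)})$, all polynomial prefactors are absorbed into the exponential, yielding the claimed bound.

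The main obstacle is establishing the quantitative spectral decay $\int(1+|\xi|^2)^{-\beta}\tilde q_b\lesssim b^{-2\nu}$. The difficulty is that $q(\xi)=C_q\prod|\xi_j|^{\alpha_j-1}$ is strongly anisotropic, possibly singular at the coordinate hyperplanes (when some $\alpha_j<1$) and polynomially growing in the $\alpha_1>1$ direction, while the one-dimensional Fejer kernel $l$ has only fractional moments $\int|y|^s l(y)dy<\infty$ for $s<1$. One has to split $\mathbb R^d$ into a bounded region and its complement: on the bounded region the fractional Hölder regularity of $q^{1/2}$ away from its singular set, coupled with $\int|\eta|^s\hat l_b(\eta)d\eta\le C b^{-s}$, produces the $b^{-2\nu}$ gain, while on the tail one exploits the spectral integrability furnished by condition~(C\ref{2020-04-30 12:38:17}) to make the tail contribution small by choosing the cutoff appropriately.
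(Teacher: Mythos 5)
Your overall architecture coincides with the paper's: the inequality $|e^{a}-e^{a'}|\le|a-a'|(e^{a}+e^{a'})$ plus Cauchy--Schwarz, a uniform-in-$b$ moment bound for the localized solution obtained by dominating its spectral density, and a smallness estimate for the spectral error $\tilde q_b=(q^{1/2}-\hat l_b\ast q^{1/2})^2$; this is exactly the skeleton of \eqref{2019-12-9 14:05:02}, \eqref{2019-12-9 14:09:33} and \eqref{2019-12-9 20:42:27}. Two points, however, do not go through as written. First, by applying Jensen to get $\mathbb E(\mathbb E_B(V_\theta-V_{\theta,b})^2)^n\le(2n-1)!!\,\mathbb E_B\sigma_B^{2n}$ you oblige yourself to control \emph{all} moments of the conditional variance with the factor $b^{-2\nu n}$, and your plan to do so by ``rerunning the Brownian-scaling arguments of Section~\ref{2020-5-16 22:13:56} with $\tilde q_b$'' fails as stated: $\tilde q_b$ (and likewise $q_b$ and $\hat l_b\ast q$) is not homogeneous, so the scaling step underlying Proposition~\ref{2019-12-9 15:24:16} is unavailable for it. The detour is also unnecessary: after Minkowski's integral inequality and the conditional Gaussian moment bound only the first moment $\mathbb E_B\sigma_B^2$, i.e.\ the total second moment of the difference of the two Gaussian functionals, is needed (this is how \eqref{2020-8-26 10:46:33} and \eqref{2019-12-9 14:05:02} are obtained); and for the uniform bound on $\mathbb E\hat u_{2\theta,b}^n$ one should first dominate $\hat l_b\ast q\le Cq+C\prod_{j\ge2}|\xi_j|^{\alpha_j-1}$ (as in \eqref{2020-3-27 18:14:47}--\eqref{2020-05-28 20:04:24}), so that Proposition~\ref{2019-12-9 15:24:16} is only ever invoked for homogeneous densities.

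The genuine gap is the quantitative estimate $\int_{\mathbb R^d}(1+|\xi|^2)^{-\beta}\tilde q_b(\xi)\,d\xi\le Cb^{-2\nu}$, which you rightly single out as the main obstacle but only sketch, and the sketch as written would not deliver a power rate. ``Fractional H\"older regularity of $q^{1/2}$ away from its singular set'' does not cover neighbourhoods of the hyperplanes $\{\xi_j=0\}$ with $\alpha_j<1$, which meet every bounded region and on which $|q^{1/2}-\hat l_b\ast q^{1/2}|$ is of the same order as $q^{1/2}$ itself; and on the unbounded part, ``choosing the cutoff appropriately'' yields only $o(1)$ as $b\to\infty$ unless the cutoff (and the excluded neighbourhoods of the singular hyperplanes) are taken $b$-dependent together with power-counted tail bounds---otherwise the conclusion degenerates to ``for every $\epsilon$ there is $b_0$'', which is strictly weaker than the claimed $b^{-\nu n}$. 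The paper obtains the rate globally, with no region splitting, through the coordinate-wise decomposition \eqref{2020-5-28 22:12:15}: the first coordinate is handled by the subadditivity $(|x|+|y|)^{a}\le|x|^{a}+|y|^{a}$, giving decay $b^{-(\alpha_1-1)}$ (this is precisely where $\alpha_1>1$ enters, see \eqref{2020-05-29 23:35:44}), while the coordinates $2,\dots,d$ are treated in physical space via $1-\mathcal{F}l_{2,b}(x)\le b^{-\varepsilon}\sum_{j\ge2}|x_j|^{\varepsilon}$ with $2\varepsilon<\min_{2\le j\le d}\alpha_j$, giving $b^{-2\varepsilon}$ as in \eqref{2020-5-29 22:51:50}. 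Unless you carry out your splitting with this level of quantitative detail (explicit $b$-dependent cutoffs and exponents), the proposition is not yet established by your argument.
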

\begin{proof}
For all $\theta\neq0$, $b>1$ and $t>0$, let the stationary Gaussian process $\hat{V}_b(x):=\int_0^t\dot{W}_b(t-s,B^x(s))ds $ and $\hat{V}(x):=\int_0^t\dot{W}(t-s,B^x(s))ds $. Notice that when $u_0\equiv1$,
  \begin{align*}
&u_{\theta,b}(t,x)=\mathbb{E}_B
\exp\bigg\{\theta\int_0^t\dot{W}_b(t-s,B^x(s))ds\bigg\}.
\end{align*}
  By using the similar computations to \eqref{2020-8-26 10:46:33}, we have
\begin{eqnarray}
&&\mathbb{E}\left|u_{\theta}(t,x)-u_{\theta,b}(t,x)\right|^n\nonumber\\
&&\le2^{n-1}\theta^n((2n-1)!!)^\frac{1}{2}\left\{(\mathbb{E}u_{2\theta}^n(t,x))^{\frac{1}{2}}
+(\mathbb{E}u_{2\theta,b}^n(t,x))^{\frac{1}{2}}\right\}
\Big\{\mathbb{E}\left|\hat{V}(x)-\hat{V}_b(y)\right|^2\Big\}^{\frac{n}{2}}.
\label{2019-12-9 14:05:02}
\end{eqnarray}
First, 
 we claim that there exists some fixed constant $C>0$ such that for all $b>1$ and $n\in\mathbb{N}_+$,
 \begin{eqnarray}
\mathbb{E}u_{2\theta,b}^n(t,x)\le\exp\{Cn^{\frac{4-\alpha}{2-\alpha}}\}.
\label{2019-12-9 14:09:33}
\end{eqnarray}
In fact, by Bochner representation, we have
\begin{eqnarray}
\mathbb{E}u_{2\theta,b}^n(t,x)=\mathbb{E}\exp\bigg\{2\theta^2\int_{\mathrm{R}^{d+1}}
\bigg|\sum\limits_{j=1}^n\int_0^{t}e^{i(s\eta+\xi\cdot B_j(s))}
ds\bigg|^2\mu_{0}(d\eta)
(q^{\frac{1}{2}}\ast \hat{l}_b (\xi))^2d\xi
\bigg\}.\label{2019-12-9 15:05:24}
\end{eqnarray}
For all $b>1$ and $(\xi_1,\cdots,\xi_d)\in \mathbb{R}^d$, 
one hand, by the inequality $(|x|+|y|)^{a}\le|x|^a+|y|^a$~($a\in(0,1)$), we find that there exists some $C>0$ such that
\begin{align}
\int_{\mathbb{R}} |\xi_1-y|^{\frac{\alpha_1-1}{2}}l_b(y)dy
&\le|\xi_1|^{\frac{\alpha_1-1}{2}}+\frac{1}{b^{\frac{\alpha_1-1}{2}}}
\int_{\mathbb{R}} |y|^{\frac{\alpha_1-1}{2}}l_1(y)dy\nonumber\\
&\le C(|\xi_1|^{\alpha_1-1}+1)^{\frac{1}{2}}.\label{2020-3-27 18:14:47}
\end{align}
On the other hand, when $2\le j\le d$, for all $\xi_j\neq0$, because $\frac{\alpha-1}{2}>-1$ and $l_{b|\xi_j|}(y)\le1$ ($y\neq 0$), there exists some fixed constant $C>0$ such that
\begin{align}
|\xi_j|^{\frac{\alpha_j-1}{2}}\int_{\mathbb{R}} |\frac{\xi_j}{|\xi_j|}-y|^{\frac{\alpha_j-1}{2}}l_{b|\xi_j|}(y)dy&\le |\xi_j|^{\frac{\alpha_j-1}{2}}\bigg( \int_{1/2}^2 |1-y|^{\frac{\alpha_j-1}{2}}dy
+2^{\frac{1-\alpha_j}{2}}\int_{\mathbb{R}}l_{b|\xi_j|}(y)dy\bigg)\nonumber\\
&\le C |\xi_j|^{\frac{\alpha_j-1}{2}}.\label{2020-05-27 23:16:29}
\end{align}

By \eqref{2020-3-27 18:14:47}, \eqref{2020-05-27 23:16:29} and $q^{\frac{1}{2}}\ast l_b(\xi_j)=\prod_{j=1}^d|\cdot|^{\frac{\alpha_j-1}{2}}\ast l_b(\cdot)(\xi_j)$, we can prove that
\begin{align}
(q^{\frac{1}{2}}\ast\hat{l}_b  (\xi))^2\le C q(\xi)+C\prod_{j=2}^d |\xi_j|^{\alpha_j-1}. \label{2020-05-28 20:04:24}
\end{align}
Furthermore, by \eqref{2019-12-9 15:05:24}, \eqref{2020-05-28 20:04:24} and Cauchy-Schwartz inequality, we have
\begin{align*}
&\mathbb{E}u_{2\theta,b}^n(t,x)\nonumber\\
&\le\left(\mathbb{E}u_{C}^n(t,x)\right)^{\frac{1}{2}}
\bigg(\mathbb{E}\exp\bigg\{C\int_{\mathrm{R}^{d+1}}
\bigg|\sum\limits_{j=1}^n\int_0^{t}e^{i(s\eta+\xi\cdot B_j(s))}
ds\bigg|^2\mu_{0}(d\eta)\prod_{j=2}^d |\xi_j|^{\alpha_j-1}
d\xi
\bigg\}\bigg)^{\frac{1}{2}}.
\end{align*}
So, by the above and Theorem \ref{2019-12-9 15:24:16}, we can obtain \eqref{2019-12-9 14:09:33}.

Second, we claim that there exist the constants $C, \nu>0$ such that for all $b>1$, it holds that
\begin{eqnarray}
\mathbb{E}\left|\hat{V}(x)-\hat{V}_b(y)\right|^2\le Cb^{-2\nu}.\label{2019-12-9 20:42:27}
\end{eqnarray}
Indeed, notice that the left side of (\ref{2019-12-9 20:42:27}) equals
\begin{eqnarray}
\int_{\mathbb{R}^d} \int_0^t\int_0^t\gamma_0(s-r)\mathbb{E} e^{i\xi\cdot(B(s)-B(r))}(q^{\frac{1}{2}}(\xi)-\hat{l}_b\ast q^{\frac{1}{2}}(\xi))^2 dsdrd\xi.\label{2019-12-9 17:31:39}
\end{eqnarray}
Let $Q_b(\xi):=q^{\frac{1}{2}}(\xi)-\hat{l}_b\ast q^{\frac{1}{2}}(\xi)$, by the inequality $(|a|+|b|)^2\le2(|a|^2+|b|^2)$, we have
\begin{align}
Q^2_b(\xi)&\le2\left(|\xi_1|^{\frac{\alpha_1-1}{2}}-|\cdot|^{\frac{\alpha_1-1}{2}}\ast \hat{l}_b(\cdot)(\xi_1)\right)^2\prod_{j=2}^d|\xi_j|^{\alpha_j-1}\nonumber\\
&+2\left(|\cdot|^{\frac{\alpha_1-1}{2}}\ast \hat{l}_b(\cdot)(\xi_1)\right)^2\bigg(\prod_{j=2}^d|\xi_j|^{\frac{\alpha_j-1}{2}}-\prod_{j=2}^d|\cdot|^{\frac{\alpha_j-1}{2}}\ast \hat{l}_b(\cdot)(\xi_j)\bigg)^2\nonumber\\
&:=h_1(\xi)+h_2(\xi).\label{2020-5-28 22:12:15}
 \end{align}

One hand, by the inequality $||x|+|y||^{a}\le|x|^a+|y|^a$ ($a\in(0,1)$) and \eqref{2020-5-29 23:31:19}, we have
\begin{eqnarray}
\int_{\mathbb{R}^d} \int_0^t\int_0^t\gamma_0(s-r)\mathbb{E} e^{i\xi\cdot(B(s)-B(r))}h_1(\xi) dsdrd\xi
\le Cb^{-(\alpha_1-1)}.\label{2020-05-29 23:35:44}
\end{eqnarray}
On the other hand, by \eqref{2020-3-27 18:14:47} and 
$\mathcal{F}|\cdot|^{\frac{a-1}{2}}(x)=C|x|^{-\frac{a+1}{2}}$~($ a\in(0,1)$, $C>0$, $x\in\mathbb{R}$), we have
\begin{align}
&\int_{\mathbb{R}^d} \int_0^t\int_0^t\gamma_0(s-r)\mathbb{E} e^{i\xi\cdot(B(s)-B(r))}h_2(\xi) dsdrd\xi\nonumber\\
&\le C\int_{\mathrm{R}} \int_0^t\int_0^t\gamma_0(s-r)\mathbb{E} e^{i\xi_1(B^1(s)-B^1(r))}(|\xi_1|^{\alpha_1-1}+1)\nonumber\\
&\cdot\mathbb{E}(\gamma_2-\gamma_2\mathcal{F}l_{2,b})\ast
(\gamma_2-\gamma_2\mathcal{F}l_{2,b})
(\hat{B}(s)-\hat{B}(r)) dsdrd\xi_1,\label{2020-5-29 22:05:22}
\end{align}
where $\gamma_2(x_2,\cdots,x_d):=\prod_{j=2}^d|x_j|^{-\frac{\alpha_j+1}{2}}$, $l_{2,b}(x_2,\cdots,x_d):=\prod_{j=2}^dl_b(x_j)$ and $\hat{B}:=(B^2,\cdots,B^d)$.
By using the inequality $1-\prod_{j=1}^da_j\le\sum_{j=1}^d(1-a_j) $~($(a_1,\cdots,a_d)\in(0,1)^d$), and take $\varepsilon\in(0,1)$ satisfying $2\varepsilon<\min_{2\le j\le d}\alpha_j$, 
 then for all $(x_2,\cdots,x_d)\in\mathbb{R}^{d-1}$,
 \begin{eqnarray*}
&&1-\mathcal{F}l_{2,b}(x_2,\cdots,x_d)
\le\sum\limits_{j=2}^d\Big(1-\Big(1-\frac{|x_j|}{b}\Big)\mathbf{1}_{|x_j|\le b}\Big)\nonumber\\
&&\le \sum\limits_{j=2}^d\left(\frac{|x_j|}{b}\mathbf{1}_{|x_j|\le b}+\frac{|x_j|^\varepsilon}{b^\varepsilon}\mathbf{1}_{|x_j|\ge b}\right)
\le\frac{1}{b^\varepsilon}\sum\limits_{j=2}^d |x_j|^\varepsilon.
\end{eqnarray*}
Hence, we obtain the estimation
\begin{align}
&(\gamma_2-\gamma_2\mathcal{F}l_{2,b})\ast
(\gamma_2-\gamma_2\mathcal{F}l_{2,b})(x_2,\cdots,x_d)\nonumber\\
&\le \frac{1}{b^{2\varepsilon}}\sum\limits_{j,k=2}^d\int_{\mathbb{R}^{d-1}} \prod_{l=2,l\neq j}^d|x_l-y_l|^{-\frac{\alpha_l+1}{2}}|x_j-y_j|^{-\frac{\alpha_j-2\varepsilon+1}{2}}
\prod_{l=2,l\neq k}^d|y_l|^{-\frac{\alpha_l+1}{2}}|y_k|^{-\frac{\alpha_k-2\varepsilon+1}{2}} dy.\label{2020-5-29 22:23:43}
\end{align}
 By \eqref{2020-5-29 22:05:22}, \eqref{2020-5-29 22:23:43} and \eqref{2020-5-29 23:31:19}, we find that there exists some $C>0$ such that
\begin{align}
&\int_{\mathbb{R}^d} \int_0^t\int_0^t\gamma_0(s-r)\mathbb{E} e^{i\xi\cdot(B(s)-B(r))}h_2(\xi) dsdrd\xi\nonumber\\
&\le \frac{C}{b^{2\varepsilon}}\sum\limits_{j,k=2}^d\int_{\mathbb{R}^{d}} \int_0^t\int_0^t\gamma_0(s-r)\mathbb{E} e^{i\xi\cdot(B(s)-B(r))}(|\xi_1|^{\alpha_1-1}+1)\nonumber\\
&\cdot\prod_{l=2,l\neq j,l\neq k}^d|\xi_l|^{-(\alpha_l-1)}|\xi_j|^{\frac{\alpha_j-2\varepsilon-1}{2}}|\xi_k|^{\frac{\alpha_k-2\varepsilon-1}{2}}dsdrd\xi\nonumber\\
&\le Cb^{-2\varepsilon}.\label{2020-5-29 22:51:50}
\end{align}
To sum up \eqref{2019-12-9 17:31:39}-\eqref{2020-05-29 23:35:44} and \eqref{2020-5-29 22:51:50},  there exists some $\nu>0$ such that for all $b>1$,
\begin{align*}
&\mathbb{E}\left|\hat{V}(x)-\hat{V}_b(y)\right|^2\le C(b^{-(\alpha_1-1)}+b^{-2\varepsilon})\le Cb^{-2\nu}.\nonumber
\end{align*}

At last, by \eqref{2019-12-9 14:05:02}, \eqref{2019-12-9 14:09:33} and \eqref{2019-12-9 20:42:27}, we observe that when the integer $n$ is enough large,
\begin{eqnarray*}
&&\mathbb{E}\left|u_{\theta}(t,x)-u_{\theta,b}(t,x)\right|^n\nonumber\\
&&\le2^{n-1}((2n-1)!!)^\frac{1}{2}\left\{(\mathbb{E}u_{2\theta}^n(t,x))^{\frac{1}{2}}
+(\mathbb{E}u_{2\theta,b}^n(t,x))^{\frac{1}{2}}\right\}
C^nb^{-n\nu}\nonumber\\
&&\le \exp\{Cn^{\frac{4-\alpha}{2-\alpha}}\}
b^{-\nu n},
\end{eqnarray*}
where is also due to $((2n-1)!!)^\frac{1}{2}=o(\exp\{n^{\frac{4-\alpha}{2-\alpha}}\})$.
\end{proof}




The following theorem is our main result in the section, which is slightly sharper than the lower bound of spatial asymptotics.
\begin{theorem}\label{2019-12-9 21:10:27}
When the initial value $u_0(x)\equiv1$, assume that condition (H-\ref{2020-4-29 12:23:47}) and condition (H-\ref{2020-4-29 12:23:35}) hold, then for all $t>0$, $k\in[0,1)$ and $\theta\neq0$, 
it holds that
\begin{eqnarray}
\liminf_{R\rightarrow\infty}\frac{1}{(\log R)^{\frac{2}{4-\alpha}}}\log\max\limits_{kR\le|x|\le R}u_\theta(t,x)
\ge 2^{-\frac{4}{4-\alpha}}|\theta|^{\frac{4}{4-\alpha}}t
\mathcal{E}_t^{\frac{2-\alpha}{4-\alpha}}
(2-\alpha)^{-\frac{2-\alpha}{4-\alpha}}
(4-\alpha) d^{\frac{2}{4-\alpha}}. \label{2019-12-9 21:10:34}
\end{eqnarray}
\end{theorem}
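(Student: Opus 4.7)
The plan is to use the localized noise $W_b$ from Section~\ref{2019-8-27 21:40:56} to set up effectively independent lower bounds for $u_{\theta,b}(t,\cdot)$ on a net inside $\{kR\le|x|\le R\}$ and then transfer to $u_\theta$ via Proposition~\ref{2019-12-9 21:01:47}. Write $\sigma(R):=(\log R)^{2/(4-\alpha)}$ and let $\kappa$ denote the right-hand side of \eqref{2019-12-9 21:10:34}. A direct algebraic check reveals the identity $\kappa=C_1(2d)^{2/(4-\alpha)}$, where $C_1$ is the constant on the right-hand side of \eqref{2020-6-19 15:36:24}; this identity is the bridge between the high moment asymptotics and the target spatial rate, and in particular explains why the dimensional factor $d^{2/(4-\alpha)}$ appears.

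First I would derive the matching single-point lower tail: for every $\bar{\alpha}>0$,
\begin{align*}
\liminf_{b\to\infty}\frac{1}{b^{(4-\alpha)/2}}\log\mathbb{P}\bigl(\log u_{\theta}(t,0)\ge b\bar{\alpha}\bigr)\ge-\tilde{C}(\bar{\alpha}),
\end{align*}
with the same rate function $\tilde{C}$ as in Proposition~\ref{2019-12-7 16:33:30}. This follows from Corollary~\ref{2020-6-19 15:33:57} by a large-deviations inversion applied to the non-negative variable $G_\theta(N)^{1/2}$: the same Legendre computation used to pass from \eqref{2019-12-7 11:39:00} to \eqref{2020-6-19 15:36:24} can be reversed by a standard lower-bound-for-tail argument (as in Theorem 1.2.4 of \cite{R2}) to produce the matching tail rate $\tilde{C}(\bar{\alpha})$. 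The critical observation is $\tilde{C}(\kappa)=d$, so the single-point probability of exceeding $\kappa\sigma(R)$ at level $b=\sigma(R)$ is at least $R^{-d+o(1)}$.

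The next stage is a spatial Borel--Cantelli argument. Since $\gamma_b$ is compactly supported in a ball of radius $O(1/b)$, the noise $W_b$ at spatially separated regions is independent; to make $u_{\theta,b}(t,x)$ depend only on $W_b$ in a bounded neighbourhood of $x$, I would introduce a Brownian exit-time truncation $u^A_{\theta,b}(t,x):=\mathbb{E}_B\bigl[e^{\theta\int_0^t\dot{W}_b(t-s,B^x(s))ds}\mathbf{1}_{\tau_A>t}\bigr]$, where $\tau_A$ is the exit time of $B^x$ from $B_d(x,A)$, and check by standard Gaussian tail bounds (and a Cauchy--Schwarz splitting of the exponential from the indicator) that the truncation error vanishes in every $L^p$ as $A\to\infty$. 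Taking a maximal $(2A+2C/b)$-separated net $\{x_1,\ldots,x_M\}\subset\{kR\le|x|\le R\}$ of cardinality $M\gtrsim(1-k^d)R^d$ makes the variables $u^A_{\theta,b}(t,x_i)$ mutually independent, and the single-point estimate yields $\mathbb{P}(\log u^A_{\theta,b}(t,x_i)\ge(\kappa-\eta)\sigma(R))\ge R^{-(d-\delta)}$ for some $\delta=\delta(\eta)>0$. Then
\begin{align*}
\mathbb{P}\Bigl(\max_{1\le i\le M}\log u^A_{\theta,b}(t,x_i)<(\kappa-\eta)\sigma(R)\Bigr)\le\bigl(1-R^{-(d-\delta)}\bigr)^M\le\exp(-cR^{\delta}),
\end{align*}
which is summable along $R_n=2^n$, so Borel--Cantelli delivers the almost sure lower bound $(\kappa-\eta)\sigma(R)$ for $u^A_{\theta,b}$. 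Choosing $b=b(R)\to\infty$ slowly and applying the moment error of Proposition~\ref{2019-12-9 21:01:47} together with a union bound over the $M$ net points shows $\max_i|u_\theta(t,x_i)-u^A_{\theta,b}(t,x_i)|=o(e^{(\kappa-\eta)\sigma(R)})$ on the good event, and letting $\eta\downarrow 0$ through a countable sequence produces \eqref{2019-12-9 21:10:34}.

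The main obstacle is the coupled calibration of the three scales. The localization parameter $b=b(R)\to\infty$ must grow slowly enough that the factor $\exp\{Cn^{(4-\alpha)/(2-\alpha)}\}b^{-\nu n}$ in Proposition~\ref{2019-12-9 21:01:47}, summed over $R^d$ centres, still beats the target growth $e^{(\kappa-\eta)\sigma(R)}$; the exit-time radius $A$ must be large enough for the truncation error to be negligible but small enough to leave $M$ of order $R^d$ up to logarithmic factors; and the variational constant obtained in Step~1 with $\gamma$ replaced by $\gamma_b$ must converge upward to $\mathcal{E}_t$ as $b\to\infty$, which amounts to a monotone limit on the variational problem \eqref{2020-9-30 18:40:12}. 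The last convergence is what lets Step~1 use the correct constant $\kappa$ (rather than a strictly smaller $\kappa_b$) in the limit, and is the most delicate piece of bookkeeping in the argument.
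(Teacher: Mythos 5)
Your architecture (localize the noise, build a polynomial-cardinality net of effectively independent points, prove a sharp single-point tail lower bound, then Borel--Cantelli) is genuinely different from the paper's proof, which never establishes a single-point tail lower bound: the paper bounds $\log\max_{z\in\mathcal{N}_m}u_{\theta,b(m)}(t,z)$ below by $-\frac{1}{N(m)}\log|\mathcal{N}_m|+\frac{1}{N(m)}\log\mathbb{E}_B\exp\{\max_{z}\xi_m(t,z)\}$ as in \eqref{2019-12-10 14:15:10}, uses that, conditionally on the Brownian motions confined by the stopping times $\tau_j^m$, the $\xi_m(t,z)$ are i.i.d.\ Gaussian with deviation $S_m$, and then evaluates $\mathbb{E}\exp\{\psi(\log m)^{1/2}S_m\}$ by Corollary \ref{2020-6-19 15:33:57} (your identity $\kappa=C_1(2d)^{2/(4-\alpha)}$ is the same bookkeeping, with $\psi\uparrow(2d)^{1/2}$). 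The genuine gap in your route is Step 1. Theorem 1.2.4 of \cite{R2} and the Legendre computation behind \eqref{2020-6-19 15:36:24} concern the sequence $G_\theta(N)$ at speed $N^{(4-\alpha)/(2-\alpha)}$; they say nothing about the tail of the fixed random variable $\log u_\theta(t,0)$. Converting the moment limit of Proposition \ref{2019-12-9 15:24:16} into a lower bound on $\mathbb{P}(\log u_\theta(t,0)\ge b\bar{\alpha})$ with the matching constant $\tilde{C}(\bar{\alpha})$, valid at all large levels (which your Borel--Cantelli at the levels $(\kappa-\eta)\sigma(R_n)$ requires), needs an exponential Tauberian argument for a fixed nonnegative variable (Kasahara type), not the cited sequence-level large deviation; a naive inversion gives only a subsequential bound and Paley--Zygmund gives a strictly worse constant. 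Moreover, the tail bound you actually invoke is for the localized, exit-time truncated field $u^A_{\theta,b}$, so you would have to prove precise moment (or tail) asymptotics with the full constant $\mathcal{E}_t$ for that modified field: the truncation cannot be absorbed by a Cauchy--Schwarz error estimate, because the error term involves $\mathbb{E}u_{2\theta}^{2N}$, which grows at a strictly faster stretched-exponential rate than the main term as $N\to\infty$; and the convergence of the localized constant to $\mathcal{E}_t$ is exactly what the paper proves concretely via the asymptotic homogeneity of $\hat{l}_b\ast q^{1/2}$ and the reverse H\"older step in \eqref{2019-12-11 10:11:44}--\eqref{2019-12-11 10:10:25}, none of which is supplied in your sketch.

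A second, concrete error is the geometry of the localization: $\mathcal{F}\hat{l}_b(x)=\prod_{j}(1-|x_j|/b)\mathbf{1}_{\{|x_j|\le b\}}$, so the kernel $\tilde{\gamma}\mathcal{F}\hat{l}_b$, and hence $\gamma_b$, is supported in a box of side of order $b$, which grows with $b$; it is not a ball of radius $O(1/b)$. This is precisely why the paper's mesh spacing is $5d^{1/2}\exp\{M(\log m)^{2/(4-\alpha)}\}$, of the same order as $b(m)$, and why the confinement radius in $\tau_j^m$ matches it. Your separation scale $2A+2C/b$ therefore does not yield independence of the $u^A_{\theta,b}(t,x_i)$; the spacing must grow like $b(R)$ (still $R^{o(1)}$, so the cardinality count survives), and the three-scale calibration you single out as the main obstacle has to be redone on this corrected premise. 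In short, the alternative route is plausible in outline, but the two key inputs, a sharp all-levels single-point tail lower bound for the localized truncated field and the correct support/independence geometry, are missing or wrong as stated, whereas the paper's argument is designed to avoid the first issue altogether.
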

\begin{proof}
Let $k'\in (k,1)$ and $\{m\}$ be the sequence of positive integers, and
by the fact that $\lim\limits_{m\rightarrow\infty}\frac{(\log m)^{\frac{2}{4-\alpha}}}{(\log (m+1))^{\frac{2}{4-\alpha}}}=1$, we can obtain that
\begin{align*}
&\liminf_{R\rightarrow\infty}\frac{1}{(\log R)^{\frac{2}{4-\alpha}}}\log\max\limits_{kR\le|x|\le R}u_\theta(t,x)\nonumber\\
&\ge\liminf_{R\rightarrow\infty}\frac{1}{(\log R)^{\frac{2}{4-\alpha}}}\log\max\limits_{k'(R-1)\le|x|\le R}u_\theta(t,x)\nonumber\\
&\ge\liminf_{m\rightarrow\infty}\frac{1}{(\log m)^{\frac{2}{4-\alpha}}}\log\max\limits_{k' m\le|x|\le m}u_\theta(t,x).
\end{align*}
Hence, we only need to show that for all $k\in(0,1)$, the following inequality holds
\begin{align*}
\liminf_{m\rightarrow\infty}\frac{1}{(\log m)^{\frac{2}{4-\alpha}}}\log\max\limits_{km\le|x|\le m}u_\theta(t,x)\ge 2^{-\frac{4}{4-\alpha}}|\theta|^{\frac{4}{4-\alpha}}t
\mathcal{E}_t^{\frac{2-\alpha}{4-\alpha}}
(2-\alpha)^{-\frac{2-\alpha}{4-\alpha}}
(4-\alpha) d^{\frac{2}{4-\alpha}} .
\end{align*}
In Proposition \ref{2019-12-9 21:01:47}, for all $m,M>1$, take $n(m):=\lfloor(\log m)^{\frac{2-\alpha}{4-\alpha}}\rfloor$ and $b(m)=\exp\{M(n(m))^{\frac{2}{2-\alpha}}\}$ instead of $n$ and $b$, respectively. Then, for all fixed $\mu>0$, there exist enough large $m_0$ and $M$ such that for all $m>m_0$,
 \begin{align}
\mathbb{E}\left|u_{\theta}(t,x)-u_{\theta,b(m)}(t,x)\right|^{n(m)}
&\le \exp\{C(n(m))^{\frac{4-\alpha}{2-\alpha}}\}
\exp\{-\nu M(n(m))^{\frac{4-\alpha}{2-\alpha}}\}\nonumber\\
&\le\exp\{-\mu (n(m))^{\frac{4-\alpha}{2-\alpha}}\}.\label{2019-12-9 22:40:08}
\end{align}
Furthermore, let dynamic mesh $\mathcal{N}_m:=5d^{\frac{1}{2}}\exp\{M(\log m)^{\frac{2}{4-\alpha}}\}\mathbb{Z}^d\cap \{x\in \mathbb{R}^{d};k m\le |x|\le m\}$. Recall that $|\mathcal{N}_m|$ represents the number of elements of the set $\mathcal{N}_m$, and for all $\delta>0$, there always exists an enough large $m_0$ such that for all $m>m_0$,
\begin{eqnarray}
m^{d-\delta}\le|\mathcal{N}_m|\le m^{d}.\label{2019-12-9 21:37:28}
\end{eqnarray}
Before transform the lower bound of spatial asymtptoics, we first need to prove the fact: there exists some $M>0$ such that
\begin{eqnarray}
\limsup_{m\rightarrow\infty}\frac{1}{(\log m)^{\frac{2}{4-\alpha}}}
\log\max\limits_{z\in\mathcal{N}_{m}}\left|u_{\theta}(t,z)
-u_{\theta,b(m)}(t,z)\right|<0\qquad\mbox{a.s..}\label{2019-12-9 23:34:41}\label{2019-12-9 23:34:32}
\end{eqnarray}
Indeed, for all $\delta>0$, we have
\begin{eqnarray}
&& \mathbb{P}\Big(\log\max\limits_{z\in\mathcal{N}_m}\left|u_{\theta}(t,z)
-u_{\theta,b(m)}(t,z)\right|\ge-\delta(\log m)^{\frac{2}{4-\alpha}}\Big) \nonumber\\
&&\le|\mathcal{N}_m|\mathbb{P}\left(\log\left|u_{\theta}(t,z)
-u_{\theta,b(m)}(t,z)\right|\ge-\delta(\log m)^{\frac{2}{4-\alpha}}\right).\label{2020-5-30 22:21:02}
\end{eqnarray}
When $m$ is enough large, by Chebyshev inequliaty and \eqref{2019-12-9 22:40:08}, we have
\begin{eqnarray*}
&&\mathbb{P}\left(\log\left|u_{\theta}(t,z)
-u_{\theta,b(m)}(t,z)\right|\ge-\delta(\log m)^{\frac{2}{4-\alpha}}\right)\nonumber\\
&&\le\exp\{\delta n(m)(\log m)^{\frac{2}{4-\alpha}} \}\mathbb{E}|u_{\theta}(t,z)
-u_{\theta,b(m)}(t,z)|^{n(m)}\nonumber\\
&&\le m^{-(\frac{\mu}{2}-\delta)},
\end{eqnarray*}
where take $\mu=2(d+2+\delta)$, and by \eqref{2019-12-9 21:37:28} and \eqref{2020-5-30 22:21:02}, we find that there exists some enough large $M$ such that
\begin{eqnarray*}
 \mathbb{P}\left(\log\max\limits_{z\in\mathcal{N}_m}\left|u_{\theta}(t,z)
-u_{\theta,b(m)}(t,z)\right|\ge\delta(\log m)^{\frac{2}{4-\alpha}}\right) \le R^{-2}.
\end{eqnarray*}
 By the above and using Borel-Cantelli lemma, we can obtain \eqref{2019-12-9 23:34:41}.

Next step, based on \eqref{2019-12-9 23:34:32}, we claim that for the sequence consisted of positive integers $\{m\}$, if
\begin{eqnarray}
\liminf_{m\rightarrow\infty}\frac{1}{(\log m)^{\frac{2}{4-\alpha}}}\log\max\limits_{z\in\mathcal{N}_{m}}u_{\theta,b(m)}(t,z)\ge0\qquad\mbox{a.s.,}\label{2019-12-10 19:42:03}
\end{eqnarray}
then
\begin{eqnarray}
&&\liminf_{m\rightarrow\infty}\frac{1}{(\log m)^{\frac{2}{4-\alpha}}}\log\max\limits_{km\le|z|\le m}u_{\theta}(t,z)\nonumber\\
&&\ge\liminf_{m\rightarrow\infty}\frac{1}{(\log m)^{\frac{2}{4-\alpha}}}\log\max\limits_{z\in\mathcal{N}_{m}}u_{\theta,b(m)}(t,z)\qquad\mbox{a.s..}\label{2019-12-9 23:45:03}
\end{eqnarray}
In fact,
one hand, notice that
\begin{eqnarray*}
&&\liminf_{m\rightarrow\infty}\frac{1}{(\log m)^{\frac{2}{4-\alpha}}}\log\max\limits_{km\le|z|\le m}u_{\theta}(t,z)\nonumber\\
&&\ge\liminf_{m\rightarrow\infty}\frac{1}{(\log m)^{\frac{2}{4-\alpha}}}\log\max\limits_{z\in\mathcal{N}_{m}}u_{\theta}(t,z)\qquad\mbox{a.s..}
\end{eqnarray*}
On the other hand,
by the inequality $\log(|a|+|b|)\le\log2+\log |a|\vee\log |b|$, we have
\begin{eqnarray*}
&&\liminf_{m\rightarrow\infty}\frac{1}{(\log m)^{\frac{2}{4-\alpha}}}\log\max\limits_{z\in\mathcal{N}_{m}}u_{\theta,b(m)}(t,z)\nonumber\\
&&\le\liminf_{m\rightarrow\infty}\frac{1}{(\log m)^{\frac{2}{4-\alpha}}}\log\max\limits_{z\in\mathcal{N}_{m}}u_{\theta}(t,z)\nonumber\\
&&\bigvee\limsup_{m\rightarrow\infty}\frac{1}{(\log m)^{\frac{2}{4-\alpha}}}
\log\max\limits_{z\in\mathcal{N}_{m}}\left|u_{\theta}(t,z)
-u_{\theta,b(m)}(t,z)\right|\qquad\mbox{a.s..}
\end{eqnarray*}
To sum up the above two computations, and by \eqref{2019-12-9 23:34:41} and \eqref{2019-12-10 19:42:03}, we can prove  \eqref{2019-12-9 23:45:03}.


In view of the above relation in \eqref{2019-12-9 23:45:03}, 
to complete the proof of Theorem \ref{2019-12-9 21:10:27}, we only need to prove that for all $|\theta|,t>0$, it holds that
\begin{eqnarray}
&&\liminf_{m\rightarrow\infty}\frac{1}{(\log m)^{\frac{2}{4-\alpha}}}\log\max\limits_{z\in\mathcal{N}_{m}}u_{\theta,b(m)}(t,z)\nonumber\\
&&\ge 2^{-\frac{4}{4-\alpha}}|\theta|^{\frac{4}{4-\alpha}}t
\mathcal{E}_t^{\frac{2-\alpha}{4-\alpha}}
(2-\alpha)^{-\frac{2-\alpha}{4-\alpha}}
(4-\alpha) d^{\frac{2}{4-\alpha}} \qquad\mbox{a.s.}.\label{2019-12-10 20:27:27}
\end{eqnarray}
Indeed, for all integers $m$ and $p$, let $N(m):=p\lfloor (\log m)^{\frac{2-\alpha}{4-\alpha}}\rfloor$ and it satisfies
\begin{eqnarray}
p^{-\frac{4-\alpha}{2(2-\alpha)}}(N(m))^{\frac{4-\alpha}{2(2-\alpha)}}\le(\log m)^{\frac{1}{2}}\le \left(p^{-1}(N(m))+1\right)^{\frac{4-\alpha}{2(2-\alpha)}}.\label{2020-6-19 23:32:58}
\end{eqnarray}
We point that $\mathbb{E}_B$ is the expectation with respect to the family of independent Brownian motion $\{B_j\}_{j\ge1}$, and $\mathbb{E}_W$ is the expectation about $W_{b(m)}$, where the $\{B_j\}_{j\ge1}$ is independent with $W_{b(m)}$. To simplify the notations, let
\begin{align*}
&\xi_m(t,z):=|\theta|\sum\limits_{j=1}^{N(m)}\int_0^t\dot{W}_{b(m)}(t-s,B_j^z(s))ds,\\
&S_m(t):=|\theta|\bigg(\sum\limits_{j,k=1}^{N(m)}\int_0^t\int_0^t\gamma_0(s-r)
\gamma_{b(m)}(B_j(s)-B_k(r))dsdr\bigg)^{\frac{1}{2}},\\
&Z_m:=\exp\left\{\psi(\log m)^{\frac{1}{2}}S_m\right\},
\end{align*}
where let $\delta>0$ be any fixed and sufficiently closed to $0$ and the constant  $\psi:=(2d-4\delta)^{\frac{1}{2}}$. It is readily found that $S_m(t)$ is standard deviation  of $\xi_m(t,z)$.
 Next, by Fubini theorem, observe that
\begin{eqnarray}
&&\log\max\limits_{z\in\mathcal{N}_{m}}u_{\theta,b(m)}(t,z)\ge \frac{1}{N(m)}\log \max\limits_{z\in\mathcal{N}_m}u_{\theta,b(m)}^{N(m)}(t,z)\nonumber\\
&&\ge\frac{1}{N(m)} \log\bigg[|\mathcal{N}_m|^{-1}\sum\limits_{z\in\mathcal{N}_m}
u_{\theta,b(m)}^{N(m)}(t,z)\bigg]\nonumber\\
&&\ge-\frac{1}{N(m)}\log|\mathcal{N}_m|+\frac{1}{N(m)}\log
\mathbb{E}_B\exp\bigg\{\max\limits_{z\in\mathcal{N}_m}\xi_m(t,z)\bigg\}.\label{2019-12-10 14:15:10}
\end{eqnarray}
In \eqref{2019-12-10 14:15:10}, observe that
\begin{eqnarray}
&&\mathbb{E}_B\exp\Big\{\max\limits_{z\in\mathcal{N}_m}\xi_m\Big\}\ge
\mathbb{E}_B\bigg[\exp\Big\{\max\limits_{z\in\mathcal{N}_m}\xi_m\Big\}
\mathbf{1}_{\max\limits_{z\in\mathcal{N}_m}\xi_m\ge\psi(\log m)^{\frac{1}{2}}S_m}
\bigg]\nonumber\\
&&\ge 
\mathbb{E}\left[Z_m
\right]\left(1-\eta_m\right),\label{2019-12-10 20:04:23}
\end{eqnarray}
where
\begin{eqnarray*}
\eta_m:=(\mathbb{E}\left[Z_m
\right])^{-1}\mathbb{E}_B\bigg[Z_m
\mathbf{1}_{\max\limits_{z\in\mathcal{N}_m}\xi_m\le\psi(\log m)^{\frac{1}{2}}S_m}
\bigg].
\end{eqnarray*}

 Moreover, we will prove that the error term $\eta_m$ in \eqref{2019-12-10 20:04:23} is negligible, that is,
\begin{eqnarray}
\lim\limits_{m\rightarrow\infty}\eta_{m}=0\qquad\mbox{a.s..}\label{2019-12-10 18:50:52}
\end{eqnarray}
In fact, for all $\varepsilon>0$, by Chebyshev inequality, we have
\begin{align*}
\mathbb{P}(\eta_m\ge\varepsilon)
&\le\varepsilon^{-1}(\mathbb{E}\left[Z_m
\right])^{-1}\mathbb{E}_B\otimes\mathbb{E}_W\bigg[Z_m
\mathbf{1}_{\max\limits_{z\in\mathcal{N}_m}\xi_m\le\psi(\log m)^{\frac{1}{2}}S_m}
\bigg]\nonumber\\
&=\varepsilon^{-1}(\mathbb{E}\left[Z_m
\right])^{-1}\mathbb{E}_B\left[Z_m
\mathbb{P}_W \Big(\max\limits_{z\in\mathcal{N}_m}\xi_m\le\psi(\log m)^{\frac{1}{2}}S_m\Big)
\right].
\end{align*}
Besides, let the stopping time $\tau_j^m:=\inf\{s\ge0;|B_j(s)|\ge d^{\frac{1}{2}}\exp\{M(\log m)^{\frac{2}{4-\alpha}}\}\}$, then
\begin{align}
\mathbb{P}(\eta_m\ge\varepsilon)
&\le\varepsilon^{-1}(\mathbb{E}\left[Z_m
\right])^{-1}\mathbb{E}_B\bigg[
\mathbb{P}_W \Big(\max\limits_{z\in\mathcal{N}_m}\xi_m\le\psi(\log m)^{\frac{1}{2}}S_m\Big)Z_m\prod\limits_{j=1}^{N(m)}
\mathbf{1}_{\tau_j\ge t}
\bigg]\nonumber\\
&
+\varepsilon^{-1}(\mathbb{E}\left[Z_m
\right])^{-1}\mathbb{E}\bigg[Z_m\Big(1-\prod\limits_{j=1}^{N(m)}
\mathbf{1}_{\tau_j\ge t}\Big)
\bigg]\nonumber\\
&:=\hat{I}_1+\hat{I}_2.\label{2019-12-10 17:08:29}
\end{align}
For the first term $\hat{I}_1$, conditioning on the family of Brownian motion $\{B_j\}_{j\ge1}$, $\{\xi_m(t,z);z\in\mathcal{N}_m\}$ is a family of centered independent and identically distributed Gaussian process, which all have (conditional) standard deviation $S_m(t)$. To explain the independence and identical distribution, we only need to check their covariances 
\begin{eqnarray*}
\mathbb{E}[\xi_m(t,z)\xi_m(t,z')]=\theta^2\sum\limits_{j,k=1}^{N(m)}
\int_0^t\int_0^t
\gamma_0(s-r)\gamma_{b(m)}(B_j(s)-B_k(r)+z-z')dsdr.
\end{eqnarray*}

(a) When $z=z'$, it holds that $\mathbb{E}[\xi_m(t,z)]^2=S_m(t)$.

(b) When $z\neq z'$, one hand, recall that the $\gamma_{b(m)}$ is supported in $ B_d(0, 2d^{\frac{1}{2}}\exp\{M(\log m)^{\frac{2}{4-\alpha}}\})$.
 On the other hand, observe that for all $1\le j\le N(m)$, $|B_j|\le d^{\frac{1}{2}}\exp\{M(\log m)^{\frac{2}{4-\alpha}}\}$ in $\hat{I}_1$,
  and $|z-z'|\ge5d^{\frac{1}{2}}\exp\{M(\log m)^{\frac{2}{4-\alpha}}\}$. Hence,
  $\mathbb{E}[\xi_m(t,z)\xi_m(t,z')]=0.$

Furthermore, let $U$ be a standard normal random variable. When $m$ is enough large, by the estimation of Gaussian tail probability, the inequality $1-x\le e^{-x}$ ($x\in[0,1]$) and \eqref{2019-12-9 21:37:28}, we have
\begin{eqnarray*}
&&\mathbb{P}_W \left(\max\limits_{z\in\mathcal{N}_m}\xi_m\le\psi(\log m)^{\frac{1}{2}}S_m\right)
=\left(1-\mathbb{P}\left(U\ge\psi(\log m)^{\frac{1}{2}}\right)\right)^{|\mathcal{N}_m|}\nonumber\\
&&\le\left(1-\exp\left\{-(d-2\delta)\log m\right\}\right)^{|\mathcal{N}_m|}
\le\exp\{-m^\delta\},
\end{eqnarray*}
where recall that $\psi=(2d-4\delta)^{\frac{1}{2}}$.
Hence,
\begin{eqnarray}
\hat{I}_1\le\varepsilon^{-1}\exp\{-m^\delta\}.\label{2019-12-10 17:07:36}
\end{eqnarray}

For the second term $\hat{I}_2$, notice that $\mathbb{E}\left[Z_m
\right]\ge1$, and by Cauchy-Schwartz inequality, we have
\begin{align}
\hat{I}_2
&\le\varepsilon^{-1}(\mathbb{E}\left[Z_m
\right])^{-1}N(m)\mathbb{E}\left[Z_m\mathbf{1}_{\tau_1< t}
\right]\nonumber\\
&\le\varepsilon^{-1}N(m)\left(\mathbb{E}\exp\{2\psi(\log m)^{\frac{1}{2}}S_m\}\right)^{\frac{1}{2}}\left(\mathbb{P}(\tau_1< t)\right)^{\frac{1}{2}}.\label{2019-12-10 18:16:44}
\end{align}
One hand, recall that $N(m)=p\lfloor (\log m)^{\frac{2-\alpha}{4-\alpha}}\rfloor$, and by the inequality $2ab\le a^2+b^2$ and \eqref{2019-12-9 14:09:33}, we find that there exists some $C>0$ such that for all enough large $m$,
\begin{align}
\left(\mathbb{E}\exp\{2\psi(\log m)^{\frac{1}{2}}S_m\}\right)^{\frac{1}{2}}
&\le\exp\{\frac{1}{2}\psi^2\log m\}\left(\mathbb{E}\exp\{ S_m^2\}\right)^{\frac{1}{2}}\nonumber\\
&\le\exp\left\{Cp^{\frac{4-\alpha}{2-\alpha}}\log m\right\}.\label{2019-12-10 18:15:33}
\end{align}
On the other hand, by Gaussian reflection principle and the estimation of Gaussian tail probability, we have
\begin{align}
\left(\mathbb{P}(\tau_1< t)\right)^{\frac{1}{2}}&=
\left(\mathbb{P}(\max\limits_{s\in[0,t]}|B(s)|\ge d^{\frac{1}{2}}\exp\{M(\log m)^{\frac{2}{4-\alpha}}\})\right)^{\frac{1}{2}}\nonumber\\
&\le\exp\left\{-C\exp\{2M(\log m)^{\frac{2}{4-\alpha}}\}\right\}.\label{2019-12-10 18:32:54}
\end{align}
By \eqref{2019-12-10 18:16:44}-\eqref{2019-12-10 18:32:54}, we have
\begin{eqnarray}
\hat{I}_2
\le\varepsilon^{-1} m^{-2}.\label{2019-12-10 18:38:48}
\end{eqnarray}
Based on \eqref{2019-12-10 17:08:29}, \eqref{2019-12-10 17:07:36} and \eqref{2019-12-10 18:38:48}, we find that when $m$ is enough large,
\begin{eqnarray*}
\mathbb{P}(\eta_m\ge\varepsilon)\le\varepsilon^{-1}\exp\{-m^\delta\}+\varepsilon^{-1} m^{-2}.
\end{eqnarray*}
At last, by using Borel-Cantelli lemma, we can complete the proof of \eqref{2019-12-10 18:50:52}.

By \eqref{2019-12-9 21:37:28} and \eqref{2019-12-10 14:15:10}-\eqref{2019-12-10 18:50:52}, we have
\begin{eqnarray*}
&&\liminf_{m\rightarrow\infty}\frac{1}{(\log m)^{\frac{2}{4-\alpha}}}\log\max\limits_{z\in\mathcal{N}_{m}}u_{\theta,b(m)}(t,z)\nonumber\\
&&\ge\liminf_{p\rightarrow\infty}\Big\{\liminf_{m\rightarrow\infty}-\frac{1}{p\log m}\log|\mathcal{N}_m|+\liminf_{m\rightarrow\infty}\frac{1}{p\log m}\log \left(1-\eta_m\right)\nonumber\\
&&+\liminf_{m\rightarrow\infty}
\frac{1}{(\log m)^{\frac{2}{4-\alpha}}N(m)}\log \mathbb{E}\left[Z_m
\right]\Big\}\nonumber\\
&&\ge\liminf_{p\rightarrow\infty}\liminf_{m\rightarrow\infty}
\frac{1}{(\log m)^{\frac{2}{4-\alpha}}N(m)}\log \mathbb{E}\left[Z_m
\right].
\end{eqnarray*}

Hence, to complete the proof of \eqref{2019-12-10 20:27:27}, we only need to prove that for all $|\theta|,t>0$, it holds that
\begin{eqnarray}
&&\liminf_{\psi\uparrow(2d)^{\frac{1}{2}}}\liminf_{p\rightarrow\infty}
\liminf_{m\rightarrow\infty}
\frac{1}{(\log m)^{\frac{2}{4-\alpha}}N(m)}
\log \mathbb{E}\left[Z_m
\right]
\nonumber\\
&&\ge 2^{-\frac{4}{4-\alpha}}|\theta|^{\frac{4}{4-\alpha}}t
\mathcal{E}_t^{\frac{2-\alpha}{4-\alpha}}
(2-\alpha)^{-\frac{2-\alpha}{4-\alpha}}
(4-\alpha) d^{\frac{2}{4-\alpha}}.\label{2020-6-20 13:43:00}
\end{eqnarray}
Indeed, by Bochner representation, we find that for all $\varepsilon>0$, it holds that
 \begin{align*}
&J_l:=\mathbb{E}\exp\bigg\{|\theta|\psi(\log m)^{\frac{1}{2}}\bigg(\sum\limits_{j,k=1}^{N(m)}\int_0^t\int_0^t\gamma_0(s-r)
\gamma_{b(m)}(B_j(s)-B_k(r))dsdr\bigg)^{\frac{1}{2}}\bigg\}\nonumber\\
&
\ge\mathbb{E}\exp\bigg\{|\theta|\psi(\log m)^{\frac{1}{2}}\bigg(\int_{|\xi_1|,\cdots,
|\xi_d|\ge\varepsilon}\int_{\mathrm{R}}
\bigg|
\sum\limits_{j=1}^{N(m)}\int_0^{t}e^{i(s\eta+\xi\cdot B_j(s))}
ds\bigg|^2\mu_{0}(d\eta)(\hat{l}_{b(m)}\ast q^{\frac{1}{2}}(\xi))^2
d\xi\bigg)^{\frac{1}{2}}
\bigg\}.
\end{align*}
For the function $\hat{l}_{b}\ast q^{\frac{1}{2}}(\xi)$, by triangle inequality, we can prove that $l_{b}\ast |\cdot|^{\frac{\alpha_j-1}{2}}(\xi)$ satisfies the asymptotical homogeneity, more specifically, for all $\varepsilon>0$ and $1\le j\le d$,
\begin{eqnarray*}
\liminf_{b\rightarrow\infty}\inf\limits_{|\xi_j|\ge\varepsilon}
\frac{\int_{\mathrm{R}} |\xi_j-y|^{\frac{\alpha_j-1}{2}}l_b(y)dy}{
|\xi_j|^{\frac{\alpha_j-1}{2}}}\ge1.\label{2019-12-10 22:51:04}
\end{eqnarray*}
To simplify it, for all Borel set $A\subseteq \mathbb{R}^d$, let
\begin{align*}
I(A):=\bigg(\int_{A}\int_{\mathrm{R}}
\bigg|
\sum\limits_{j=1}^{N(m)}\int_0^{t}e^{i(s\eta+\xi\cdot B_j(s))}
ds\bigg|^2\mu_{0}(d\eta)
\mu(d\xi)\bigg)^{\frac{1}{2}}.
\end{align*}
 Let $\bar{p}>1$, and by 
\eqref{2019-12-10 22:51:04}, triangle inequality and reverse H\"{o}lder inequality, we find that for all $b_1\in(0,1)$, there exists an enough large $m$ such that
\begin{align}
&J_l\ge\mathbb{E}\bigg[\exp\bigg\{|\theta|\psi(\log m)^{\frac{1}{2}}b_1I(\mathbb{R}^d)
-|\theta|\psi(\log m)^{\frac{1}{2}}\sum\limits_{j=1}^dI(\{\xi\in\mathbb{R}^d;|\xi_j|
<\varepsilon\})
\bigg\}\bigg]\nonumber\\
&\ge\bigg(\mathbb{E}\exp\bigg\{\frac{|\theta|\psi b_1}{\bar{p}}(\log m)^{\frac{1}{2}}I(\mathbb{R}^d)
\bigg\}\bigg)^{\bar{p}}
\prod\limits_{j=1}^d\bigg(\mathbb{E}\exp\bigg\{\frac{|\theta|d\psi}
{\bar{p}-1}(\log m)^{\frac{1}{2}}I(\{\xi\in\mathbb{R}^d;|\xi_j|
<\varepsilon\})
\bigg\}\bigg)^{\frac{1-\bar{p}}{d}}.\label{2019-12-11 10:11:44}
\end{align}
For all $1\le j\le d$, let $\gamma_j^\varepsilon(x):=\int_{|\xi_j|<\varepsilon}e^{i\xi_j x}|\xi_j|^{\alpha_j-1}d\xi_j$ and $q_j(\xi):=\prod_{k=1,k\neq j}^d|\xi_k|^{\alpha_k-1}$, then $\gamma_j^\varepsilon(x)\le\gamma_j^\varepsilon(0).$ 
   So, according to \eqref{2020-6-19 23:32:58}, 
   there exists $C>(\gamma_j^\varepsilon(0))^{\frac{1}{2}}\frac{|\theta|d}{\bar{p}-1}\psi2^{\frac{4-\alpha}{2(2-\alpha)}}$ such that 
\begin{align}
&\liminf_{m\rightarrow\infty}
\frac{1-\bar{p}}{(\log m)^{\frac{2}{4-\alpha}}N(m)}\log\mathbb{E}\exp\bigg\{\frac{|\theta|d}
{\bar{p}-1}\psi(\log m)^{\frac{1}{2}}I(\{\xi\in\mathbb{R}^d;|\xi_j|
<\varepsilon\})
\bigg\}\nonumber\\
&\ge\liminf_{\lambda\rightarrow\infty}\liminf_{m\rightarrow\infty}
\frac{(1-\bar{p})p^{\frac{2}{2-\alpha}}}{(N(m))^{\frac{4-\alpha}{2-\alpha}}}
\log\mathbb{E}\exp\bigg\{C\lambda
\int_{\mathrm{R}^{d-1}}
\bigg|
\sum\limits_{j=1}^{N(m)}\int_0^{t}e^{i(s\eta+\xi\cdot B_j(s))}
ds\bigg|^2\mu_{0}(d\eta)q_j(\xi)
d\xi
\bigg\}\nonumber\\
&+\liminf_{\lambda\rightarrow\infty}
\frac{(1-\bar{p})p^{\frac{2}{2-\alpha}}}{\lambda}C\nonumber\\
&\ge\liminf_{\lambda\rightarrow\infty}\liminf_{m\rightarrow\infty}
(1-\bar{p})p^{\frac{2}{2-\alpha}}
(N(m))^{\frac{4-\alpha+\alpha_j}{2-\alpha+\alpha_j}-\frac{4-\alpha}{2-\alpha}}|C\lambda|^{\frac{2}{2-\alpha}}t^{\frac{4-\alpha}{2-\alpha}}
\mathcal{E}_t=0,\label{2019-12-11 10:10:25}
\end{align}
where we use the inequality $2ab\le \lambda^{-1}a^2+\lambda b^2$ ($\lambda>0$) and Theorem \ref{2019-12-9 15:24:16}.
At last, by \eqref{2019-12-11 10:11:44}, \eqref{2019-12-11 10:10:25} and \eqref{2020-6-19 23:32:58}, we have
\begin{align*}
&\liminf_{\psi\uparrow(2d)^{\frac{1}{2}}}\liminf_{p\rightarrow\infty}
\liminf_{m\rightarrow\infty}
\frac{1}{(\log m)^{\frac{2}{4-\alpha}}N(m)}\log J_l 
\nonumber\\
&\ge\liminf_{\bar{p},b_1\rightarrow1}
\liminf_{\psi\uparrow(2d)^{\frac{1}{2}}}\liminf_{p\rightarrow\infty}
\liminf_{m\rightarrow\infty}
\frac{\bar{p}p^{\frac{2}{2-\alpha}}}
{\left((N(m))+1\right)^{\frac{2}{2-\alpha}}N(m)}\log \mathbb{E}\exp\bigg\{\frac{|\theta|\psi b_1(N(m))^{\frac{4-\alpha}{2(2-\alpha)}}}{\bar{p}p^{\frac{4-\alpha}{2(2-\alpha)}}}
I(\mathbb{R}^d)
\bigg\}\nonumber\\
&\ge 2^{-\frac{4}{4-\alpha}}|\theta|^{\frac{4}{4-\alpha}}t
\mathcal{E}_t^{\frac{2-\alpha}{4-\alpha}}
(2-\alpha)^{-\frac{2-\alpha}{4-\alpha}}
(4-\alpha) d^{\frac{2}{4-\alpha}},
\end{align*}
where the last second step is due to Corollary \ref{2020-6-19 15:33:57}.
\end{proof}

\section{The proof of Theorem \ref{2020-9-17 13:54:43} and Theorem \ref{2020-9-3 23:30:48} }\label{2020-9-20 19:35:05}

\setcounter{equation}{0}
\renewcommand\theequation{6.\arabic{equation}}

The following Lemma \ref{2020-9-4 09:23:54} and Lemma \ref{2020-9-4 08:49:53} are applied to proving Theorem \ref{2020-9-17 13:54:43}. 
\begin{lemma}\label{2020-9-4 09:23:54}
Under condition \eqref{2020-9-7 14:58:37}, condition (H-\ref{2020-4-29 12:23:47}) and condition (H-\ref{2020-4-29 12:23:35}), and let
\begin{align*}
I_e(x):=\int_{\mathbb{R}^d}\mathbb{E}_B\exp
\bigg\{-\theta \int_{\rho t}^t\dot{W}(t-s,B^{x,y}_{0,t}(s))ds\bigg\} p_t(y-x)u_0(dy),
\end{align*}
then for some $\rho\in(0,1]$ and  all $t>0$ and $\theta\neq0$, it holds that
\begin{align}
\limsup_{\rho\rightarrow1}\limsup_{R\rightarrow\infty}\frac{1}{(\log R)^{\frac{2}{4-\alpha}}}\log
\max\limits_{|x|\le R}I_e(x) 
\le0.\label{2020-9-4 09:24:07}
\end{align}
\end{lemma}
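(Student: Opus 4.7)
The plan is to run the moment-based spatial upper-bound machinery of Sections \ref{2020-5-16 22:13:56}--\ref{2020-5-24 12:16:39} applied to $I_e(x)$ and then exploit the fact that the effective time horizon of the noise inside $I_e$ is only $(1-\rho)t$, which shrinks to zero as $\rho\uparrow1$. Since $-\theta\dot{W}$ and $\theta\dot{W}$ have the same conditional law given $B$, I may assume $\theta>0$ without loss of generality. The point is that $I_e(x)$ has exactly the form of the Feynman--Kac representation \eqref{2020-8-24 19:05:21} except that the temporal integral is restricted to $[\rho t,t]$; consequently every Bochner representation, Brownian-bridge comparison (cf.\ \eqref{2020-9-3 09:42:37}) and hypercontractivity / Girsanov reduction used before carries over after replacing $[0,t]$ by $[\rho t,t]$, a window of length $(1-\rho)t$.

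First I would mimic the proof of Proposition \ref{2019-12-9 15:24:16} and Theorem \ref{2020-8-29 16:47:39}: the bridge comparison \eqref{2020-9-3 09:42:37} reduces to free Brownian motions, the hypercontractivity step \eqref{2020-9-9 13:50:05} together with \eqref{2020-9-23 14:12:15} collapses the problem to a single-particle functional, and the Girsanov/OU argument of Step 3 of the proof of Proposition \ref{2019-12-9 15:24:16} handles the remaining exponential eigenvalue. Combined with condition \eqref{2020-9-7 14:58:37} absorbing the factor $p_t\ast u_0(x)$ as in Theorem \ref{2020-8-29 16:47:39}, this should yield, for every $\beta>0$,
\begin{align*}
\limsup_{N\to\infty}\frac{1}{N^{(4-\alpha)/(2-\alpha)}}\log\max_{|x|\le\exp\{\beta N^{(4-\alpha)/(2-\alpha)}\}}\mathbb{E}I_e(x)^N\le 2^{-\frac{2}{2-\alpha}}\theta^{\frac{4}{2-\alpha}}((1-\rho)t)^{\frac{4-\alpha}{2-\alpha}}\mathcal{E}_{(1-\rho)t}.
\end{align*}
Feeding this moment bound into the verbatim analogs of Proposition \ref{2019-12-7 16:33:30} (Chebyshev tail bound), Proposition \ref{2019-12-6 23:32:02} (H\"older modulus via Corollary \ref{2020-5-14 18:51:37}), and the dyadic mesh / Borel--Cantelli argument of Theorem \ref{2019-12-7 17:08:43} yields the almost-sure bound
\begin{align*}
\limsup_{R\to\infty}\frac{1}{(\log R)^{\frac{2}{4-\alpha}}}\log\max_{|x|\le R}I_e(x)\le K_{\alpha,d}\,\theta^{\frac{4}{4-\alpha}}\cdot(1-\rho)t\cdot\mathcal{E}_{(1-\rho)t}^{\frac{2-\alpha}{4-\alpha}}\qquad\mbox{a.s.,}
\end{align*}
for an explicit constant $K_{\alpha,d}$ obtained by the same optimization over $\beta$ as in the proof of Theorem \ref{2019-12-7 17:08:43}.

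Finally I would let $\rho\uparrow 1$ using the last assertion of Proposition \ref{2019-12-9 15:24:16}: $s^{(4-\alpha)/(2-\alpha)}\mathcal{E}_s\to 0$ as $s\downarrow 0$. Since $\mathcal{E}_s\ge 0$ (being the leading constant in a nonnegative log-Laplace rate), raising to the power $(2-\alpha)/(4-\alpha)$ gives $s\,\mathcal{E}_s^{(2-\alpha)/(4-\alpha)}\to 0$, and applied with $s=(1-\rho)t$ the right-hand side of the displayed bound vanishes, proving \eqref{2020-9-4 09:24:07}.

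The hard part will be to verify line by line that the subadditivity/scaling scheme behind Proposition \ref{2019-12-9 15:24:16} is legitimate on the translated window $[\rho t,t]$ instead of $[0,t]$. The issue is that for $j\ne k$ the law of $B_j(s)-B_k(r)$ does \emph{not} coincide with that of $B_j(s-\rho t)-B_k(r-\rho t)$, so one cannot simply translate time before the single-particle reduction. However, once the Jensen-type bound \eqref{2020-9-23 14:12:15} replaces the sum over particles by a single-particle functional, and once the Girsanov switch to the stationary measure $\mathbb{P}^k$ has been made, the remaining time integral lives on a process with stationary increments, at which stage the translation $[\rho t,t]\mapsto[0,(1-\rho)t]$ is free and produces the constant $((1-\rho)t)^{(4-\alpha)/(2-\alpha)}\mathcal{E}_{(1-\rho)t}$ displayed above. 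Once this ordering of steps is checked, the rest of the adaptation is mechanical.
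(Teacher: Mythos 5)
Your strategy---rerunning the moment/tail/H\"older/Borel--Cantelli machinery of Sections \ref{2020-5-16 22:13:56}--\ref{2020-5-24 12:16:39} for $I_e$ on the window $[\rho t,t]$ and then sending $\rho\uparrow1$ via $s^{\frac{4-\alpha}{2-\alpha}}\mathcal{E}_s\to0$---is viable in outline, but it is a genuinely different and much heavier route than the paper's. The paper never re-proves any moment asymptotics for $I_e$: using $-W\stackrel{d}{=}W$, the free-Brownian-motion form \eqref{2020-9-25 22:44:36}--\eqref{2020-9-26 00:43:32} of the Feynman--Kac formula and the independence of the increment $B(t)-B(\rho t)$ from $B(\rho t)$, it writes $I_e(x)=\int_{\mathbb{R}^d}u_{\theta}(\upsilon t,x+y)\,p_{\rho t}(y)\,dy$ with $\upsilon:=1-\rho$, i.e.\ as a heat-kernel average of the already-analyzed solution at the short time $\upsilon t$. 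A dyadic decomposition of the $y$-integral, Gaussian tail bounds for $p_{\rho t}$, and the already-established Theorem \ref{2019-12-7 17:08:43} applied at time $\upsilon t$ (legitimate since condition \eqref{2020-9-7 14:58:37} holds for every $t>0$) give $\max_{|x|\le R}I_e(x)\le\max_{|x|\le2R}u_\theta(\upsilon t,x)+Ce^{-CR^2}$ for large $R$, and the conclusion follows from the same limit $\upsilon t\,\mathcal{E}_{\upsilon t}^{\frac{2-\alpha}{4-\alpha}}\to0$ that you invoke at the end. So the final limiting step is shared, but everything before it is accomplished in one identity rather than by rebuilding three sections of estimates.

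If you do pursue your route, two points need repair. First, your proposed fix for the window problem is not correct as stated: under $\mathbb{P}^k$ the process is an OU process started at the origin, so it does not have stationary increments, and the Girsanov switch to $\mathbb{P}^k$ occurs inside Step 3 of the proof of Proposition \ref{2019-12-9 15:24:16}, after the reduction to free Brownian motions, so the translation cannot be postponed to that stage. The legitimate ways to handle $[\rho t,t]$ are either to time-reverse the bridge ($B_{0,t}(t-\cdot)\stackrel{d}{=}B_{0,t}(\cdot)$), which maps the window to $[0,\upsilon t]$ where the comparison \eqref{2020-9-23 14:12:15} applies (note $\upsilon t\le t/2$ once $\rho\ge1/2$), or to condition at time $\rho t$ and discard the resulting fixed shifts $B_j(\rho t)-B_k(\rho t)$ by the same nonnegativity/phase-dropping argument as in \eqref{2020-9-3 09:42:37}. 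Second, the ``verbatim analog'' of Proposition \ref{2019-12-6 23:32:02} requires a H\"older-modulus estimate for $x\mapsto I_e(x)$, i.e.\ a version of Proposition \ref{2019617133628} for the restricted window; this does hold, because on the Fourier side all terms are nonnegative so the sub-window variance of the Gaussian increments is dominated by the full-window one, but it must be stated and proved rather than quoted from Corollary \ref{2020-5-14 18:51:37}. With these repairs your argument closes, at the cost of redoing work that the paper's one-line semigroup identity makes unnecessary.
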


\begin{proof}
  Let $\upsilon:=1-\rho$, and by  $-W\stackrel{d}{=}W$ and the independence of increment of B.M., we consider that for all $\varepsilon>0$,
\begin{align*}
&\mathbb{E}_B\left[\exp\left\{\theta  \int_{\rho t}^{ t}\dot{W}(t-s,B^x(s))ds\right\}p_\varepsilon\ast u_0(B^x(t))
\right]\nonumber\\
&=\mathbb{E}_B\bigg[\exp\bigg\{\theta \int_{0}^{ \upsilon  t}\dot{W}(\upsilon t-s,B(s+\rho t)-B(\rho t)+B(\rho t)+x)ds\bigg\}\nonumber\\
&p_\varepsilon\ast u_0(B(t)-B(\rho t)+B(\rho t)+x)
\bigg]\nonumber\\
&=\int_{\mathrm{R}^{d}} u_{\theta,\varepsilon }(\upsilon t,x+y)
p_{\rho t}(y)dy,
\end{align*}
where the notation $u_{\theta,\varepsilon }$ is from \eqref{2020-9-25 22:44:36}.
Let $\varepsilon\rightarrow0$ in the above, and by the estimation of Gaussian tail probability and Theorem \ref{2019-12-7 17:08:43}, we find that when $R$ is enough large, there exists some $C>0$ such that
\begin{align*}
\max\limits_{|x|\le R}I_e(x)
&=\max\limits_{|x|\le R}\bigg(\int_{ |y|\le R} u_{\theta }(\upsilon t,x+y)p_{\rho t}(y)dy+\sum\limits_{k=2}^\infty\int_{2^{k-1}R\le |y|\le 2^{k}R} u_{\theta }(\upsilon t,x+y)p_{\rho t}(y)dy\bigg)\nonumber\\
&\le\max\limits_{|x|\le 2R}u_{\theta }(\upsilon t,x)+\sum\limits_{k=2}^\infty \max\limits_{|x|\le (2^{k}+1)R}
u_{\theta }(\upsilon t,x)\exp\{-C2^{2(k-1)}R^2\}\nonumber\\
&\le\max\limits_{|x|\le 2R}u_{\theta }(\upsilon t,x)+C\exp\{-CR^{2}\}.
\end{align*}
Furthermore,  by Theorem \ref{2019-12-7 17:08:43}, we have
\begin{align*}
&\limsup_{\rho\rightarrow1}\limsup_{R\rightarrow\infty}\frac{1}{(\log R)^{\frac{2}{4-\alpha}}}\log
I_e(x)\nonumber\\
&\le 
\limsup_{\rho\rightarrow1}\limsup_{R\rightarrow\infty}\frac{1}{(\log R)^{\frac{2}{4-\alpha}}}\log
\max\limits_{|x|\le 2R}u_{\theta }(\upsilon t,x)\bigvee \limsup_{R\rightarrow\infty}\frac{-CR^{2}}{(\log R)^{\frac{2}{4-\alpha}}}\nonumber\\
&\le\limsup_{\rho\rightarrow1}2^{-\frac{4}{4-\alpha}}|\theta|^{\frac{4}{4-\alpha}}(1-\rho)t
(\mathcal{E}_{(1-\rho)t})^{\frac{2-\alpha}{4-\alpha}}
(2-\alpha)^{-\frac{2-\alpha}{4-\alpha}}
(4-\alpha) d^{\frac{2}{4-\alpha}}=0.
\end{align*}
\end{proof}

\begin{lemma}\label{2020-9-4 08:49:53}
Under  
 condition (H-\ref{2020-4-29 12:23:47}) and condition (H-\ref{2020-4-29 12:23:35}),   for all $\varepsilon$, $t>0$,
 $k\in[0,1)$
 and $\theta\neq0$, let $\tau_{1}^R:=\inf\{s\ge0;|B(s)|\ge R\}$, then
\begin{align}
&\liminf_{R\rightarrow\infty}
\frac{1}{(\log R)^{\frac{2}{4-\alpha}}}\log\max\limits_{kR\le|x|\le R}\mathbb{E}_B
\exp\left\{\theta \int_0^{ t}\dot{W}(t-s,B^x(s))ds\right\}
\nonumber\\
&=\liminf_{R\rightarrow\infty}
\frac{1}{(\log R)^{\frac{2}{4-\alpha}}}\log\max\limits_{kR\le|x|\le R}\mathbb{E}_B\left[\exp\left\{\theta \int_0^{ t}\dot{W}(t-s,B^x(s))ds\right\}
\mathbf{1}_{\tau_{1}^R>t}
\right].\label{2020-9-4 09:03:10}
\end{align}
\end{lemma}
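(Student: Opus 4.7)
My plan is to establish the $\ge$ direction of \eqref{2020-9-4 09:03:10} (the $\le$ direction is immediate since $\mathbf{1}_{\tau_1^R>t}\le 1$ makes the truncated expectation a lower bound for the full one). The key is to show that the contribution from Brownian paths exiting the ball of radius $R$ before time $t$ is super-exponentially small at the scale $f(R):=(\log R)^{2/(4-\alpha)}$, hence negligible compared to the bulk.

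First I decompose pathwise in $W$:
\[
\mathbb{E}_B\exp\left\{\theta\int_0^{t}\dot W(t-s,B^x(s))ds\right\}=A_R(x)+B_R(x),
\]
where $A_R(x)$ carries the indicator $\mathbf{1}_{\tau_1^R>t}$ and $B_R(x)$ the indicator $\mathbf{1}_{\tau_1^R\le t}$. Applying the Cauchy--Schwarz inequality with respect to $\mathbb{E}_B$ yields $B_R(x)\le \hat{u}_{2\theta}(t,x)^{1/2}\mathbb{P}(\tau_1^R\le t)^{1/2}$. Theorem \ref{2019-12-7 17:08:43} applied with $2\theta$ in place of $\theta$ (condition \eqref{2020-9-7 14:58:37} being trivially satisfied since here $p_t\ast u_0\equiv 1$) gives $\limsup_R \log\max_{|x|\le R}\hat{u}_{2\theta}(t,x)/f(R)\le C_{\ast}$ a.s., for a deterministic constant $C_{\ast}$, while the Gaussian reflection principle (as already used in \eqref{2019-12-10 18:32:54}) yields $\mathbb{P}(\tau_1^R\le t)\le C_d\exp\{-R^2/(2dt)\}$. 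Combining,
\[
\max_{kR\le|x|\le R}B_R(x)\le\exp\bigl\{(C_{\ast}/2+o(1))f(R)-R^2/(4dt)\bigr\},
\]
and since $R^2\gg f(R)$, this forces $\log\max_x B_R(x)/f(R)\to-\infty$ almost surely.

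To conclude, I invoke Theorem \ref{2019-12-9 21:10:27} (which applies directly since the LHS integrand is exactly $\hat{u}_\theta(t,x)$), giving $\liminf_R\log C_R^{(1)}/f(R)\ge c>0$ a.s., where $C_R^{(1)}:=\max_{kR\le|x|\le R}(A_R(x)+B_R(x))$ and $C_R^{(2)}:=\max_{kR\le|x|\le R}A_R(x)$. Choosing $x_R^{\ast}$ attaining the maximum defining $C_R^{(1)}$, I obtain $C_R^{(2)}\ge A_R(x_R^{\ast})=C_R^{(1)}-B_R(x_R^{\ast})\ge C_R^{(1)}-\max_x B_R(x)$. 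Since $\max_x B_R(x)/C_R^{(1)}\to 0$ a.s. by the previous step, $C_R^{(2)}\ge C_R^{(1)}/2$ eventually, hence $\liminf\log C_R^{(2)}/f(R)\ge\liminf\log C_R^{(1)}/f(R)$, and together with the trivial reverse direction $C_R^{(2)}\le C_R^{(1)}$ this yields \eqref{2020-9-4 09:03:10}. The crux of the argument is the super-exponential decay of the Brownian exit probability at scale $f(R)$, which is immediate from $R^2\gg(\log R)^{2/(4-\alpha)}$; no other serious obstacle arises beyond combining the already-established spatial upper bound (Theorem \ref{2019-12-7 17:08:43}) and lower bound (Theorem \ref{2019-12-9 21:10:27}) for $\hat{u}_\theta$.
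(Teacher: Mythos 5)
Your proof is correct and uses the same core mechanism as the paper's: split the expectation over the event $\{\tau_1^R\le t\}$, bound the exit contribution by Cauchy--Schwarz as $\hat{u}_{2\theta}(t,x)^{1/2}\,\mathbb{P}(\tau_1^R\le t)^{1/2}$, control the first factor almost surely by Theorem \ref{2019-12-7 17:08:43} with $u_0\equiv1$ (so condition \eqref{2020-9-7 14:58:37} is trivial) and the second by the Gaussian exit estimate $e^{-cR^2}$, so that the error is super-exponentially negligible at scale $(\log R)^{2/(4-\alpha)}$. Where you diverge is only in the concluding step: you additionally invoke the lower bound of Theorem \ref{2019-12-9 21:10:27} to guarantee that your $C_R^{(1)}$ dominates the error, and then run the algebraic sandwich $C_R^{(2)}\ge C_R^{(1)}-\max_x B_R(x)\ge C_R^{(1)}/2$ eventually. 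The paper avoids any appeal to the lower bound by using $\log(a+b)\le\log 2+\log a\vee\log b$ together with $\liminf_R(x_R\vee y_R)\le\liminf_R x_R\vee\limsup_R y_R$ and the fact that the error term's $\limsup$ equals $-\infty$; this is slightly leaner and works even without knowing the truncated quantity grows. Your extra dependence on Theorem \ref{2019-12-9 21:10:27} is not circular (it is established in Section \ref{2019-8-27 21:40:56} independently of this lemma, and the two are only combined afterwards in the proof of Theorem \ref{2020-9-17 13:54:43}), so the argument stands. One cosmetic slip: at the outset you swap the names of the two directions --- the immediate inequality (truncated $\le$ full) gives the ``$\ge$'' side of \eqref{2020-9-4 09:03:10}, and what your argument actually proves, correctly, is the nontrivial ``$\le$'' side.
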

\begin{proof}
For all $\varepsilon>0$, by Cauchy inequality and Theorem \ref{2019-12-7 17:08:43} ($u_0\equiv1$), we have
\begin{align*}
&\liminf_{R\rightarrow\infty}
\frac{1}{(\log R)^{\frac{2}{4-\alpha}}}\log\max\limits_{kR\le|x|\le R}\mathbb{E}_B\exp\left\{\theta\int_0^{ t}\dot{W}(t-s,B^x(s))ds\right\}\nonumber\\
&\le\liminf_{R\rightarrow\infty}
\frac{1}{(\log R)^{\frac{2}{4-\alpha}}}\log\max\limits_{kR\le|x|\le R}\mathbb{E}_B\left[\exp\left\{\theta \int_0^{ t}\dot{W}(t-s,B^x(s))ds\right\}\mathbf{1}_{\tau_{1}^R> t}\right]\nonumber\\
&\bigvee \limsup_{R\rightarrow\infty}\frac{1}{(\log R)^{\frac{2}{4-\alpha}}}\log\bigg\{\max\limits_{kR\le|x|\le R}\mathbb{E}_B\exp\left\{2\theta \int_0^{ t}\dot{W}(t-s,B^x(s))ds\right\}\mathbb{P}(\tau_{1}^R\le t)\bigg\}\nonumber\\
&=\liminf_{R\rightarrow\infty}
\frac{1}{(\log R)^{\frac{2}{4-\alpha}}}\log\max\limits_{kR\le|x|\le R}\mathbb{E}_B\left[\exp\left\{\theta \int_0^{ t}\dot{W}(t-s,B^x(s)ds\right\}\mathbf{1}_{\tau_{1}^R> t}\right],
\end{align*}
where the last step is because of
Theorem \ref{2019-12-7 17:08:43} and the estimation of Gaussian tail probability. 
The above reverse relation is obvious.
\end{proof}

\noindent\textbf{The proof of Theorem \ref{2020-9-17 13:54:43}:}
The proof of the upper bound is included in Theorem \ref{2019-12-7 17:08:43}. We will show the lower bound.
 For all $q>p>1$ satisfying $\frac{1}{p}+\frac{1}{q}=1$, by Feynman-Kac formula based on Brownian bridge in \eqref{2020-8-24 19:05:21} and reverse H\"{o}lder inequality, we have
\begin{align}
&\max\limits_{k R\le|x|\le R}
\int_{\mathbb{R}^d}\mathbb{E}_B\exp\bigg\{\theta \int_0^t\dot{W}(t-s,B^{x,y}_{0,t}(s))ds\bigg\} p_t(y-x)u_0(dy)\nonumber\\
&\ge \bigg(\max\limits_{k R\le|x|\le R}
\int_{\mathbb{R}^d} \mathbb{E}_B\left[\exp\left\{\frac{\theta}{p} \int_0^{ \rho t}\dot{W}(t-s,B^{x,y}_{0,t}(s))ds\right\}p_t(y-x)u_0(dy)
\right]
\bigg)^{p}\nonumber\\
&\cdot\left(\max\limits_{|x|\le R}\int_{\mathbb{R}^d} \mathbb{E}_B\left[\exp\left\{-\frac{\theta q}{p} \int_{\rho t}^{ t}\dot{W}(t-s,B^{x,y}_{0,t}(s))ds\right\}p_t(y-x)u_0(dy)
\right]\right)^{-\frac{p}{q}}.\label{2020-9-13 14:02:06}
\end{align}
Because of Lemma \ref{2020-9-4 09:23:54}, 
we can
 take $\rho$ sufficiently closed to $1$ such that the second term is ignorable.
 For the first term,
let stopping time $\tau_{1}^R:=\inf\{s\ge0;|B(s)|\ge R\}$ and $\upsilon:=1-\rho$, then
 the following inequality holds
\begin{align*}
&\max\limits_{k R\le|x|\le R}\int_{\mathbb{R}^d}\mathbb{E}_B\exp\left\{\frac{\theta}{p} \int_0^{\rho t}\dot{W}(t-s,B^{x,y}_{0,t}(s))ds\right\} p_t(y-x)u_0(dy)\nonumber\\
&=\max\limits_{k R\le|x|\le R}\lim\limits_{\bar{\varepsilon}\rightarrow0}\mathbb{E}_B\left[\exp\left\{\frac{\theta}{p} \int_0^{\rho t}\dot{W}(t-s,B^x(s))ds\right\}p_{\bar{\varepsilon}}\ast u_0(B(t)-B(\rho t)+B^x(\rho t))
\right]\nonumber\\
&\ge\max\limits_{k R\le|x|\le R} \mathbb{E}_B\left[\exp\left\{\frac{\theta}{p}\int_0^{\rho t}\dot{W}(t-s,B^x(s))ds\right\} p_{\upsilon t}\ast u_0(B^x(\rho t))
\mathbf{1}_{\tau_{1}^R>\rho t}
\right]\nonumber\\
&\ge \max\limits_{k R\le|x|\le R}\mathbb{E}_B\left[\exp\left\{\frac{\theta}{p} \int_0^{\rho t}\dot{W}(t-s,B^x(s))ds\right\}
\mathbf{1}_{\tau_{1}^R>\rho t}
\right]\inf\limits_{|x|\le 2R} p_{\upsilon t}\ast u_0(x) .
\end{align*}
So, by the condition in case \eqref{2020-9-7 15:16:57} and  Lemma \ref{2020-9-4 08:49:53}, we have
\begin{align*}
&\liminf_{R\rightarrow\infty}\frac{1}{(\log R)^{\frac{2}{4-\alpha}}}\log\max\limits_{k R\le|x|\le R}\int_{\mathbb{R}^d}\mathbb{E}_B\exp\left\{\frac{\theta}{p} \int_0^{\rho t}\dot{W}(t-s,B^{x,y}_{0,t}(s))ds\right\} p_t(y-x)u_0(dy)\nonumber\\
&\ge\liminf_{R\rightarrow\infty}\frac{1}{(\log R)^{\frac{2}{4-\alpha}}}\log
\max\limits_{k R\le|x|\le R}
\mathbb{E}_B
\exp\left\{\frac{\theta}{p} \int_0^{\rho t}\dot{W}( t-s,B^x(s))ds\right\}.
\end{align*}
Furthermore,  by \eqref{2020-9-13 14:02:06},  Lemma \ref{2020-9-4 09:23:54} and $\{\dot{W}(t-s,\cdot)\}_{s\in[0,\rho t]}\stackrel{d}{=}\{\dot{W}( \rho t-s,\cdot)\}_{s\in[0,\rho t]}$, we have
\begin{align*}
&\liminf_{R\rightarrow\infty}\frac{1}{(\log R)^{\frac{2}{4-\alpha}}}\log\max\limits_{k R\le|x|\le R}u_\theta(t,x)
\\
&\ge\liminf_{p\rightarrow1} \liminf_{\rho\rightarrow1}\liminf_{R\rightarrow\infty}\frac{1}{(\log R)^{\frac{2}{4-\alpha}}}\log\max\limits_{k R\le|x|\le R}
\mathbb{E}_B
\exp\left\{\frac{\theta}{p} \int_0^{\rho t}\dot{W}( \rho t-s,B^x(s))ds\right\}\nonumber\\
&\ge 2^{-\frac{4}{4-\alpha}}|\theta|^{\frac{4}{4-\alpha}}t
\mathcal{E}_t^{\frac{2-\alpha}{4-\alpha}}
(2-\alpha)^{-\frac{2-\alpha}{4-\alpha}}
(4-\alpha) d^{\frac{2}{4-\alpha}},
\end{align*}
where the last inequality is due to
Theorem \ref{2019-12-9 21:10:27}.

\noindent\textbf{The proof of Theorem \ref{2020-9-3 23:30:48}:} We first prove the result that for all $k\ge1$, it holds that
  \begin{align}
&\lim\limits_{R\rightarrow\infty}\frac{1}{\nu_k(R)}\log \max\limits_{R\le|x|\le kR}u_\theta(t,x)=-1.\label{2020-9-24 15:06:45}
\end{align}
 Indeed, for all $q,p>1$ satisfying $\frac{1}{p}+\frac{1}{q}=1$, by 
  H\"{o}lder inequality and  \eqref{2020-8-24 19:05:21}, we have
\begin{align}
&\max\limits_{R\le|x|\le kR}\int_{\mathbb{R}^d} \mathbb{E}_B\exp\left\{\theta \int_0^{ t}\dot{W}(t-s,B^{x,y}_{0,t}(s))ds\right\}p_{t}(y-x)u_0(dy)
\nonumber\\
&\le\bigg(\max\limits_{R\le|x|\le kR}\int_{\mathbb{R}^d} \mathbb{E}_B\exp\left\{q\theta \int_0^{ t}\dot{W}(t-s,B^{x,y}_{0,t}(s))ds\right\}p_{t}(y-x)u_0(dy)
\bigg)^{\frac{1}{q}}\nonumber\\
&\cdot\Big(\max\limits_{R\le|x|\le kR}p_{t}\ast u_0(x)\Big)^{\frac{1}{p}}.\label{2020-9-14 01:09:55}
\end{align}
 For the above, by using Theorem \ref{2019-12-7 17:08:43} and the condition in case \eqref{2020-9-17 13:28:20}, we know that the first term is negligible when $R$ tends to infinity. Recall that $\nu_k(R)=0\vee-\log\max\limits_{ R \le|x|\le kR} p_{t}\ast u_0(x)$, and by \eqref{2020-9-14 01:09:55}, we obtain the upper bound
\begin{align*}
&\limsup\limits_{R\rightarrow\infty}\frac{1}{\nu_k(R)}\log \max\limits_{R \le|x| \le k R}u_\theta(t,x)\nonumber\\
&\le\limsup\limits_{p\rightarrow1}\limsup\limits_{R\rightarrow\infty}
\frac{1}{p\nu_k(R)}\log\max\limits_{ R\le|x|\le kR}
p_{t}\ast u_0(x)\le-1.
\end{align*}
The proof of the lower bound is similar.  For all $q,p>1$ satisfying $\frac{1}{p}+\frac{1}{q}=1$,  by
 reverse H\"{o}lder inequality, the fact
 that $-W\stackrel{d}{=}W$ and Theorem \ref{2019-12-7 17:08:43}, we have
 \begin{align*}
&\liminf\limits_{R\rightarrow\infty}\frac{1}{\nu_k(R)}\log \max\limits_{R\le|x|\le kR}u_\theta(t,x)\nonumber\\
&\ge\liminf\limits_{p\rightarrow1}\liminf\limits_{R\rightarrow\infty}-\frac{p}{q\nu_k(R)}
\log \max\limits_{R\le|x|\le kR} \int_{\mathbb{R}^d}\mathbb{E}_B\exp\left\{-\frac{\theta q}{p} \int_{0}^{t}\dot{W}(t-s,B^{x,y}_{0,t}(s))ds\right\} \nonumber\\
&\cdot p_{t}(y-x)u_0(dy) +\liminf\limits_{p\rightarrow1}\liminf\limits_{R\rightarrow\infty}
\frac{p}{\nu_k(R)}\log\max\limits_{R\le|x|\le kR}
p_{t}\ast u_0(x)\nonumber\\
&\ge-1.
\end{align*}
For the second result, that is \eqref{2020-9-27 11:15:43},  by \eqref{2020-9-24 15:06:45}, we find that for all small $\varepsilon>0$ and $R>1$, there exists some enough large $k_0$ such that
\begin{align}
\max\limits_{|x|\ge R}\log u_\theta(t,x)&= \max\limits_{R\le|x|\le 2^{k_0}R} \log u_\theta(t,x)\bigvee\sup\limits_{k\ge k_0}\max\limits_{2^{k}R\le |x|\le 2^{k+1}R}\log u_\theta(t,x)\nonumber\\
&\le \max\limits_{R\le|x|\le 2^{k_0}R}\log u_\theta(t,x)\bigvee (-1+\varepsilon)\nu_2(2^{k_0}R).\label{2020-9-25 12:21:39}
\end{align}
Furthermore, recall that $\nu(R)=0\vee-\log\max\limits_{|x|\ge R} p_{t}\ast u_0(x)$, and by \eqref{2020-9-24 15:06:45} and  the facts that $\nu_{2^{k_0}}(R)\ge\nu(R)$ and $\nu_2(2^{k_0}R)\ge\nu(R)$, 
we have
\begin{align*}
&\limsup\limits_{R\rightarrow\infty}\frac{1}{\nu(R)}\log \max\limits_{|x| \ge R}u_\theta(t,x)\\
&\le \limsup\limits_{R\rightarrow\infty}-\frac{\nu_{2^{k_0}}(R)}{\nu(R)}\bigvee
\limsup\limits_{R\rightarrow\infty}
(-1+\varepsilon)\frac{\nu_2(2^{k_0}R)}{\nu(R)}\\
&\le-1+\varepsilon.
\end{align*}
  Let $\varepsilon\rightarrow0$ in the above, then we can prove the upper bound in \eqref{2020-9-27 11:15:43}.

On the other hand, for all small $\varepsilon>0$ and $q,p>1$ satisfying $\frac{1}{p}+\frac{1}{q}=1$, by reverse H\"{o}lder inequality and  \eqref{2020-9-25 12:21:39}, we find that when $R$ is enough large,
\begin{align}
&\max\limits_{|x|\ge R}\int_{\mathbb{R}^d} \mathbb{E}_B\exp\left\{\theta \int_0^{t}\dot{W}(t-s,B^{x,y}_{0,t}(s))ds\right\}p_{t}(y-x)u_0(dy)\nonumber\\
&\ge \bigg(\max\limits_{|x|\ge R}\int_{\mathbb{R}^d} \mathbb{E}_B\exp\left\{-\frac{\theta q}{p} \int_0^{t}\dot{W}(t-s,B^{x,y}_{0,t}(s))ds\right\}p_{t}(y-x)u_0(dy)\bigg)^{-\frac{p}{q}}\nonumber\\
&\cdot\Big(\max\limits_{|x|\ge R}p_{t}\ast u_0(x)\Big)^{p}\nonumber\\
&\ge \Big(\max\limits_{R\le|x|\le 2^{k_0}R}\log u_{\frac{q\theta}{p}}(t,x)\Big)^{-\frac{p}{q}}\Big(\max\limits_{|x|\ge R}p_{t}\ast u_0(x)\Big)^{p}\label{2020-9-25 18:18:05}
\end{align}
Due to Theorem \eqref{2019-12-7 17:08:43} and \eqref{2020-9-24 15:06:45}, the first term in right side of \eqref{2020-9-25 18:18:05} is ignorable. So,
\begin{align*}
\liminf\limits_{R\rightarrow\infty}\frac{1}{\nu(R)}\log \max\limits_{|x| \ge R}u_\theta(t,x)&\ge \liminf\limits_{p\rightarrow1}\liminf\limits_{R\rightarrow\infty}p
\frac{\max\limits_{|x|\ge R}p_{t}\ast u_0(x)}{\nu(R)}\\
&\ge-1.
\end{align*}

\section{Acknowledgments}
The author would like to thank Professor Xia Chen for the discussion with him during the completion of this paper.

\section*{References}



\bibliographystyle{model4-names}

\bibliography{}

\end{document}